\numberwithin{equation}{section}
\newtheorem{prop}{Proposition}
\newtheorem{lemma}[prop]{Lemma}
\newtheorem{thm}[prop]{Theorem}
\newtheorem{cor}[prop]{Corollary}
\newtheorem{conj}[prop]{Conjecture}
\numberwithin{prop}{section}
\theoremstyle{definition}
\newtheorem{defn}[prop]{Definition}
\newtheorem{rmk}[prop]{Remark}
\newcommand{\del}{\partial}
\newcommand{\dt}{\frac{\partial}{\partial t}}
\newcommand{\brs}[1]{\left| #1 \right|}
\renewcommand{\gg}{\gamma}
\newcommand{\gD}{\Delta}
\newcommand{\gd}{\delta}
\newcommand{\gk}{\kappa}
\newcommand{\gl}{\lambda}
\newcommand{\gw}{\omega}
\newcommand{\ga}{\alpha}
\renewcommand{\ge}{\epsilon}
\newcommand{\N}{\nabla}
\newcommand{\FF}{\mathcal F}
\newcommand{\EE}{\mathcal E}
\newcommand{\til}[1]{\widetilde{#1}}
\newcommand{\nm}[2]{\brs{\brs{ #1}}_{#2}}
\renewcommand{\bar}[1]{\overline{#1}}
\DeclareMathOperator{\Rc}{Rc}
\DeclareMathOperator{\Rm}{Rm}
\DeclareMathOperator{\inj}{inj}
\DeclareMathOperator{\tr}{tr}
\DeclareMathOperator{\grad}{grad}
\DeclareMathOperator{\Vol}{Vol}
\DeclareMathOperator{\diam}{diam}
\DeclareMathOperator{\Area}{Area}
\DeclareMathOperator{\supp}{supp}
\DeclareMathOperator{\Length}{Length}
\begin{document}

\title[Collapsing in the $L^2$ curvature flow]{Collapsing in the $L^2$ curvature
flow}
\author{Jeffrey Streets}
\address{Rowland Hall\\
         University of California\\
         Irvine, CA 92617}
\email{\href{mailto:jstreets@uci.edu}{jstreets@uci.edu}}

\thanks{The author was partly supported by a grant from the National Science
Foundation}

\begin{abstract} We show some results for the $L^2$
curvature flow linked by the theme of addressing collapsing phenomena.
First we show long time existence and convergence of the flow for
$SO(3)$-invariant initial data on $S^3$, as well as a long time
existence and
convergence statement for three-manifolds with initial $L^2$ norm of
curvature chosen small with respect only to the diameter and volume, which are
both necessary dependencies for a result of this kind.  In the critical
dimension
$n = 4$ we show a related low-energy convergence statement with an additional
hypothesis.  Finally we exhibit some finite time singularities in
dimension $n \geq 5$, and show examples of finite time singularities in
dimension $n \geq 6$ which are collapsed on the scale of curvature.
\end{abstract}

\date{\today}

\maketitle

\section{Introduction}

Let $M^n$ be a smooth compact manifold. Consider the
functional of Riemannian metrics
\begin{align} \label{Fdef}
\mathcal F(g) = \int_M \brs{\Rm_g}^2_g dV_g.
\end{align}
This is a natural analogue of the Yang-Mills energy for a Riemannian metric, and
studying its negative gradient flow,
\begin{gather} \label{flow}
\begin{split}
\dt g =&\ - \grad \FF\\
g(0) =&\ g_0,
\end{split}
\end{gather}
is a natural approach to understanding the structure of this functional. For
convenience below, we will call this the \emph{$L^2$ flow}. This is
a fourth-order, degenerate parabolic equation. Papers on this flow and closely
related topics include \cite{Bour}, \cite{SL21}, \cite{Yu}.  Certain
obstructions to the long
time existence of this flow have by now been established. For instance,
curvature blowup at the first singular time was established in \cite{SL2LTB}. As
in the case of Ricci flow, one key difficulty is to understand the possible
collapsing behavior at a singular time. Interestingly, it is a simple matter to
show that finite time singularities of the flow in dimensions $n = 2, 3$
\emph{must} be collapsed (see Proposition \ref{lowdimcollapse} below). The
contrapositive statement of Proposition \ref{lowdimcollapse} is that if one were
able to show a
noncollapsing result analogous to Perelman's estimate for Ricci flow \cite{P1},
one immediately concludes the long time existence of solutions to the $L^2$ flow
in dimensions $n = 2, 3$. Note that such a statement is plausible on PDE
grounds due to the ``supercriticality" of the functional $\mathcal F$ in those
dimensions, though perhaps counterintuitive due to the highly singular nature of
Ricci flow on three-manifolds.

Given the discussion above, let us remark on two cases where the noncollapsing
issue for solutions to the $L^2$ flow is well understood. The first is the case
of
Riemann surfaces.  A
compactness/concentration criterion originally due to Chen \cite{Chen}
states roughly that sequences of conformal metrics on Riemann surfaces either
converge or experience concentration volume and $L^1$ concentration of curvature
at a point. By
fixing a
special gauge to reduce the $L^2$ flow to a conformal flow, and by exploiting
some
energy estimates, we were able to rule
out the concentration behavior at finite time to establish long time existence
of the $L^2$ flow
on compact Riemann surfaces (\cite{SL2Surfaces} Theorem 1).

In higher dimensions the situation is more difficult since the flow cannot be
reduced to a conformal flow, and no convenient compactness criteria are
available to deal with the collapsing issue. One situation where this difficulty
was overcome is related to a certain conformal sphere theorem in four
dimensions. In \cite{CGY}, the authors show that a compact Riemannian
four-manifold with positive Yamabe constant and sufficiently small $L^2$ norm of
Weyl curvature tensor is smoothly deformable to a spherical space form. In
particular, the result yields a classification of the possible diffeotypes
satisfying the hypotheses. Moreover, the pinching condition on the Weyl tensor
is sharp. Recently, we showed a weaker version of this theorem using
the $L^2$ flow. Specifically, we showed that given a compact Riemannian
four-manifold with positive Yamabe constant and sufficiently small $L^2$ norm of
the traceless curvature tensor, the solution to the $L^2$ flow exists for all
time and converges to a spherical space form (\cite{SL23} Theorem 1). Later
this flow result was improved in \cite{Bour}, using a similar method, to yield
an explicit value for the required pinching, though this value is still
suboptimal.

A critical feature of the flow proofs mentioned in the above paragraph is that
the hypotheses of small $L^2$ norm of Weyl tensor and positive Yamabe constant
together imply a bound on the $L^2$ Sobolev constant. This is a beautiful
argument which combines the Gauss-Bonnet theorem and the solution to the Yamabe
problem, and is due independently to Gursky \cite{Gursky} and Tian.  In the 
method of \cite{SL23} and \cite{Bour} the Sobolev
constant bound is used to produce nonflat blowup limits of
finite time singularities, and these limits are automatically critical metrics
for the corresponding functionals.
Then, via a Liouville's Theorem argument one shows that noncompact critical
metrics satisfying certain Sobolev constant and $L^2$ curvature estimates are
automatically flat, implying the singularity could not have occurred, thus
yielding the long time existence of the flow. The argument
of Gursky-Tian perfectly resolves the crucial Sobolev constant issue in the
setting of positive Yamabe constant and pinched curvature. Outside of this
regime however this remains a difficult problem.

The purpose of this paper is to further flesh out and determine the nature of
collapse and
singularity formation in the $L^2$ flow, and moreover to highlight the crucial
role played by the dimension of the manifold in understanding this behavior. 
Our first result is a long-time
existence and convergence result for certain warped product $3$-manifolds.  The
theorem concerns the volume-normalized $L^2$ flow, defined in (\ref{vnflow}).

\begin{thm} \label{mainthm3flds} Let $(M^3, g)$ satisfy one of the following
conditions:
\begin{enumerate}
\item{$M \cong S^3$ and $g$ is an $SO(3)$-invariant metric.}
\item{$M \cong \Sigma \times S^1$, where $\Sigma$ is a compact Riemann surface,
$\chi(\Sigma) \neq 0$,
and $g = ds^2 + \psi^2 g_{\Sigma}$, where $g_{\Sigma}$ is a constant
curvature metric on $\Sigma$, and $\psi \in C^{\infty}(S^1, \mathbb R_{>0})$.}
\end{enumerate}
The solution to the volume-normalized $L^2$ flow with initial condition $g$
exists on $[0, \infty)$, and in the case of $SO(3)$-invariant metrics on $S^3$
converges to a critical metric.
\end{thm}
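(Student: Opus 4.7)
The plan is to exploit the symmetry of the initial data to reduce the flow to a fourth-order parabolic equation in one spatial variable, and then invoke the contrapositive of Proposition \ref{lowdimcollapse} to convert long-time existence into a non-collapsing statement. In case (1), an $SO(3)$-invariant metric on $S^3$ has the form $dr^2 + f(r)^2 g_{S^2}$ for $r \in [0,L]$, with $f$ satisfying $f(0)=f(L)=0$ and the usual smoothness conditions at the poles. In case (2), the warped product structure $ds^2 + \psi(s)^2 g_\Sigma$ is preserved verbatim. Because the $L^2$ gradient flow and the volume-normalization term are both equivariant under isometries, these ans\"atze persist, yielding a fourth-order parabolic PDE for $f(r,t)$ or $\psi(s,t)$, and short-time existence reduces to standard parabolic theory for a single unknown.

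The core of the argument is to rule out interior collapse of the warping function. I would compute the curvature tensor of the warped product explicitly: the fiber sectional curvatures take the schematic form $K_{\mathrm{fiber}}/\psi^{2} - (\psi'/\psi)^{2}$ (and analogously for $f$), where $K_{\mathrm{fiber}}$ is the constant curvature of the base fiber. Expanding $\brs{\Rm}^{2}dV$ with the correct powers of $\psi$ from the volume form, the leading term near a zero of the warping function is of order $K_{\mathrm{fiber}}^{2}\psi^{-2}$. In case (2), $\chi(\Sigma)\neq 0$ together with Gauss-Bonnet forces $K_{\mathrm{fiber}}\neq 0$, so that if $\psi$ were to approach zero at an interior point, $\mathcal F$ would blow up, contradicting its monotonicity under the flow (accounting for the bounded scaling term introduced by the volume normalization). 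This yields a uniform positive lower bound on $\psi$. Case (1) is analogous but more delicate since $f$ is forced to vanish at the poles; one must show that interior minima of $f$ do not drift to zero, by combining a curvature-blowup estimate with a maximum-principle analysis of the reduced PDE, and by verifying that the smoothness-at-the-poles boundary behavior is preserved.

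With the non-collapsing bound in hand, the hypotheses of Proposition \ref{lowdimcollapse} fail at any putative singular time, so standard smoothing estimates for the $L^2$ flow (together with the curvature blow-up criterion from \cite{SL2LTB}) upgrade to uniform $C^{k}$ bounds in the symmetric setting, giving long-time existence. For convergence in case (1), monotonicity yields $\int_0^{\infty}\nm{\grad\mathcal F}{}^{2}\,dt<\infty$, producing a smooth subsequential limit which is a critical metric for $\mathcal F$ subject to the volume constraint. I would upgrade subsequential convergence to full convergence via a \L{}ojasiewicz-Simon inequality, which applies because $\mathcal F$ is real-analytic and, in the cohomogeneity-one setting, the critical set is tractable.

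The main obstacle will be the non-collapsing lower bound on the warping function, especially near the fixed points of the $SO(3)$-action in case (1). The supercriticality of $\mathcal F$ in dimension three precludes naive small-energy methods, so what carries the argument is the geometric fact that collapse forces curvature to blow up faster than $L^{2}$-integrability allows---enforced by the topological hypothesis $\chi(\Sigma)\neq 0$ in case (2) and by the round $S^{2}$ fibers in case (1).
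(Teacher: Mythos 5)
Your outline captures the right spirit---symmetry reduction, then rule out fiber collapse using $\chi(\Sigma)\neq0$---but the mechanism you propose is different from the paper's, and as written it has genuine gaps.

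First, your central claim that $\psi\to 0$ forces $\mathcal F\to\infty$ is not established merely by observing the pointwise $K_\Sigma^2/\psi^{-2}$ blowup: the region where $\psi$ is small could in principle shrink fast enough to keep $\int(K_\Sigma-\psi_s^2)^2/\psi^2\,dw$ bounded. One can show that this is incompatible with keeping $\int\psi_{ss}^2\,dw$ bounded, but that competition has to be worked out explicitly, and you do not carry it out. More importantly, even granting a uniform lower bound on $\psi$ for the unrescaled flow, this does \emph{not} imply the noncollapsing needed to invoke the contrapositive of Proposition \ref{lowdimcollapse}: you also need a lower bound on the lateral length $L$. With the volume normalized and $\psi$ only bounded \emph{below}, nothing prevents $L\to 0$ with $\sup\psi\to\infty$, and in that regime the injectivity radius collapses along the $S^1$ direction. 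The paper's Proposition \ref{noncollapseprop} explicitly requires a lower bound on $L$, and supplying that bound for the rescaled metrics is the crux of the argument.

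What the paper actually does is quite different and worth internalizing: it rescales at the curvature scale $\lambda_i=\brs{\Rm}(x_i,t_i)$ so that $\brs{\Rm}\leq 1$, then (i) shows the rescaled $\psi$ is bounded below using $\chi(\Sigma)\neq0$ directly at the minimum of $\psi$ (Proposition \ref{noncollapseprop}), and (ii) rules out $L_i\to 0$ by combining the $C^1$ smoothing estimate on curvature with the key dimensional fact that $\mathcal F(\lambda_i g)=\lambda_i^{n/2-2}\mathcal F(g)\to 0$ for $n=3$. If $L_i\to 0$, the gradient estimate forces $\brs{\Rm}\geq 1/2$ everywhere, hence $\mathcal F(g_i)\gtrsim\Vol(g_i)\to\infty$, contradicting the scaling decay. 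This scaling decay is also what kills the final blowup limit via Fatou. Your proposal does not use this supercritical scaling, which is the engine of the proof. Finally, your treatment of the $SO(3)$-invariant case is essentially deferred to a maximum-principle argument you do not give; the paper's Lemma \ref{noncollapsesphere} handles it by an explicit ODE comparison near the poles using the boundary conditions $\psi_s\to\mp 1$. For convergence, the paper uses the nonsingular-flow trichotomy of Theorem \ref{nonsingularthm} to get subsequential convergence, which is all the theorem asserts, rather than a \L{}ojasiewicz--Simon argument.
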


\begin{rmk} We note that the case $\chi(\Sigma) = 0$ is excluded because in the
course of the proof we need to ensure that if the area of one of the $\Sigma$
fibers approaches zero along the flow, its curvature goes to infinity.
\end{rmk}

\begin{rmk} 
The long time existence of $SO(3)$-invariant metrics on $S^3$ is surprising when
taken in contrast to the behavior of Ricci flow solutions in this context.
Indeed, as shown in \cite{Knopf}, neckpinches occur for metrics with such
symmetry. Moreover, as established in the work of Perelman \cite{P1}, \cite{P2},
neckpinches (degenerate and nondegenerate) are the only finite
time local singularities for Ricci flow on three-manifolds. From a PDE
perspective
though, this long time existence is not so surprising, since the functional
$\mathcal F$ is ``supercritical" in the scaling sense for $n \leq 3$, and hence
one expects very good
existence properties in these dimensions. Indeed, from this perspective it is
not so unreasonable to think that solutions to the $L^2$ flow on three-manifolds
always exist for all time (see Conjecture \ref{mainconj}). The fact that Theorem
\ref{mainthm3flds} applies to metrics
with arbitrary initial energy is encouraging in this respect.
\end{rmk}

\begin{rmk} It is reasonable to conjecture that the long time existence
statements of Theorem
\ref{mainthm3flds} hold for the gradient flows of more general
functionals, specifically quadratic Riemannian functionals satisfying
as $\mathcal F \geq \gd \int_M \brs{\Rm}^2$.
\end{rmk}

\noindent The next main result is a low energy convergence statement on
three-manifolds.

\begin{thm} \label{threelow} Given $V > 0, D > 0$, there exists $\ge > 0$ so
that
if $(M^3, g)$ is a compact Riemannian manifold satisfying
\begin{gather} \label{threelow10}
\begin{split}
\Vol(g) \geq&\ V,\\
\diam(g) \leq&\ D,\\
\mathcal F(g) \leq&\ \ge,
\end{split}
\end{gather}
the solution to the volume-normalized $L^2$ flow exists for all time and
converges to a flat metric.
\end{thm}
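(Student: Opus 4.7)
The plan is to show that the three geometric hypotheses on $g_0$ are essentially preserved along the volume-normalized flow, so that the flow enjoys uniform a priori bounds for all time, and then to extract a flat subsequential limit. Concretely I would run a bootstrap on the maximal time interval $[0,T)$ on which $\Vol(g(t)) \geq V/2$, $\diam(g(t)) \leq 2D$, and $\FF(g(t)) \leq 2\epsilon$, using these bounds to upgrade to $C^\infty$ control of the curvature, and then returning to improve each of the three quantities. Volume is automatic from the normalization. For $\FF$, the basic monotonicity $\frac{d}{dt}\FF = -\|\grad\FF\|_{L^2}^2$ for the unnormalized flow together with the fact that the volume-normalization correction (a multiple of $g$ proportional to $\FF/\Vol$) contributes only $O(\epsilon\cdot\FF)$ to the energy derivative shows that $\FF$ is nearly monotone and in particular stays below $2\epsilon$ on $[0,T)$.

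The diameter bound is the serious technical step. I would estimate metric distortion via $L^2$-in-space, integrated-in-time estimates on $\dt g = -\grad\FF + \lambda g$, interpolating against pointwise curvature bounds supplied by the $\epsilon$-regularity step below, to prevent $\diam(g(t))$ from doubling before the bootstrap can close. With all three bounds in force on $[0,T)$, a Croke- or Gallot-type isoperimetric inequality yields a uniform $L^2$ Sobolev constant for $g(t)$. An $\epsilon$-regularity argument---of the type developed in \cite{SL2LTB} for the $L^2$ flow and in spirit analogous to the Yang--Mills small-energy estimates---then produces instantaneous smoothing of the form $|\N^k\Rm|(x,t) \leq C_k t^{-(k+2)/4}$ for small $t$ and $C^\infty$ bounds on $\Rm$ at all later times. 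These bounds prevent any of the three a priori quantities from degenerating, so the bootstrap closes and $T=\infty$.

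For convergence, the integrated monotonicity $\int_0^\infty \|\grad\FF\|_{L^2}^2\,dt < \infty$ combined with the uniform $C^\infty$ bounds lets one extract a sequence $t_i \to \infty$ along which $(M,g(t_i))$ converges in $C^\infty$ to a critical metric $g_\infty$ with $\FF(g_\infty) \leq 2\epsilon$, $\Vol(g_\infty) \geq V/2$, and $\diam(g_\infty) \leq 2D$. Testing the Euler--Lagrange equation for $\FF$ against $\Rc$ and using Sobolev together with H\"older to absorb the cubic term $\int|\Rm|^3$ shows that such a critical metric of sufficiently small $L^2$ curvature must satisfy $\N\Rm \equiv 0$; in dimension three a parallel curvature tensor with small $\int|\Rm|^2$ on a manifold of bounded volume is identically zero, so $g_\infty$ is flat. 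A Lojasiewicz--Simon argument around the (integrable) flat critical point then upgrades subsequential convergence to full convergence.

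The main obstacle I expect to fight is the diameter bound. The fourth-order, degenerate parabolic character of the flow prevents a direct pointwise control of $|\dt g|$, so the diameter can only be bounded through carefully integrated-in-time estimates interlocked with the small-energy hypothesis and the $\epsilon$-regularity smoothing; arranging these so that the bootstrap actually closes---rather than merely recording that the bounds are stable on some possibly-short interval---is the heart of the argument.
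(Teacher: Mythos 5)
Your overall architecture — bootstrap on a short-time interval, $\epsilon$-regularity smoothing, then stability near flat metrics — matches the paper's in broad outline, and you correctly identify the diameter bound as the crux. But the mechanism you propose for closing that bound does not work, and the actual mechanism in the paper is the main new idea of the whole argument, so this is a genuine gap rather than a cosmetic difference.

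You propose to control metric distortion by integrating $\dt g = -\grad\FF + \lambda g$ in time, ``interpolating against pointwise curvature bounds'' from $\epsilon$-regularity. The difficulty is that the smoothing rate $|\N^k \Rm| \lesssim t^{-(k+2)/4}$ gives $|\grad\FF| \sim |\N^2\Rm| + |\Rm|^2 \lesssim t^{-1}$, which is not integrable at $t=0$; so $\int_0^T |\dt g|\,dt$ diverges and there is no a priori bound on metric distortion over $[0,T]$ from this route. The paper does carry out an integrated-in-time estimate of exactly this type, but only in the proof of Theorem \ref{fourlow1}, and there it is enabled by the extra hypothesis $\nm{\grad\FF}{L^2}\leq A$ on the initial metric (via Proposition \ref{Egrowthprop} and the interpolation inequality of Theorem \ref{multsob}), which Theorem \ref{threelow} deliberately does \emph{not} assume. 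Absent that hypothesis the integral simply does not close, which is precisely the obstruction the paper is highlighting.

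What the paper does instead is bound the diameter indirectly through the first Laplace eigenvalue. If $\diam(g_{t_i})$ has doubled, two disjoint balls of comparable volume (with volume comparability coming from Petersen–Wei volume comparison under the small $L^2$ curvature) furnish a test function showing $\gl(g_{t_i}) \lesssim (KD)^{-2}$, i.e.\ small. The key technical result, Theorem \ref{glestimate}, then propagates this backward in time: one runs a backwards bi-Laplacian heat flow for a test function (the analogue, in this fourth-order setting, of Perelman's conjugate heat equation trick), carefully controlling the growth of the Dirichlet quotient $\EE$ using the Sobolev constant bound and the smallness of $\int_0^T \nm{E}{L^2}^2 = \FF(g_0)-\FF(g_T) \leq \ge$. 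This forces $\gl(g_0)$ to also be small, contradicting the lower bound on $\gl(g_0)$ from Gallot's isoperimetric estimate and Cheeger's inequality. This eigenvalue mechanism is what you are missing: it is precisely what allows one to dispense with any direct control on $|\dt g|$ or on $\nm{\grad\FF}{L^2}(g_0)$.

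A smaller point: for the endgame the paper does not pass to a sequence $t_i \to \infty$ and invoke a Lojasiewicz--Simon argument around an abstractly identified flat critical point. Instead, after the uniform short-time interval $[0,T]$, the smoothing estimates plus injectivity radius and diameter control force $g_T$ to be $C^k$-close to a flat metric, and one then applies the existing stability theorem (\cite{SL22} Theorem 1.6) to get global existence and exponential convergence from there. Your Lojasiewicz--Simon route is plausible in spirit but requires establishing the gradient inequality for $\til{\FF}$ near the (non-isolated, but integrable) flat critical set, which is extra work the paper avoids by citing the previously proved stability result.
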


\begin{rmk} Neither the hypothesis of a lower bound on volume nor the hypothesis
of the
initial bound on diameter can be removed while keeping convergence to a flat
metric.  In particular, on $S^2 \times S^1$, the
metric $A g_{S^2} \oplus
\frac{1}{A^2} g_{S^1}$ has unit volume and $\diam = O( A^{\frac{1}{2}})$ and
$\mathcal F(g) = O( A^{-2})$ for $A$ large.  Since the universal cover of the
manifold is $S^2 \times \mathbb R$, it cannot admit a flat metric.  A
direct calculation shows that the solution to the $L^2$ flow with this initial
condition satisfies $A \to \infty$ as $ \to \infty$.  Likewise, the metrics
$g_{\ge} = g_{S^2} \oplus \ge^2 g_{S^1}$ have bounded diameter and $\mathcal
F(g_{\ge}) = O(\ge)$, $\Vol(g_{\ge}) = O(\ge)$.
\end{rmk}

The key estimate intervening in Theorem \ref{threelow}
controls the growth
of the first Laplace eigenvalue in the presence of a Sobolev constant bound
and small global energy. This estimate
bears a structural similarity to Perelman's $\gk$-noncollapsing estimate for
Ricci flow. Very roughly speaking, Perelman observes that if a solution to the
Ricci flow becomes sufficiently collapsed, there are test functions forcing his
quantity $\mu$ to approach $-\infty$. Since $\mu$ is monotonically increasing
along the flow, one thereby derives a contradiction. Our approach is similar in
that we control the Dirichlet energy $\EE(g, \phi)$ of a test function $\phi$
along the flow.  If
at 
a certain time $T$ the first Laplace eigenvalue is very small, one has a test
function $\phi$
for $\EE$
 which yields a very small, positive value. Taking a cue from
Perelman's conjugate heat equation, we push this function back to the initial
metric by means of the bi-Laplacian heat flow of the time varying metric and
derive a test function $\phi$ such that $\EE(g_0, \phi)$ is again a very small,
positive value. Considering the initial first Laplace eigenvalue as known, we
thus
derive an estimate for how fast it can decay along the flow. We emphasize that
the proofs really are only similar in their general outline. The energy $\EE$
is 
\emph{not} monotonic along solutions to the $L^2$ flow, and the main
difficulty is in controlling $\EE$ along the flow. Moreover, we
emphasize that we do \emph{not} show a $\gk$-noncollapsing estimate for
solutions to the $L^2$ flow akin to Perelman's estimate for Ricci flow. As
already mentioned, such an estimate immediately implies long time existence of
the flow on surfaces and three-manifolds (see Proposition \ref{lowdimcollapse}).

By adding an extra hypothesis, we obtain a low energy convergence statement on
four-manifolds as well.

\begin{thm} \label{fourlow1} Given constants $A, B > 0$ there exists $\ge(A, B)
> 0$ so that if
$(M^4, g)$ is a compact Riemannian manifold with unit volume satisfying
\begin{itemize}
\item{ $\nm{\grad \FF}{L^2} \leq A$}
\item{ $C_S(g) \leq B$, where $C_S(g)$ denotes the $L^2$ Sobolev constant of
$g$.}
\item{ $\int_M \brs{\Rm}^2 \leq \ge$}
\end{itemize}
then the solution to the $L^2$ flow with initial condition $g$ exists for all
time
and converges to a flat metric.
\end{thm}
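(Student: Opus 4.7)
The plan is to run a continuity argument controlling simultaneously the global energy $\FF$, the Sobolev constant, and (after a short transient) pointwise curvature bounds along the flow. The overall architecture parallels that of \cite{SL23} and \cite{Bour}: in those results a positive Yamabe constant together with small $L^2$ Weyl energy supplies a Sobolev bound via Gursky--Tian; here, by hypothesis, the Sobolev bound is assumed outright, and the extra control $\nm{\grad \FF}{L^2}\leq A$ plays the role of a quantitative initial regularity that replaces the pinching.

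First I would establish short-time existence and then set
\[
T^{*} = \sup\Bigl\{t \in [0,T) : \FF(g(s))\leq 2\ge,\ C_S(g(s)) \leq 2B \text{ for all } s\in[0,t]\Bigr\}.
\]
On $[0,T^{*})$ the monotonicity $\tfrac{d}{dt}\FF = -\nm{\grad \FF}{L^2}^2$ already gives $\FF(g(t))\leq \FF(g_0)\leq \ge$ automatically, so the actual work is to propagate the Sobolev bound. With $C_S(g(t))\leq 2B$ in hand I would invoke an $\ge$-regularity statement of the kind developed in \cite{SL23, SL2LTB}: the bound on $C_S$ together with smallness of $\int_{B_r}\brs{\Rm}^2$ yields interior pointwise bounds $\brs{\Rm}\leq C r^{-2}$, which upgrade via standard Shi-type bootstrap to uniform bounds on $\brs{\N^k \Rm}$ on $[\gd, T^{*})$ for some small $\gd = \gd(A)$. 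The role of the $\grad \FF$ hypothesis is to furnish an effective bound on this initial transient; in particular, combined with the $L^2$-in-time integrability $\int_0^\infty\nm{\grad \FF}{L^2}^2\,dt \leq \ge$ coming from monotonicity, it controls the total $L^1$-in-time $L^2$ motion of the metric.

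Next I would close the continuity method by showing $C_S(g(t)) \leq \tfrac{3}{2}B$ strictly on $[0,T^{*})$. Since $\del_t g = -\grad \FF$, the $L^1_t L^2_x$ bound on the velocity together with the $L^\infty$ curvature bounds allows me to estimate $\nm{g(t)-g_0}{C^0}$ by a quantity tending to zero with $\ge$. Because the Sobolev constant is stable under small $C^0$ perturbations of the metric, this gives the required strict improvement, forcing $T^{*} = T$ and, via the curvature bounds, ruling out finite-time singularities, so that the flow exists for all time.

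For the convergence assertion, the time-integrated dissipation produces a sequence $t_k\to\infty$ along which $\nm{\grad \FF}{L^2}(t_k)\to 0$, and the uniform $C^k$ bounds on curvature let me extract a smooth subsequential limit $g_\infty$ which is a critical metric of $\FF$ satisfying $\int_M\brs{\Rm_{g_\infty}}^2\leq \ge$ and $C_S(g_\infty)\leq 2B$. A Liouville-type argument of the form used in \cite{SL23} shows such a critical metric must be flat. To promote subsequential to full convergence I would apply a \L{}ojasiewicz--Simon inequality for the analytic functional $\FF$ at the flat critical metric $g_\infty$, after standard diffeomorphism gauge-fixing. The step I expect to be the main obstacle is the stability of the Sobolev constant along the flow, since in the critical dimension $n=4$ the functional $\FF$ is scale invariant and there is no a priori obstruction to collapse at unit energy scale; it is exactly here that the hypothesis $\nm{\grad \FF}{L^2}\leq A$ is expected to play its decisive quantitative role.
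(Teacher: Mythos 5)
Your overall architecture is close to the paper's: both run a continuity/bootstrap argument propagating a Sobolev constant bound and curvature decay, and both ultimately fall back on a stability result near flat metrics. But the central technical step is where you gloss over the hard part, and there is a genuine gap. To keep the Sobolev constant under control you want $\nm{g(t)-g_0}{C^0}$ small, and you propose to extract this from the $L^1_t L^2_x$ bound on the velocity $\grad\FF$ together with $L^\infty$ curvature bounds. That does not close: an $L^2_x$ bound on the velocity controls only the $L^2$ metric variation, and the $L^\infty$ curvature bound does not by itself upgrade this to a pointwise bound on $g(t)-g_0$. The paper instead (a) uses Proposition \ref{Egrowthprop} to propagate the \emph{instantaneous} bound $\nm{\grad\FF}{L^2}\lesssim A$ forward in time (this bound is not automatic — the flow is a gradient flow for $\FF$, not for $\nm{\grad\FF}{L^2}^2$), and (b) feeds that together with the smoothing estimates of Theorem \ref{globalsmoothing} into the multiplicative Sobolev inequality (Theorem \ref{multsob}) to bound $\int_0^T\nm{\grad\FF}{L^\infty}\,dt$ by $C B^{3/8}A^{1/4}T^{1/16}$, which gives the $C^0$ metric control and hence Sobolev constant stability. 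You have identified the right difficulty, but without step (a) and the interpolation in (b) the argument as written does not produce the $L^\infty$-in-space control you need; merely invoking the time-integrated $L^2$ dissipation is insufficient.

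The convergence step also differs from the paper. You propose a Liouville argument plus a \L{}ojasiewicz--Simon inequality; the paper instead uses Hamilton compactness to show the short-time-regularized metrics subconverge in $C^k$ to a flat metric, and then directly invokes the existing exponential stability theorem (\cite{SL22} Theorem 1.6) near flat metrics. Your route is plausible in outline but would require extra work to justify the \L{}ojasiewicz--Simon inequality at a flat (non-isolated, gauge-degenerate) critical metric, whereas the paper's invocation is a ready-made package. This is a difference of method rather than a gap, but the missing Egrowthprop-plus-multiplicative-Sobolev step above is a real hole in the proposal.
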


\begin{rmk} It follows from Theorem \ref{fourlow1} that the difficulty in
proving a version of Theorem \ref{threelow} for four-manifolds lies entirely in
understanding the short-time behavior of the Sobolev constant.  In particular,
if one could obtain a
doubling-time estimate for the Sobolev
constant along the $L^2$ flow, it would follow that for some time along the flow
the metric satisfies a bound on the Sobolev constant and the $L^2$ norm of
$\grad \FF$.  The analogue of Theorem
\ref{threelow} for four-manifolds would then follow, i.e. given energy
sufficiently small with respect to the Sobolev constant, the solution to the
$L^2$ flow with this initial condition would exist for all time and converge to
a flat metric.
\end{rmk}

Moving to higher dimensions, we show in \S \ref{highdim} that in all
dimensions $n \geq 5$, the $L^2$ flow exhibits finite time singularities, and in
dimensions $n \geq 6$, the $L^2$ flow exhibits finite time singularities which
do
not satisfy an injectivity radius estimate on the scale of curvature.  Also note
that we know from Proposition \ref{lowdimcollapse} that finite time
singularities in dimension $n = 2,3$ \emph{must} be collapsed on the scale of
maximum curvature.  On the other hand, as discussed above, it is not
unreasonable to expect that there are \emph{no} finite time singularities of the
$L^2$ flow in dimensions $n = 2,3$.
Thus one way to interpret Proposition \ref{lowdimcollapse} is that if one could
exhibit a no-local-collapsing result for the $L^2$ flow akin to Perelman's
result for Ricci flow, the general long time existence in dimensions $n = 2,3$
would immediately follow.  What the higher dimensional examples show is that
such a general noncollapsing result can only hold in dimension $n \leq 5$.

Here is an outline of the rest of the paper. In $\S$ \ref{sobbckgrnd} we give
some background on Sobolev and isoperimetric constants, and in $\S$
\ref{flowbckgrnd}
we give background results on the $L^2$ flow.  Section \ref{threesym} has the
proof of Theorem \ref{mainthm3flds}.  In \S \ref{dirichletgrowth} we derive an
estimate for the growth of the first Laplace eigenvalue along solutions to the
$L^2$ flow, and we use this in \S \ref{threelowsec} to prove Theorem
\ref{threelow}.  Section
\ref{fourmnflds} contains the proof of Theorem \ref{fourlow1}.  In \S
\ref{highdim} we address the
behavior of solutions to the $L^2$ flow in dimensions $n \geq 5$.  We end in
\S \ref{conjsec} with a conjectural discussion of the optimal long time
existence results for the $L^2$ flow and their possible applications.

\textbf{Acknowledgments:} The author would like to thank Aaron Naber, Peter
Petersen, and Laurent Saloff-Coste for several helpful discussions.

\section{Isoperimetric and Sobolev Constants} \label{sobbckgrnd}

In this section we recall some definitions and theorems related to isoperimetric
and Sobolev constants on compact Riemannian manifolds.

\begin{defn} Let $(M^n, g)$ be a compact Riemannian manifold, and let $\Omega$
denote a proper open subset of $M$  The \emph{isoperimetric constant} is
\begin{align*}
 C_I(M, g) := \inf_{\Omega} \frac{\Area(\del \Omega)}{\min \{ \Vol(\Omega),
\Vol(M \backslash \Omega) \}^{\frac{n-1}{n}}}.
\end{align*}
\end{defn}

\begin{defn} Let $(M^n, g)$ be a compact Riemannian manifold.  The \emph{$L^1$
Sobolev constant} of $M$ is the infimum of all $C$
such that for any $f \in C^1(M)$,
\begin{align*}
\inf_{\ga \in \mathbb R} \left( \int_M \brs{f - \ga}^{\frac{n}{n-1}} dV
\right)^{\frac{n-1}{n}} \leq C \int_M \brs{\N f} dV.
\end{align*}
\end{defn}

\begin{defn} \label{Sobconstdef} Let $(M^n, g)$ be a compact Riemannian
manifold, $n \geq 3$.
The
\emph{$L^2$ Sobolev constant}, denoted $C_S(g)$, is the infimum
of all $C$ such that for any $f \in C^1(M)$,
\begin{align*}
\left( \int_M f^{\frac{2n}{n-2}} dV \right)^{\frac{n-2}{n}} \leq C \left(\int_M
\brs{\N f}^2 dV + V^{-\frac{2}{n}} \int_M f^2 dV \right).
\end{align*}
\end{defn}

\begin{rmk} \label{isoequiv} The $L^1$ Sobolev inequality is equivalent to the
isoperimetric
constant by \cite{Bombieri} (see also \cite{Chavel}).  Furthermore, by
(\cite{Li} Lemma 2) an upper bound on the $L^1$ Sobolev constant implies an
upper
bound for the $L^2$ Sobolev constant.
\end{rmk}

\begin{thm} \label{multsob} Let $(M^n, g)$ be a complete Riemannian manifold, $n
\geq 3$. Given $q \geq 2$, there exists $C(q)$ such that for all $u \in
H_1^q(M)$,
\begin{align*}
\nm{u}{L^p} \leq C C_S^{\tfrac{\ga}{2}} \nm{u}{L^m}^{1-\ga} \left( \nm{\N
u}{L^q} +
\nm{u}{L^q} \right)^{\ga},
\end{align*}
where $2 \leq m \leq p$, 
\begin{align*}
\ga = \frac{ \frac{1}{m} - \frac{1}{p}}{\frac{1}{m} - \frac{1}{q} +
\frac{1}{n}},
\end{align*}
and
\begin{align*}
\begin{cases}
\mbox{ if $q < n$, then $p \leq \frac{nq}{n-q}$ and $C = C(n, q)$},\\
\mbox{ if $q = n$, then $p < \infty$ and $C = C(m, p)$},\\
\mbox{ if $q > n$, then $p \leq \infty$ and $C = C(n, m, q)$.}
\end{cases}
\end{align*}
\begin{proof} See \cite{Lady} or \cite{Bour} for a more recent exposition.
\end{proof}
\end{thm}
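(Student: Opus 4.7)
The plan is to reduce Theorem \ref{multsob} to the $L^2$ Sobolev inequality of Definition \ref{Sobconstdef} via the standard substitution $v = |u|^\beta$ combined with H\"older's inequality, treating the three cases on $q$ in turn.

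First, when $q = 2$ the estimate is a direct interpolation between $L^m$ and the Sobolev target $L^{2^*}$, where $2^* = 2n/(n-2)$. Definition \ref{Sobconstdef} yields
\begin{align*}
\nm{u}{L^{2^*}} \leq C_S^{1/2}\left(\nm{\N u}{L^2} + V^{-1/n}\nm{u}{L^2}\right),
\end{align*}
and H\"older's inequality with $\tfrac{1}{p} = \tfrac{1-\alpha}{m} + \tfrac{\alpha}{2^*}$ gives $\nm{u}{L^p} \leq \nm{u}{L^m}^{1-\alpha}\nm{u}{L^{2^*}}^{\alpha}$. Since $\tfrac{1}{m} - \tfrac{1}{2^*} = \tfrac{1}{m} - \tfrac{1}{2} + \tfrac{1}{n}$, the claimed formula for $\alpha$ specializes correctly, and combining the two displays yields the theorem in this case.

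Next, for $2 < q < n$ I would apply the $L^2$ Sobolev inequality not to $u$ but to $v = |u|^\beta$ for a suitably chosen $\beta > 1$. The chain rule gives $|\N v| = \beta |u|^{\beta-1}|\N u|$, and H\"older with exponents $\tfrac{q}{2}$ and $\tfrac{q}{q-2}$ bounds $\nm{\N v}{L^2}$ by $\beta \nm{u}{L^{s}}^{\beta-1}\nm{\N u}{L^q}$, where $s = 2q(\beta-1)/(q-2)$. Choosing $\beta$ so that $\beta \cdot 2^* = p$ and then interpolating the auxiliary $L^s$ factor between $L^m$ and $L^p$ (absorbing the $L^p$ piece on the left, which is legitimate once one checks $\alpha < 1$) produces the stated multiplicative inequality. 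The $V^{-1/n}\nm{v}{L^2}$ term of the Sobolev inequality contributes the lower-order $\nm{u}{L^q}$ part on the right. The case $q = n$ is handled by replacing $L^{2^*}$ with $L^{p'}$ for arbitrary $p' < \infty$ obtained by Moser iteration of the $L^2$ Sobolev inequality, and the case $q > n$ by the Morrey embedding $H_1^q \hookrightarrow L^\infty$; in both subcases the same H\"older interpolation against $L^m$ completes the argument.

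The main obstacle is exponent bookkeeping: one must choose $\beta$ and the auxiliary H\"older exponents so that the final inequality reads $\nm{u}{L^m}^{1-\alpha}(\nm{\N u}{L^q} + \nm{u}{L^q})^{\alpha}$ with \emph{exactly} the prescribed $\alpha$, and carefully track the $V^{-1/n}$ normalization to confirm the constant appears as $C_S^{\alpha/2}$ rather than a larger power. This algebra is elementary but delicate, and the dependence structure of the constant $C$ on the various parameters is precisely dictated by which endpoint Sobolev or Morrey estimate is invoked, which is why the theorem is stated with three case distinctions. Presumably this is what motivates the author to defer to \cite{Lady} and \cite{Bour} for the full verification.
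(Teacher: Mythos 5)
The paper's own ``proof'' is only a citation to \cite{Lady} and \cite{Bour}, so there is no in-house argument to match your sketch against; your $v = |u|^\beta$ substitution with H\"older is indeed the classical Ladyzhenskaya--Uraltzeva route. For $2 < q < n$ the bookkeeping you flag does close: with $\beta = p/2^*$ and $1/s = (q-2)/(2q(\beta-1))$ one computes $\beta - \theta(\beta-1) = \frac{1/m - 1/q + 1/n}{1/m - 1/p} = \alpha^{-1}$, so raising the absorbed inequality to the power $\alpha$ produces exactly the claimed $C_S^{\alpha/2}$ and $\|u\|_{L^m}^{1-\alpha}$ factors, and the requirement $s \leq p$ for the interpolation of the auxiliary $L^s$ factor turns out to be algebraically equivalent to $p \leq nq/(n-q)$, which is reassuring. (The condition $s \geq m$ is not automatic for $p$ close to $m$, but this is a minor issue that costs only a volume factor via H\"older.)

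The genuine gap is in the case $q > n$ at the endpoint $p = \infty$, which is precisely the case the paper invokes in the proof of Theorem \ref{fourlow1} (with $m=2$, $q=6$, $n=4$, $p=\infty$). Morrey's embedding gives $\nm{u}{L^\infty} \leq C\left(\nm{\N u}{L^q} + \nm{u}{L^q}\right)$, i.e.\ the exponent $1$ on the $W^{1,q}$ norm, but the theorem asserts the strictly smaller exponent
\begin{align*}
\alpha_\infty = \frac{1/m}{1/m - 1/q + 1/n} < 1 \quad \text{for } q > n,
\end{align*}
with a compensating factor $\nm{u}{L^m}^{1-\alpha_\infty}$. A H\"older interpolation of $L^p$ between $L^m$ and $L^\infty$ also does not produce this $\alpha$ (it gives $1 - m/p$). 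To reach the multiplicative form at $p=\infty$ you cannot simply let $p\to\infty$ in the $v = |u|^\beta$ argument either, since there $\beta = p/2^* \to \infty$ and the H\"older constant $\beta^\alpha$ blows up. One needs either a Moser-type iteration that telescopes the constants correctly, or a potential/Riesz-representation estimate optimized over a local scale, both of which are nontrivial refinements beyond ``Morrey plus H\"older.'' Your sketch correctly identifies the right technique for $q \leq n$ but undersells the work required in the supercritical case actually used in this paper.
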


\begin{lemma} \label{diambnd} Given $A > 0$ there exists $D = D(A) > 0$ so that
if $(M^n, g)$ is a compact Riemannian manifold with $\Vol(g) = 1$ and $C_S(g) <
A$,
then $\diam(g) < D$.
\begin{proof} It follows from the argument of (\cite{Hebey} Lemma 3.2) that for
$(M^n, g)$ satisfying the hypotheses, for any $x \in M$ one has
\begin{align} \label{volbnd}
\Vol(B_x(1)) \geq \frac{C}{A^{\frac{n}{2}}}
\end{align}
for a universal constant $C$. The result follows from a standard packing
argument.
\end{proof}
\end{lemma}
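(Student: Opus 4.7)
The plan is to execute the two-step strategy indicated in the proof hint: first establish a uniform lower bound on the volume of unit balls, then convert this into a diameter bound by a packing argument.

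For the volume lower bound, I would apply the $L^2$ Sobolev inequality from Definition \ref{Sobconstdef} to a cutoff function $\eta$ with $\eta \equiv 1$ on $B_x(r/2)$, $\supp \eta \subset B_x(r)$, and $\brs{\N \eta} \leq 4/r$. Using $\Vol(g) = 1$ and $C_S(g) < A$ this produces a dyadic doubling estimate of the form
\[
\Vol(B_x(r/2))^{(n-2)/n} \leq A\Bigl(\tfrac{16}{r^2} + 1\Bigr)\Vol(B_x(r))
\]
for $0 < r \leq 1$. Iterating this estimate from $r = 1$ down to arbitrarily small radii, and comparing with the Euclidean-like volume growth $\Vol(B_x(r)) \sim \gw_n r^n$ at small scales (or, equivalently, running a Moser/Nash-type iteration to derive a contradiction from a small $\Vol(B_x(1))$), yields the uniform estimate (\ref{volbnd}), namely $\Vol(B_x(1)) \geq C/A^{n/2}$ with $C$ depending only on $n$. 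This is exactly what is carried out in \cite{Hebey} Lemma 3.2.

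For the packing step, let $D = \diam(g)$ and let $\gg:[0,D] \to M$ be a minimizing geodesic between two points realizing the diameter. Setting $k = \lfloor D/2 \rfloor$ and $x_i = \gg(2i)$ for $i = 0, 1, \ldots, k$, the fact that $\gg$ is minimizing gives $d(x_i, x_j) = 2\brs{i - j} \geq 2$ whenever $i \neq j$, so the unit balls $B_{x_i}(1)$ are pairwise disjoint in $M$. Summing the volume lower bounds gives
\[
1 = \Vol(g) \geq \sum_{i=0}^{k} \Vol(B_{x_i}(1)) \geq (k+1)\,\frac{C}{A^{n/2}},
\]
so $k + 1 \leq A^{n/2}/C$ and hence $D \leq 2(k+1) \leq 2A^{n/2}/C$, which is the desired bound $D(A)$.

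The only substantive obstacle here is the volume lower bound itself, which is not immediate from the Sobolev inequality at a single scale and genuinely requires iteration to bridge from infinitesimal Euclidean behavior up to the unit scale; once (\ref{volbnd}) is granted, the packing argument is essentially formal.
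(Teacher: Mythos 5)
Your proposal is correct and follows exactly the same two-step strategy as the paper's proof: invoke the Hebey-type argument to obtain the uniform lower bound (\ref{volbnd}) on the volume of unit balls from the Sobolev constant bound, then deduce the diameter bound by packing disjoint unit balls along a diameter-realizing geodesic. You have simply spelled out the packing step and sketched the iteration behind the volume bound, both of which the paper leaves implicit.
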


\begin{rmk} A much more robust argument for this lemma, showing the relationship
of Sobolev inequalities and diameter, appears in \cite{BL}.
\end{rmk}

\begin{defn} \label{gldef} Let $(M^n, g)$ be a compact Riemannian manifold.
Consider the
functional
\begin{align*}
\mathcal E(\phi, g) := \frac{\nm{\N \phi}{L^2}^2}{\nm{\phi}{L^2}^2}.
\end{align*}
The \emph{first Laplace eigenvalue} of $g$ is
\begin{align*}
\gl(g) =&\ \inf_{ \{\phi | \int_M \phi dV = 0 \}} \mathcal E(\phi, g).
\end{align*}
\end{defn}

\section{Background on the \texorpdfstring{$L^2$}{L2} flow} \label{flowbckgrnd}

In this section we collect some facts about the functional $\mathcal F$ and
solutions to the $L^2$ flow. First we recall (\cite{Besse} Chapter 4.H) that
\begin{align*}
\grad \FF =&\ 2 \gd d \Rc - 2 \check{R} + \frac{1}{2} \brs{\Rm}^2 g
\end{align*}
where $d$ is the exterior derivative induced by the Levi-Civita connection on
$\Lambda^1 \otimes \Lambda^1$, $\gd$ is the $L^2$-adjoint of $d$, and
\begin{align*}
\check{R}_{ij} =&\ R_i^{pqr} R_{jpqr}.
\end{align*}
From this it follows that, if $s$ denotes the scalar curvature, and $\gD = \tr_g
\N^2$,
\begin{align} \label{gradF}
\tr \grad \FF =&\ - \gD s + 2 \left( \frac{n}{4} - 1 \right) \brs{\Rm}^2.
\end{align}

\noindent Next we note certain elliptic and parabolic coercivity estimates which
we will use in the estimates
below.

\begin{lemma} \label{coercive} (\cite{SL22} Lemma 2.2) There exists a universal
constant $C$ such that if $(M^4, g)$ is a compact four-manifold, then
\begin{align*}
\nm{\N^2 \Rm}{L^2}^2 \leq C \left[ \nm{\grad \FF}{L^2}^2 + \int_M \brs{\N
\Rm}^2\brs{\Rm} \right].
\end{align*}
\end{lemma}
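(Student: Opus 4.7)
The plan is to peel off derivatives on the curvature by successive integration by parts, exploiting the fact that the principal term of $\grad\FF$ is $2\delta d\Rc$, an elliptic operator on $\Rc$ modulo the Bianchi constraint $\delta\Rc=\tfrac12 ds$. The critical dimensional feature I will use is that tracing \eqref{gradF} with $n=4$ kills the quadratic curvature coefficient, giving $\Delta s = -\tr\grad\FF$ and hence free $L^2$ control on $\Delta s$; this is what keeps the scheme from breaking down outside dimension four.

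First I would derive the Bochner-type identity $\int|\nabla^2\Rm|^2 = \int|\Delta\Rm|^2 + O(\int|\Rm|\,|\nabla\Rm|^2)$ by integrating by parts in $\int\nabla_a\nabla_b\Rm\cdot\nabla^a\nabla^b\Rm$ and invoking the commutator $[\Delta,\nabla_b]\Rm = \Rm\ast\nabla\Rm + \nabla\Rm\ast\Rm$. Next I would apply the contracted second Bianchi identity $\nabla^a R_{abcd}=\nabla_c R_{bd}-\nabla_d R_{bc}$ twice to obtain $\Delta R_{ijkl} = \nabla_i\nabla_k R_{jl} - \nabla_i\nabla_l R_{jk} - \nabla_j\nabla_k R_{il} + \nabla_j\nabla_l R_{ik} + \Rm\ast\Rm$, yielding $|\Delta\Rm|^2 \le C|\nabla^2\Rc|^2 + C|\Rm|^4$, and a second Bochner identity would trade $\int|\nabla^2\Rc|^2$ for $\int|\Delta\Rc|^2$ at the cost of $\int|\Rm|\,|\nabla\Rm|^2$. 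Substituting the identity $\delta d\Rc = -\Delta\Rc + \tfrac12\nabla^2 s + \Rm\ast\Rc$ into the formula for $\grad\FF$, together with the dim-$4$ trace identity for $s$ and a scalar Bochner, would then give
\[
\int|\nabla^2\Rm|^2 \le C\int|\grad\FF|^2 + C\int|\Rm|\,|\nabla\Rm|^2 + C\int|\Rm|^4.
\]

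The hard part will be absorbing the residual quartic curvature term $\int|\Rm|^4$. For this I would exploit $\tfrac12\Delta|\Rm|^2 = |\nabla\Rm|^2 + \langle\Rm,\Delta\Rm\rangle$: multiplying by $|\Rm|^2$, integrating, and combining Cauchy-Schwarz and Young's inequality with the $W^{1,2}\hookrightarrow L^4$ Sobolev embedding on a $4$-manifold (applied to $|\Rm|$ via Kato) together with the bound on $\|\Delta\Rm\|_{L^2}$ already obtained, one should get $\int|\Rm|^4 \le \epsilon\int|\nabla^2\Rm|^2 + C_\epsilon(\int|\grad\FF|^2 + \int|\Rm|\,|\nabla\Rm|^2)$. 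Choosing $\epsilon$ small and absorbing into the left-hand side completes the argument. This final $L^4$-absorption is the real technical heart of the proof and is where the dimension $n=4$ is used in a Sobolev-critical way.
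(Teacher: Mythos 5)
Your overall outline — peeling derivatives with Bochner identities, exchanging $\Delta\Rm$ for second derivatives of $\Rc$ via the contracted second Bianchi identity, using $\delta d\Rc=-\Delta\Rc+\tfrac12\nabla^2 s+\Rm\ast\Rc$, and invoking the dimension-four trace identity $\Delta s=-\tr\grad\FF$ to get free control of $\Delta s$ — is the right skeleton. But the final step, where you try to absorb $\int|\Rm|^4$ by Sobolev, is a genuine gap, and it is caused by an earlier inefficiency: you should never let the term $\int|\Rm|^4$ appear in the first place.

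The problem with the absorption is twofold. First, the lemma asserts a \emph{universal} constant, whereas the $L^2$ Sobolev constant of $(M^4,g)$ is a geometric quantity that is not universally bounded; an estimate that invokes $C_S(g)$ simply does not prove the stated lemma. Second, even granting a $C_S$ bound, what Sobolev plus Kato actually gives is $\int|\Rm|^4 \le C_S^2\bigl(\int|\nabla\Rm|^2 + V^{-1/2}\int|\Rm|^2\bigr)^2$, a \emph{quartic} expression in $\|\nabla\Rm\|_{L^2}$; to turn that into $\epsilon\int|\nabla^2\Rm|^2+\dots$ you would need the interpolation $\|\nabla\Rm\|_{L^2}^2\le\|\Rm\|_{L^2}\|\nabla^2\Rm\|_{L^2}$ and then a smallness assumption $C_S^2\int|\Rm|^2\le\epsilon$, i.e.\ a small-energy hypothesis which the lemma does not have. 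So the inequality you claim at the end does not follow from the ingredients you list.

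The fix is to integrate by parts instead of applying pointwise Cauchy--Schwarz whenever a quadratic curvature expression $\Rm\ast\Rm$ is paired against a genuinely second-order object. Concretely: when you reach $\int\langle\Delta\Rm,\Rm\ast\Rm\rangle$, do not estimate it by $\|\Delta\Rm\|_{L^2}\|\Rm\|_{L^4}^2$; instead integrate by parts once to get $-\int\langle\nabla\Rm,\nabla\Rm\ast\Rm\rangle$, which is bounded by $C\int|\nabla\Rm|^2|\Rm|$ — exactly the error term allowed on the right-hand side. Likewise, do not write $|\Delta\Rc|^2\le C(|\grad\FF|^2+|\nabla^2s|^2+|\Rm|^4)$; instead pair $\grad\FF$ directly against $\Delta\Rc$,
\begin{align*}
2\int|\Delta\Rc|^2 = -\int\langle\grad\FF,\Delta\Rc\rangle + \int\langle\nabla^2 s,\Delta\Rc\rangle + \int\langle\Rm\ast\Rm,\Delta\Rc\rangle,
\end{align*}
handle the first term by Young's inequality, the third by one integration by parts as above, and the second by integrating by parts and using Bianchi to reduce it to $\tfrac12\int(\Delta s)^2+O(\int|\Rm||\nabla\Rm|^2)$, at which point the $n=4$ identity $\Delta s=-\tr\grad\FF$ finishes it. Carried out this way, $\int|\Rm|^4$ never appears, the constant stays universal, and the statement follows as written; your route, by contrast, manufactures a term you have no universal means of removing.
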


\begin{prop} \label{Egrowthprop} (\cite{SL22} Proposition 4.6) Given $A > 0$
there exists $\ge > 0$ so that if
$(M^4, g_t)$ a solution to the $L^2$ flow on $[0, T]$ such that
\begin{align*}
\sup_{t \in [0, T]} C_S(g_t) \leq&\ A,\\
\FF(g_0) \leq&\ \ge,
\end{align*}
then
\begin{align*}
\sup_{[0, T]} \nm{\grad \FF}{L^2}^2 + \int_0^T \nm{\N^2 \grad \FF}{L^2}^2 \leq&\
2 \nm{\grad \FF}{L^2(g_0)}^2 + C A^2 \ge^{\frac{1}{4}}.
\end{align*}
\end{prop}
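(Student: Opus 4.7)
The goal is an energy-type differential inequality for $\nm{\grad \FF}{L^2}^2$ along the flow which, once integrated, yields the claimed bound. The key inputs are that $\FF$ is monotone non-increasing along the $L^2$ flow, so $\nm{\Rm}{L^2(g_t)}^2 \leq \ge$ for all $t \in [0,T]$; the dissipation identity $\int_0^T \nm{\grad\FF}{L^2}^2 \, dt = \FF(g_0) - \FF(g_T) \leq \ge$; the Sobolev constant bound $C_S(g_t) \leq A$; and the coercive estimate of Lemma \ref{coercive}.

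First I would differentiate $\nm{\grad \FF}{L^2(g_t)}^2$ in time. Because $\grad \FF$ comes from a fourth-order Riemannian functional, its linearization in the direction $h = \partial_t g$ is a fourth-order elliptic operator in $h$ whose principal symbol is a positive multiple of $\gD^2$. Testing against $\grad \FF$ and integrating by parts then produces the leading dissipation $-c\,\nm{\N^2 \grad \FF}{L^2}^2$. The remaining contributions, including those arising from the variation of the volume form and of the $g$-inner product, take the schematic form $\int \Rm * \grad \FF * \N^2 \grad \FF \, dV$, $\int |\Rm|^2 |\N \grad \FF|^2 \, dV$, and $\int |\Rm|^3 |\grad \FF|^2 \, dV$.

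Second, I would control each such error term using Holder, the multiplicative Sobolev inequalities of Theorem \ref{multsob}, and Lemma \ref{coercive}. In dimension four one has the continuous inclusion $\nm{u}{L^4}^2 \leq C\,C_S(\nm{\N u}{L^2}^2 + \nm{u}{L^2}^2)$, which, applied to $u = \Rm$ or $u = \grad \FF$ and combined with Gagliardo--Nirenberg interpolation, converts the above error integrands into expressions bounded by $\nm{\grad \FF}{L^2}^2$, $\nm{\N^2 \grad \FF}{L^2}^2$, $\nm{\Rm}{L^2}^2$, and the Sobolev constant. Two applications of Sobolev account for the factor $A^2$, while pairing a single linear curvature factor with $\nm{\Rm}{L^2}^{1/2} \leq \ge^{1/4}$ after one interpolation step produces the factor $\ge^{1/4}$. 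The cubic term $\int |\N \Rm|^2 |\Rm|$ on the right side of Lemma \ref{coercive} is handled the same way and, for $\ge$ small, is absorbed into the dissipation $\nm{\N^2 \grad \FF}{L^2}^2$ on the left. Integrating the resulting inequality over $[0,T]$ and using $\int_0^T \nm{\grad \FF}{L^2}^2 \, dt \leq \ge$ to bound the time integrals of the remaining lower-order contributions gives the conclusion; the factor of $2$ in front of $\nm{\grad \FF(g_0)}{L^2}^2$ absorbs a small multiple of $\sup_{[0,T]} \nm{\grad \FF}{L^2}^2$ via a standard bootstrap.

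The main obstacle is the criticality of $\FF$ in dimension four: $\int |\Rm|^2$ is scale invariant, so every Sobolev embedding needed is borderline, and the error terms scale exactly like the leading dissipation. Closing the inequality therefore hinges on combining Lemma \ref{coercive} with Gagliardo--Nirenberg interpolation to turn the smallness of $\nm{\Rm}{L^2}$ into a genuinely small multiplicative constant in front of $\nm{\N^2 \grad \FF}{L^2}^2$. It is precisely this delicate balance that makes the hypothesis $C_S(g_t) \leq A$ essential rather than merely convenient, and that forces the specific form $CA^2 \ge^{1/4}$ of the final error.
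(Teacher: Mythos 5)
The paper does not prove this proposition; it cites it from \cite{SL22} (Proposition 4.6), so there is no argument in this document to compare your reconstruction against directly.

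Your overall architecture is the natural one and almost certainly matches the cited proof in outline: differentiate $\nm{\grad\FF}{L^2}^2$ in time, extract a dissipation term of the form $-c\nm{\N^2\grad\FF}{L^2}^2$, control the remainder with the Sobolev bound, Lemma \ref{coercive}, and interpolation, then integrate and absorb a small multiple of $\sup_{[0,T]}\nm{\grad\FF}{L^2}^2$ into the left-hand side. Your identification of the three a priori inputs — pointwise monotonicity of $\FF$, the dissipation identity $\int_0^T\nm{\grad\FF}{L^2}^2\,dt \le \ge$, and the uniform Sobolev bound — is exactly right, and your observation that the dissipation identity (rather than $T$ being short) is what makes the Gr\"onwall/absorption step close is a genuine insight.

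There are, however, two concrete gaps. First, you assert that the linearization of $\grad\FF$ in the direction $h=\partial_t g$ is ``a fourth-order elliptic operator whose principal symbol is a positive multiple of $\gD^2$.'' This is not the case: the $L^2$ flow is \emph{degenerate} parabolic, as this paper itself states in the introduction. Because $\FF$ is diffeomorphism-invariant, $D\grad\FF$ annihilates the Lie-derivative directions $\{\mathcal L_X g\}$ to leading order, so one does not obtain $-c\nm{\gD\grad\FF}{L^2}^2$ from the naive pairing $\langle D\grad\FF[\grad\FF],\grad\FF\rangle$. Extracting the dissipation requires either exploiting the second-Bianchi-type constraint that $\grad\FF$ itself satisfies, or (as is more standard for this flow) running the energy estimate on the non-degenerate parabolic evolution of curvature and its covariant derivatives and then translating to $\grad\FF = 2\gd d\Rc - 2\check{R} + \tfrac12|\Rm|^2 g$. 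Your outline skips this entirely, and it is the heart of the technical work. Second, your schematic error terms are dimensionally inconsistent: in $n=4$ both $\frac{d}{dt}\nm{\grad\FF}{L^2}^2$ and $\nm{\N^2\grad\FF}{L^2}^2$ scale as $\lambda^{-8}$ under $g\mapsto\lambda^2 g$, whereas $\int|\Rm|^2|\N\grad\FF|^2\,dV$ and $\int|\Rm|^3|\grad\FF|^2\,dV$ scale as $\lambda^{-10}$, so those terms cannot appear in the energy identity; the genuine cubic metric-variation contribution is $\int|\grad\FF|^3\,dV$, which scales correctly but is a different and harder term to estimate. Until the dissipation is justified despite the degeneracy and the error terms are written with the right scaling weight, the plan is a plausible sketch rather than a proof.
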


Next we recall a fundamental existence result on the existence and long time
behavior of solutions to the $L^2$ flow.
\begin{thm} \label{existencecor}  (\cite{SL2LTB} Corollary 1.9)
Let $(M^n, g)$ be a compact Riemannian manifold.  The
solution to the $L^2$ flow with initial condition $g$ exists on a maximal time
interval $[0, T)$.  Furthermore, if $T < \infty$ then
\begin{align*}
\limsup_{t \to T} \brs{\Rm}_{g(t)} = \infty.
\end{align*}
\end{thm}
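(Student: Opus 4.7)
\medskip

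\noindent\textbf{Proof proposal for Theorem \ref{existencecor}.}

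The plan is to establish short-time existence and uniqueness first, and then to derive the blow-up criterion by a bootstrap argument that promotes a bound on $\brs{\Rm}$ into bounds on all higher derivatives of curvature. Short-time existence is a classical obstacle for geometric flows of this type because $\grad \FF$ is only weakly parabolic due to the diffeomorphism invariance of $\FF$. I would handle this by a fourth-order analogue of DeTurck's trick: choose a background connection (say the Levi-Civita connection of the initial metric $g_0$), define a vector field $W = W(g, g_0)$ that captures the obstruction to strict parabolicity, and modify the flow to $\dt g = -\grad \FF + \LL_W g$. Computing the symbol of the linearization of the modified operator, one checks that it becomes strictly elliptic of order four. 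Standard quasilinear fourth-order parabolic theory then provides a unique smooth solution on some maximal interval $[0, T_0)$, and the resulting metrics are pulled back by the flow of $W$ to give a solution of the original flow.

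Next I would establish uniqueness and the short-time continuation criterion. Given two solutions with the same initial data, applying the same DeTurck modification and using the uniqueness of parabolic PDE yields uniqueness of the original flow up to diffeomorphism. From this, the flow admits a maximal interval $[0, T)$ which is characterized by the breakdown of the smooth metric.

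The main content is the blow-up criterion. The plan is to argue by contradiction: assume $T < \infty$ and $\limsup_{t \to T} \brs{\Rm}_{g(t)} \leq K < \infty$. The key analytic step is to prove Bernstein/Shi-style estimates for this fourth-order flow, namely bounds of the form
\begin{align*}
\sup_M \brs{\N^k \Rm}_{g(t)} \leq C(k, K, g_0) \, t^{-k/4} \quad \text{for } 0 < t < T.
\end{align*}
These follow by computing evolution equations for $\brs{\N^k \Rm}^2$, which at the principal order behave like heat equations with a fourth-order diffusion $-\gD^2$, and applying the maximum principle to cutoff combinations of $\brs{\N^k \Rm}^2$ and $\brs{\N^{k-1} \Rm}^2$ in the usual Bernstein style. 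The fourth-order character does not obstruct this scheme; it only changes the weights in the cutoff quantities. Once all derivatives of curvature are bounded uniformly on $[t_0, T)$ for any $t_0 > 0$, the evolution equation $\dt g = -\grad \FF$ shows that the family $\{g(t)\}$ is Cauchy in every $C^k$-norm, so it extends smoothly to a metric $g(T)$ on $M$. Applying short-time existence again with initial data $g(T)$ contradicts the maximality of $T$, and therefore $\limsup_{t\to T} \brs{\Rm}_{g(t)} = \infty$.

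The step I expect to be most delicate is the derivation of the Bernstein estimates: one must track a large number of lower-order terms that appear in $\dt \brs{\N^k \Rm}^2$ through commutators, and keep the top-order fourth-order dissipation dominant. This is where the reference to \cite{SL2LTB} does the real work, and in particular provides the Kato-type inequalities and interpolation estimates needed to absorb the numerous cross terms that arise in a fourth-order setting.
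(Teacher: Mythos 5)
This theorem is not proved in the present paper; it is quoted from \cite{SL2LTB}, so there is no in-paper argument to compare against. That said, your outline correctly identifies the overall architecture used there: a gauge-fixed short-time existence result, smoothing estimates for the higher covariant derivatives of curvature in the presence of a pointwise curvature bound, and then a continuation argument extending the solution past $T$. The smoothing estimate you invoke is exactly what appears here as Theorem~\ref{globalsmoothing}.

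There is, however, a genuine gap in your derivation of those smoothing estimates, and it is the part you flagged as ``most delicate.'' You propose to bound $\brs{\N^k \Rm}$ by the Bernstein/Shi device: compute the evolution of cutoff combinations of $\brs{\N^k\Rm}^2$ and $\brs{\N^{k-1}\Rm}^2$ and apply the maximum principle, asserting that ``the fourth-order character does not obstruct this scheme.'' It does. The maximum principle fails for fourth-order parabolic operators: at an interior spatial maximum of a nonnegative quantity $u$ one has $\gD u \le 0$, but there is no sign on $-\gD^2 u$, and when one expands $-\gD^2\brs{F}^2$ the cross terms $\brs{\gD F}^2$, $\brs{\N^2 F}^2$ enter with the wrong sign. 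This is precisely why establishing Theorem~\ref{globalsmoothing} is a nontrivial theorem rather than a routine adaptation of Shi's estimates. The actual mechanism in \cite{SL2LTB} is integral: one derives local $L^2$ energy estimates for $\N^k\Rm$ against cutoff functions, closes these using interpolation inequalities, and then promotes the integral bounds to pointwise bounds (the companion result, Corollary~\ref{wkcompactness}, is built so as not to presuppose a global Sobolev constant, which is the other hidden obstacle in your scheme). Your proof as written would stall at the maximum-principle step.

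A smaller point on short-time existence: in your DeTurck modification $\dt g = -\grad\FF + \LL_W g$, if $W$ is the usual first-order-in-$g$ DeTurck field then $\LL_W g$ is only second order and cannot repair a fourth-order degenerate symbol. One needs $W$ to depend on three derivatives of $g$ (equivalently, a higher-order gauge-fixing term) so that $\LL_W g$ contributes at fourth order. You signaled awareness of this by calling it a ``fourth-order analogue,'' but as stated the modification would not render the operator strictly parabolic.
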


\begin{thm} (\cite{SL2LTB} Theorem 1.3) \label{globalsmoothing} Fix $m, n \geq
0$.  There exists a constant
$C = C(m, n)$ so that if $(M^n, g(t))$ is a complete solution to the $L^2$ flow
on
$\left[0, T \right]$ satisfying
\begin{align*}
\sup_{M \times \left[0, T \right]} \brs{\Rm} \leq K,
\end{align*}
then
\begin{align} \label{globalsmoothingestimate}
\sup_{M} \brs{\N^m \Rm} \leq C \left( K +
\frac{1}{t^{\frac{1}{2}}} \right)^{1 + \frac{m}{2}}.
\end{align}
\end{thm}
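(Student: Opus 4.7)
The plan is to adapt the Bernstein--Shi smoothing technique, classical for second-order flows like Ricci flow, to the fourth-order setting of the $L^2$ flow, proceeding by induction on $m$. The base case $m=0$ is immediate from the hypothesis $|\Rm| \le K$ (take $C \ge 1$), so the main work is the inductive step; throughout one uses this bound together with the inductive estimates $|\N^i \Rm| \le C_i (K + t^{-1/2})^{1 + i/2}$ for $i < m$.

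First I would derive the evolution equation for $|\N^m \Rm|^2$. Since $\grad \FF$ has the schematic form $\N^2 \Rm + \Rm \ast \Rm$, the evolution of $\Rm$ reads $\partial_t \Rm = - \gD^2 \Rm + (\text{lower order})$, and commuting with $\N^m$ and pairing with $\N^m \Rm$ yields
\begin{equation*}
(\partial_t + \gD^2)|\N^m \Rm|^2 \le -c|\N^{m+2}\Rm|^2 + \sum_{\substack{a+b+c = 2m+4 \\ a,b,c \le m+2}} |\N^{a}\Rm|\,|\N^{b}\Rm|\,|\N^{c}\Rm|.
\end{equation*}
Every lower-order factor is controlled by the inductive hypothesis, so only $|\N^m \Rm|$ and $|\N^{m+1}\Rm|$ must still be handled.

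To close the induction I would introduce a Bernstein auxiliary quantity of the form
\begin{equation*}
F = w_m(t)\, |\N^m \Rm|^2 + A\, w_{m-1}(t)\, |\N^{m-1} \Rm|^2,
\end{equation*}
with weights $w_k(t) = t^{k+2}(K + t^{-1/2})^{-(k+2)}$ tuned so that the inductive bounds enter as scale-invariant combinations, and $A$ chosen large enough that the dangerous $|\N^{m+1}\Rm|^2$ contribution from the first summand is absorbed, after integration by parts, by the good $-|\N^{m+1}\Rm|^2$ term produced by the second.

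The principal obstacle, and where the argument departs most sharply from the Ricci-flow case, is that $\gD^2$ admits no pointwise maximum principle, so one cannot simply apply a maximum principle to $F$. Instead I would work at the integral level: multiply the evolution inequality for $F$ by cutoff functions, integrate in space-time, use the good $-|\N^{m+2}\Rm|^2$ term together with the Gagliardo--Nirenberg interpolation of Theorem \ref{multsob} to absorb the cubic error terms back into a small fraction of itself, and then upgrade the resulting space-time $L^2$ estimate on $|\N^m \Rm|$ to the desired $L^\infty$ bound via Moser iteration in the fourth-order parabolic setting. Completing the induction and unwinding the scaling factors $w_m(t)$ yields \eqref{globalsmoothingestimate}.
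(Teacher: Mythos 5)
This theorem is not proved in the present paper; it is stated as background and cited from \cite{SL2LTB} (Theorem 1.3 there), so there is no in-paper proof to compare your argument against line by line. Your overall strategy is the right one in spirit: because the flow is fourth order, there is no pointwise maximum principle, and one must instead run a Bernstein--Shi induction at the level of weighted integral quantities, absorbing cubic error terms into the good term $-\int |\N^{m+2}\Rm|^2$ and upgrading to $L^\infty$ via a Moser-type iteration. That is indeed the general architecture used for smoothing estimates for fourth-order curvature flows.

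However, there is a genuine gap in the way you propose to close the iteration. You invoke the interpolation inequality of Theorem \ref{multsob}, whose constant depends on the global $L^2$ Sobolev constant $C_S(g)$. The theorem you are trying to prove assumes only $\sup_{M\times[0,T]}|\Rm|\le K$ on a \emph{complete} manifold, with no volume noncollapsing or injectivity radius hypothesis; in particular the manifold may be arbitrarily collapsed, in which case no uniform $C_S(g_t)$ bound is available. (This is precisely the point emphasized in the remark following Proposition \ref{lowdimcollapse}: the improvement of \cite{SL2LTB} over prior work is that it does \emph{not} require a global Sobolev constant bound.) The cure is to localize entirely: one must work with cutoff functions supported on balls of curvature scale $\sim K^{-1/2}$, where the \emph{local} Sobolev/isoperimetric constants are controlled by the two-sided curvature bound alone via volume comparison, and never appeal to a global Sobolev inequality. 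Your sketch also writes the evolution inequality for $|\N^m\Rm|^2$ in a pointwise form with a $-c|\N^{m+2}\Rm|^2$ damping term on the right; that favorable sign only appears after integrating against the cutoff and integrating by parts, and one should formulate the differential inequality for the integral quantity from the outset (the pointwise form generates divergence terms that are not sign-definite). With those two repairs the outline matches the standard approach.
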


\begin{cor} \label{wkcompactness} (\cite{SL2LTB} Corollary 1.5)
Let $\{(M^n_i, g_i(t), p_i)\}$ be a sequence of complete pointed
solutions of the $L^2$ flow, where $t \in (\ga, \gw), - \infty
\leq \ga < \omega \leq \infty$.  Suppose there exists $K < \infty$ and $\gd > 0$
such that
\begin{align*}
\sup_{M_i \times (\ga, \gw)} \brs{\Rm(g_i)}_{g_i} \leq K, \qquad
\inj_{g_i(0)}(p_i) \geq \gd.
\end{align*}
Then there exists a subsequence $\{(M_{i_j}, g_{i_j}(t), p_{i_j}) \}$ and a one
parameter family of Riemannian manifolds $(M_{\infty}, g_{\infty}(t),
p_{\infty})$ such
that $\{(M_{i_j}, g_{i_j}, p_{i_j}) \}$ converges
to
$(M_{\infty}, g_{\infty}(t), p_{\infty})$ in the $C^{\infty}$ Cheeger-Gromov
topology.
\end{cor}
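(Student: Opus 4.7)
The plan is to reduce the statement to a Cheeger–Gromov–Hamilton type compactness argument, by establishing (a) uniform $C^\infty$ bounds on the metrics $g_i(t)$ in spacetime on compact subintervals, and (b) a uniform lower bound on the injectivity radius $\inj_{g_i(t)}(p_i)$ at all $t \in (\ga, \gw)$, not just at $t=0$. Once (a) and (b) are in place, a standard diagonal/Arzel\`a–Ascoli extraction through an exhaustion by metric balls $B_{g_i(t_0)}(p_i, R_j)$, $R_j \to \infty$, produces the claimed Cheeger–Gromov limit.

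For the spatial derivative bounds, Theorem \ref{globalsmoothing} gives, for any compact subinterval $[a,b] \subset (\ga, \gw)$ and any $m \geq 0$, a uniform estimate $\sup_{M_i \times [a,b]} \brs{\N^m \Rm(g_i)} \leq C(m,K,a-\ga)$, since one can translate time so that $a-\ga$ plays the role of the $t^{-1/2}$ scale in (\ref{globalsmoothingestimate}). Combined with $\del_t g_i = -\grad\FF$ and the formula for $\grad \FF$, which is polynomial in $\Rm$ and its derivatives, these estimates promote to uniform bounds on all mixed spacetime derivatives $\N^m \del_t^k g_i$ by differentiating the evolution equation and using the curvature derivative bounds iteratively.

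For the injectivity radius at later times, the key point is that the bound $\brs{\Rm} \leq K$ together with $\inj_{g_i(0)}(p_i) \geq \gd$ gives, via a Cheeger–Gromov–Taylor type estimate, a uniform lower bound on $\inj_{g_i(0)}$ on a fixed $g_i(0)$-ball around $p_i$. The spacetime derivative bounds from the previous step imply $|\del_t g_i|_{g_i} \leq C$ uniformly on $[a,b]$, so the metrics $g_i(t)$ are uniformly equivalent to $g_i(0)$ on compact time intervals; combined with the uniform $\brs{\Rm}$ bound, this propagates the injectivity radius lower bound to all $t \in [a,b]$, yielding $\inj_{g_i(t)}(p_i) \geq \gd'$ for a constant $\gd'$ depending only on $K$, $\gd$, and the length of the interval.

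With these ingredients, one extracts a subsequence converging at $t=t_0$ in the $C^\infty$ Cheeger–Gromov sense via the classical compactness theorem for Riemannian manifolds with bounded geometry, uses the uniform spacetime derivative estimates and the evolution equation to pass to the limit on the whole time interval (pulling back by the Cheeger–Gromov diffeomorphisms and applying Arzel\`a–Ascoli in each $C^k$ on $B(p_\infty, R_j) \times [a,b]$), and finally diagonalizes over an exhaustion $R_j \to \infty$ and $[a_j, b_j] \nearrow (\ga,\gw)$ to produce the limiting pointed family $(M_\infty, g_\infty(t), p_\infty)$. The main obstacle is step (b): propagating the injectivity radius bound forward in time, since $L^2$ flow is fourth-order and one does not have the direct $e^{-Ct}$ type metric comparison of Ricci flow; one must instead route through the derivative estimates of Theorem \ref{globalsmoothing} to control $|\grad\FF|$ and thereby the metric distortion.
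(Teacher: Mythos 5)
The paper does not prove this corollary; it cites it verbatim from \cite{SL2LTB} (Corollary 1.5), so there is no internal argument to compare your proposal against. That said, your outline is the natural one and almost certainly matches that reference: the two essential inputs you identify, namely the smoothing estimates of Theorem \ref{globalsmoothing} (which is itself \cite{SL2LTB} Theorem 1.3) to get uniform spacetime $C^\infty$ bounds on compact subintervals, and the forward propagation of the injectivity radius bound via the resulting uniform bound on $\brs{\del_t g_i} = \brs{\grad\FF}$, are exactly the tools that paper supplies, and the remark following Proposition \ref{lowdimcollapse} indicates that the novelty of \cite{SL2LTB} Corollary 1.5 over earlier versions is precisely the replacement of a global Sobolev constant hypothesis by the pointwise $\inj_{g_i(0)}(p_i) \geq \gd$ hypothesis, which is what your step (b) handles. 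Two small points worth making explicit: when $\ga = -\infty$ the time-translation used to invoke (\ref{globalsmoothingestimate}) should take a finite $\ga' < a$ as the nominal initial time; and the spatial propagation of the injectivity radius bound away from $p_i$ (via volume comparison on balls $B_{g_i(0)}(p_i, R)$ and Cheeger's lemma) produces a lower bound that is allowed to degrade as $R \to \infty$, which is exactly what pointed Cheeger–Gromov compactness requires, so your exhaustion by $R_j \to \infty$ is the right way to organize the diagonal argument.
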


Next we make the observation mentioned in the introduction, namely that for
solutions in dimensions $n = 2, 3$ any finite time singularity must be collapsed
on the scale of curvature.

\begin{prop} \label{lowdimcollapse} Let $(M^n, g(t))$ be a solution to
the $L^2$ flow, $n = 2, 3$. Suppose $g(t)$ exists on a maximal time interval
$[0,
T)$, $T < \infty$. Let $\{(x_i, t_i) \}$ be a sequence of points such that $t_i
\to T$ and
\begin{align*}
\brs{\Rm}(x_i, t_i) = \sup_{[0, t_i]} \brs{\Rm}.
\end{align*}
Then
\begin{align*}
\lim_{i \to \infty} \inj_g(x_i) \brs{\Rm}(x_i) = 0.
\end{align*}
\begin{proof} Suppose the claim is false, and let $\{(x_i, t_i) \}$ be the
sequence of points as in the statement. Let $\gl_i = \brs{\Rm}(x_i, t_i)$, and
let
\begin{align*}
\til{g}^i = \gl_i g \left(t_i + \frac{t}{\gl_i^2} \right).
\end{align*}
A direct calculation shows that $\til{g}^i$ is a solution to the $L^2$ flow on
$[-t_i \gl_i^2, 0]$ with bounded curvature. Moreover, since we have assumed
$\lim_{i \to \infty} \inj(x_i) \brs{\Rm}(x_i) > 0$, we conclude $\lim_{i \to
\infty} \inj_{\til{g}^i}(x_i) > 0$. It follows from Corollary
\ref{wkcompactness} that the sequence of solutions $\{M, g^i, x_i \}$ contains a
subsequence
converging to a smooth, nonflat solution to the $L^2$ flow which we denote
$(M_{\infty}, g^{\infty}, x_{\infty})$. But since $n < 4$, we note that
$\mathcal F(\til{g}^i(0)) = \FF(\gl_i g(t_i)) = \gl_i^{\frac{n}{2} - 2}
\FF(g(0)) \to 0$. Thus $g^{\infty}$ must be flat, a contradiction.
\end{proof}
\end{prop}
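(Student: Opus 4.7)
The plan is to argue by contradiction via a rescaling/blow-up procedure, in the standard fashion for parabolic geometric flows. Suppose the conclusion fails, so after passing to a subsequence there exists $\delta > 0$ with $\inj_{g(t_i)}(x_i) |\Rm|(x_i, t_i) \geq \delta$. Set $\lambda_i := |\Rm|(x_i, t_i)$ and define the parabolic rescalings
\begin{align*}
\tilde{g}^i(t) := \lambda_i \, g\bigl(t_i + \lambda_i^{-2} t\bigr),
\end{align*}
which are solutions to the $L^2$ flow on time intervals of the form $[-t_i \lambda_i^2, 0]$. Note that by Theorem \ref{existencecor} the assumption $T < \infty$ forces $\lambda_i \to \infty$, so these intervals exhaust $(-\infty, 0]$.

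Next I would verify the hypotheses needed to apply the Cheeger--Gromov compactness result, Corollary \ref{wkcompactness}. The curvature bound is automatic from the choice of $(x_i, t_i)$ as the spacetime maximum of $|\Rm|$ up to time $t_i$: after rescaling, $|\Rm(\tilde{g}^i)| \leq 1$ on $[-t_i\lambda_i^2, 0]$, with equality $1$ attained at $(x_i, 0)$. The injectivity radius lower bound $\inj_{\tilde{g}^i(0)}(x_i) \geq \delta$ is precisely the standing contradiction hypothesis. Corollary \ref{wkcompactness} then yields a subsequential pointed limit $(M_\infty, g_\infty(t), x_\infty)$ which is a complete solution of the $L^2$ flow, and since curvature bounds pass through the limit and $|\Rm_{g_\infty}|(x_\infty, 0) = 1$, the limit is \emph{nonflat}.

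The final step exploits the low-dimensional scaling of $\FF$. Under the rescaling $g \mapsto \lambda g$, one computes
\begin{align*}
\FF(\lambda g) = \lambda^{\frac{n}{2} - 2} \FF(g),
\end{align*}
and since $\FF$ is nonincreasing along its own negative gradient flow we have $\FF(g(t_i)) \leq \FF(g_0)$. For $n = 2, 3$ the exponent $\frac{n}{2} - 2$ is strictly negative, so $\FF(\tilde{g}^i(0)) = \lambda_i^{\frac{n}{2}-2} \FF(g(t_i)) \to 0$. Lower semicontinuity of $\FF$ under $C^\infty$ Cheeger--Gromov convergence then yields $\FF(g_\infty(0)) = 0$, forcing $g_\infty(0)$ to be flat, which contradicts nonflatness established above.

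The main obstacle, and the reason the argument works only in dimensions $n < 4$, is exactly in this last scaling computation: the critical dimension for $\FF$ is $n = 4$, at which $\FF$ is scale invariant, and for $n \geq 5$ the exponent flips sign so that blow-ups could concentrate arbitrarily large energy rather than lose it. The other technical point to be careful about is ensuring the injectivity radius bound is phrased at the rescaled initial time $t = 0$ in the correct metric; but since $\inj$ scales as $\sqrt{\lambda}$ under $g \mapsto \lambda g$, the assumed bound $\inj_{g(t_i)}(x_i) \lambda_i \geq \delta$ transfers to $\inj_{\tilde{g}^i(0)}(x_i) \geq \sqrt{\delta}$, which is what Corollary \ref{wkcompactness} requires.
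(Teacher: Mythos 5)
Your proof takes essentially the same route as the paper: blow up at spacetime-maximal curvature points $(x_i,t_i)$, invoke the pointed compactness result (Corollary \ref{wkcompactness}) to extract a nonflat limit solution, and use the scaling law $\mathcal F(\lambda g) = \lambda^{n/2-2}\mathcal F(g)$ with $n<4$ to force the limit to be flat, yielding the contradiction. You also supply several details the paper leaves implicit --- that $\lambda_i \to \infty$ by Theorem \ref{existencecor}, monotonicity of $\mathcal F$ along its gradient flow, and lower semicontinuity of $\mathcal F$ under $C^\infty$ Cheeger--Gromov convergence.

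One arithmetic slip in your closing paragraph is worth flagging. Under $g \mapsto \lambda g$ the injectivity radius scales by $\sqrt\lambda$ while $|\Rm|$ scales by $\lambda^{-1}$, so the product $\inj\cdot|\Rm|$ scales by $\lambda^{-1/2}$ and is \emph{not} scale-invariant. From the hypothesis $\inj_{g(t_i)}(x_i)\,\lambda_i \geq \delta$ one in fact gets
\begin{align*}
\inj_{\tilde g^i(0)}(x_i) = \sqrt{\lambda_i}\,\inj_{g(t_i)}(x_i) \geq \frac{\delta}{\sqrt{\lambda_i}} \to 0,
\end{align*}
not $\geq \sqrt\delta$ as you wrote. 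The deduction becomes correct if the standing noncollapsing hypothesis is taken in the scale-invariant form $\inj^2|\Rm| \geq \delta$ (equivalently $\inj\sqrt{|\Rm|} \geq \sqrt\delta$), which is clearly what is intended --- compare the higher-dimensional collapsing example in Section \ref{highdim}, where the relevant quantity is $|\Rm|\,\inj^2$ --- and the paper's own statement of Proposition \ref{lowdimcollapse} carries the same imprecision. So this is a misprint in the scale-invariant combination rather than a genuine flaw in the argument, but you should be aware that your stated transfer does not follow from the hypothesis as literally written.
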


\begin{rmk} Note that we need the improved compactness theorem proved recently
in (\cite{SL2LTB} Corollary 1.5) to obtain this result, prior results requiring
a global
Sobolev constant bound will not suffice here.
\end{rmk}

We conclude with some remarks on the volume normalized version of the $L^2$
flow. 
It
follows from (\ref{gradF}) that if $(M^n, g(t))$ is a solution to the $L^2$ flow
then
\begin{align*}
\dt \Vol(g(t)) =&\ \frac{4 - n}{4} \mathcal F(g(t)).
\end{align*}
In particular, the initial value problem
\begin{gather} \label{vnflow}
\begin{split}
\dt g =&\ - \grad \FF + \frac{n-4}{2n} \frac{\mathcal F(g)}{\Vol(g)} \cdot
g\\
g(0) =&\ g_0,
\end{split}
\end{gather}
preserves the volume of the time dependent metrics, and we call it the
\emph{volume-normalized $L^2$ flow}.  One can check that for an
initial metric $g_0$, the corresponding solutions to the $L^2$ flow and
the volume normalized $L^2$ flow differ by a rescaling in space and time. 
Furthermore, the volume normalized $L^2$ flow is the gradient flow of the
functional
\begin{gather} \label{Ftildef}
 \til{\FF}(g) = \Vol(g)^{\frac{4-n}{n}} \FF(g)
\end{gather}

\begin{defn} \label{nonsingdef} A solution $(M^n, g(t))$ to the volume
normalized $L^2$ flow is \emph{nonsingular}
if
it exists on $[0, \infty)$ with a
uniform bound on the curvature tensor.
\end{defn}

\begin{thm} (\cite{SL2LTB} Theorem 1.16) \label{nonsingularthm} Let $(M^n,
g(t))$ be a nonsingular solution
to the volume normalized $L^2$ flow.  Then either
\begin{itemize}
\item{ For all $p \in M$, $\limsup_{t \to \infty} \inj_p g(t) = 0$.}
\item{ There exists a sequence of times $t_i \to \infty$ such that $\{g(t_i) \}$
converges to a smooth metric on $M$ which is critical for $\til{\FF}$.}
\item{ There exists a sequence of points $(p_i, t_i), t_i \to \infty$ such that
$\{(M, g(t_i), p_i) \}$
converges to a complete noncompact finite volume metric which is critical for
$\til{\FF}$.}
\end{itemize}
\end{thm}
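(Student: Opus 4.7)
The plan is to combine the gradient-flow monotonicity of $\til{\FF}$ with the smooth compactness result of Corollary \ref{wkcompactness}. Because $(M,g(t))$ is nonsingular, $\sup_{M \times [0,\infty)} \brs{\Rm} \leq K$, and Theorem \ref{globalsmoothing} upgrades this to uniform bounds on all covariant derivatives of curvature. Since the volume-normalized flow is the negative gradient flow of $\til{\FF}$, integrating the Lyapunov identity $\dtau \til{\FF}(g(\tau)) = -\nm{\grad \til{\FF}(g(\tau))}{L^2}^2$ over $[0,\infty)$ and using $\til{\FF} \geq 0$ yields $\int_0^\infty \nm{\grad \til{\FF}(g(\tau))}{L^2}^2 \, d\tau < \infty$. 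Hence there exists a sequence $t_i \to \infty$ along which $\nm{\grad \til{\FF}(g(t_i))}{L^2} \to 0$.

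If the first alternative fails, there is some $p \in M$ with $\limsup_{t \to \infty} \inj_{g(t)} p > 0$; after passing to a subsequence of the $\{t_i\}$, I may assume $\inj_{g(t_i)} p \geq \gd > 0$. Corollary \ref{wkcompactness} then supplies a pointed $C^\infty$ Cheeger-Gromov subsequential limit $(M_\infty, g_\infty, p_\infty)$, which is a complete smooth Riemannian manifold. The next step is to show $g_\infty$ is critical for $\til{\FF}$: pulling back $g(t_i)$ by the Cheeger-Gromov diffeomorphisms onto a precompact open set $U \subset M_\infty$ and using the uniform bounds from Theorem \ref{globalsmoothing} (which in particular give uniform smoothness of $\grad \til{\FF}$), one obtains $\int_U \brs{\grad \til{\FF}(g_\infty)}^2 dV_{g_\infty} \leq \liminf_i \nm{\grad \til{\FF}(g(t_i))}{L^2}^2 = 0$, and exhausting $M_\infty$ by such $U$ gives $\grad \til{\FF}(g_\infty) \equiv 0$. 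Since volume is preserved along the flow, Fatou's lemma yields $\Vol(g_\infty) \leq \Vol(g_0) < \infty$.

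To distinguish alternatives (ii) and (iii), I would analyze the injectivity radius along the sequence. If $\inj_{g(t_i)}$ is bounded below uniformly over all of $M$ at times $t_i$, then the uniform curvature and volume bounds force the Cheeger-Gromov diffeomorphisms to cover $M$ eventually, so $M_\infty$ is compact and diffeomorphic to $M$, giving alternative (ii); otherwise some volume escapes to infinity in the Cheeger-Gromov sense, producing a noncompact limit of finite volume, giving alternative (iii). The main obstacle is the passage to criticality in the limit: $L^2$ smallness of $\grad \til{\FF}$ is only a global statement on $(M, g(t_i))$, while Cheeger-Gromov convergence is local on $M_\infty$, so the uniform derivative-of-curvature bounds supplied by Theorem \ref{globalsmoothing} are essential to conclude pointwise vanishing, and one must also rule out that the mass of $\grad \til{\FF}$ concentrates entirely in the collapsing region of $M$ so that nothing survives to detect criticality in the limit.
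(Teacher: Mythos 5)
This theorem is quoted in the paper's background section from \cite{SL2LTB} (Theorem 1.16); no proof is given here, so there is no in-paper argument to compare against. Your outline is the natural one for classifying nonsingular gradient-flow solutions, in the spirit of Hamilton's analysis of nonsingular Ricci flows: integrate the Lyapunov identity to get $\int_0^\infty \nm{\grad\til{\FF}(g(t))}{L^2}^2\,dt < \infty$, extract times where the gradient is $L^2$-small, invoke the smoothing estimates of Theorem \ref{globalsmoothing} together with the compactness of Corollary \ref{wkcompactness}, and pass criticality to the limit via local $C^\infty$ convergence and lower semicontinuity of the $L^2$ norm.

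There is, however, a gap in the step where you pass from ``the first alternative fails'' to ``after passing to a subsequence of the $\{t_i\}$, I may assume $\inj_{g(t_i)} p \geq \gd > 0$.'' The sequence $\{t_i\}$ was selected so that $\nm{\grad\til{\FF}(g(t_i))}{L^2} \to 0$; the hypothesis $\limsup_{t\to\infty}\inj_{g(t)} p > 0$ only produces some \emph{other} sequence $\{s_j\}$ along which the injectivity radius at $p$ is bounded below, and nothing a priori makes the two compatible. One must construct a single sequence satisfying both conditions. The standard repair: the uniform curvature bound plus Theorem \ref{globalsmoothing} gives a uniform $C^0$ bound on $\grad\til{\FF}$ (for $t \geq 1$, say), hence a Lipschitz bound on $t \mapsto g(t)$ in $C^0$; consequently $\inj_{g(t)} p$ cannot drop too quickly, so around each $s_j$ there is an interval of fixed length $2\eta$ on which $\inj_{g(\cdot)} p \geq \gd/2$. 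Since $\int_0^\infty \nm{\grad\til{\FF}(g(t))}{L^2}^2\,dt < \infty$, the mass of the integrand over these windows must tend to zero, and one can select $t_j$ within each window with $\nm{\grad\til{\FF}(g(t_j))}{L^2}$ as small as desired. With this patch the remaining steps go through; the dichotomy between (ii) and (iii) via a uniform versus pointwise injectivity radius lower bound is the correct mechanism, using the preserved volume to bound the volume of the limit.
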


\section{Three-manifolds with symmetry} \label{threesym}

In this section we study solutions to the $L^2$ flow where the initial condition
is either a warped product of a surface with constant curvature over $S^1$, or
an
$SO(3)$-invariant metric on $S^3$. These two cases are natural to combine since
$SO(3)$-invariant metrics on $S^3$ are equivalent to warped product metrics on
$(-1, 1) \times S^2$ with certain boundary conditions prescribed below.
Specifically,
fix $\Sigma$ a compact
Riemann surface, $\chi(\Sigma) \neq 0$, and let $g_{\Sigma}$ denote a metric of
constant curvature $-1, 1$ depending on the Euler characteristic of $\Sigma$.
Let $M \cong (-1, 1) \times \Sigma$.  Let $\phi, \psi : (-1, 1) \to \mathbb R_{>
0}$, and consider
the Riemannian metric
\begin{gather} \label{warped}
g = \phi(x)^2 dx^2 + \psi(x)^2 g_{\Sigma}.
\end{gather}
It will frequently be useful to use the natural geometric coordinate of
lateral distance from the slice $\Sigma \times \{0\}$.  In particular, set
\begin{gather} \label{sdef}
s(x) = \int_0^x \phi(w) dw.
\end{gather}
The range of $s$ is some interval we will always refer to as $(a, b)$. Moreover,
let
\begin{align*}
L(g) := \int_{-1}^1 \phi(x) dx.
\end{align*}
$L$ represents the length of the base circle in the case of a product topology,
or the distance from the north pole to the south pole in the case of $SO(3)$
invariant metrics on $S^3$.  Of course $L = b - a$.  Using
the parameter $s$, the metric (\ref{warped}) takes the form
\begin{gather} \label{warpeds}
g = ds^2 + \psi(s)^2 g_{\Sigma}.
\end{gather}
If we impose periodic boundary conditions, i.e. that $\phi$ and $\psi$ extend
to smooth functions on $S^1$, then $g$ defines a metric on $S^1 \times \Sigma$.
 To produce $SO(3)$-invariant metrics on $S^3$, we will impose
\begin{gather} \label{spherebndry}
\begin{split}
\lim_{x \to \pm 1} \psi =&\ 0\\
\lim_{x \to \pm 1} \psi_s =&\ \mp 1.
\end{split}
\end{gather}

Next we establish some geometric estimates for the Riemannian
manifolds described above. We will informally refer to
these as warped products, with the assumption that the
boundary
conditions have been chosen appropriate to the given topology.
\begin{lemma} Let $(M^3, g)$ be a warped product. Let $v$ and
$w$
denote vectors in $\pi^* T\Sigma$.  Then
\begin{align} \label{warpedcurvature}
K_1 = K\left(\frac{\del}{\del s} \wedge v \right) =&\ - \frac{\psi_{ss}}{\psi},
\qquad K_2 = K(v \wedge w) = \frac{K_{\Sigma} - \psi_s^2}{\psi^2}.
\end{align}
\end{lemma}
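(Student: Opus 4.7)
The result is the standard warped product curvature identity specialized to a one-dimensional base, so the plan is essentially a bookkeeping calculation in adapted coordinates $(s, x^1, x^2)$, where $(x^i)$ are local coordinates on $\gS$. Write $f = \psi(s)$ so that $g_{ss} = 1$, $g_{ij} = f^2 (g_\gS)_{ij}$, $g_{si}=0$, and the inverse $g^{ij} = f^{-2} (g_\gS)^{ij}$.

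First I would compute the Christoffel symbols. The nonvanishing ones are
\begin{align*}
\gG^s_{ij} = -f f_s (g_\gS)_{ij}, \qquad
\gG^i_{sj} = \frac{f_s}{f}\,\gd^i_j, \qquad
\gG^i_{jk} = \til{\gG}^i_{jk},
\end{align*}
where $\til{\gG}$ are the Christoffel symbols of $g_\gS$. The mixed sectional curvature $K_1$ then follows from a one-line computation of $R(\del/\del s, \del/\del x^i)\del/\del s$ using the standard coordinate formula for $\Rm$: only the $\del_s \gG^i_{si}$ term survives nontrivially, giving
\begin{align*}
R(\del_s, \del_i)\del_s = -\frac{f_{ss}}{f}\,\del_i,
\end{align*}
and hence $K_1 = -\psi_{ss}/\psi$ on unit vectors.

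For the purely horizontal plane I would use the Gauss equation applied to the totally umbilic slice $\gS_s := \{s = \text{const}\}$, whose induced metric is $f^2 g_\gS$. Intrinsically this slice has sectional curvature $K_\gS/f^2$. Its second fundamental form, computed from $\gG^s_{ij} = -f f_s (g_\gS)_{ij}$, equals $II_{ij} = -f f_s (g_\gS)_{ij}$, i.e.\ $II = -(f_s/f)\,g|_{\gS_s}$; so the extrinsic contribution to the sectional curvature of any plane tangent to $\gS_s$ is $-f_s^2/f^2$. Subtracting this from the intrinsic curvature yields
\begin{align*}
K_2 = \frac{K_\gS}{f^2} - \frac{f_s^2}{f^2} = \frac{K_\gS - \psi_s^2}{\psi^2},
\end{align*}
as claimed.

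There is really no obstacle here beyond keeping signs straight; the only mild point to watch is that the Gauss equation sign used must match the convention for sectional curvature already in force in the paper, and that the one-dimensional base makes $\N^2_{\del_s \del_s} f$ reduce to the plain second derivative $f_{ss}$, with no extra Christoffel terms on the base.
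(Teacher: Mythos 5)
The paper states this lemma without proof — it is the standard warped-product curvature identity (see e.g.\ O'Neill, Besse Ch.\ 7, or Petersen) specialized to a one-dimensional base — so there is no argument in the text to compare against. Your derivation is correct: the Christoffel symbols $\gG^s_{ij} = -\psi\psi_s (g_\gS)_{ij}$, $\gG^i_{sj} = (\psi_s/\psi)\gd^i_j$, $\gG^i_{jk} = \til{\gG}^i_{jk}$ are right, the direct computation of $R(\del_s,\del_i)\del_s$ gives $K_1 = -\psi_{ss}/\psi$ (a sanity check: $\psi = \sin s$ gives $K_1 = 1$, the round sphere), and the Gauss equation applied to the totally umbilic slice $\{s=\text{const}\}$, with $II = -(\psi_s/\psi)\, g|_{\gS_s}$ relative to the unit normal $\del_s$, correctly yields $K_2 = (K_\gS - \psi_s^2)/\psi^2$. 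One small wording slip: you call $-\psi_s^2/\psi^2$ the ``extrinsic contribution'' and then say you ``subtract'' it from the intrinsic curvature — taken literally that would flip the sign; you mean that the ambient sectional curvature equals the intrinsic curvature of the slice \emph{plus} this (negative) extrinsic term, i.e.\ $K_2 = K_\gS/\psi^2 - \psi_s^2/\psi^2$. The displayed formula is correct, so this is purely a matter of phrasing, and your approach — coordinate Christoffels for the mixed plane, Gauss equation for the fiber plane — is the natural one.
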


\begin{lemma} Let $(M^3, g)$ be a warped product.  Then
\begin{align} \label{volumecalc}
\Vol(g) =&\ \Vol(g_{\Sigma}) \int_a^b \psi^2 dw,\\ \label{L2calc}
\int_M \brs{\Rm}^2 dV =&\ \Vol(g_{\Sigma}) \int_a^b \left( 4 \psi_{ss}^2 + 2
\frac{\left(K_{\Sigma} - \psi_s^2 \right)^2}{\psi^2} \right) dw.
\end{align}
\begin{proof} We directly compute
\begin{align*}
\Vol(g) =&\ \int_M dV_g = \int_a^b \int_{\Sigma} \psi^2 d\Sigma dw =
\Vol(g_{\Sigma}) \int_a^b \psi^2 dw.
\end{align*}
Next, using (\ref{warpedcurvature}) we compute
\begin{align*}
\int_M \brs{\Rm}^2 =&\ \int_a^b \int_{\Sigma} \left( 4
\frac{\psi_{ss}^2}{\psi^2} + 2 \frac{\left(K_{\Sigma} - \psi_s^2
\right)^2}{\psi^4} \right) \psi^2 d\Sigma dw\\
=&\ \Vol(g_{\Sigma}) \int_a^b \left(4 \psi_{ss}^2 + 2 \frac{\left(K_{\Sigma} -
\psi_s^2 \right)^2}{\psi^2} \right) dw.
\end{align*}
\end{proof}
\end{lemma}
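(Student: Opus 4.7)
The plan is to establish both identities by direct computation, using the warped-product structure and the sectional-curvature formulas of the preceding lemma; no global arguments are required.

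For the volume formula, I would observe that the metric $g = ds^2 + \psi(s)^2 g_\Sigma$ is a metric product transverse to each $s$-slice, so its volume form factorizes as $dV_g = \psi^2\, dV_{g_\Sigma}\, ds$, the factor $\psi^2$ arising because conformal rescaling of a two-dimensional metric by $\psi^2$ scales the area form by $\psi^2$. An application of Fubini over $(a,b) \times \Sigma$ then yields the first identity directly.

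For the curvature integral, I would first use (\ref{warpedcurvature}) to record the two distinct sectional curvatures $K_1 = -\psi_{ss}/\psi$ and $K_2 = (K_\Sigma - \psi_s^2)/\psi^2$, and then express $|\Rm|^2$ pointwise in terms of them. Choosing an orthonormal frame $\{e_1 = \partial/\partial s, e_2, e_3\}$ adapted to the product, among the three coordinate $2$-planes two contain $e_1$ and both have sectional curvature $K_1$, while the remaining one is tangent to $\Sigma$ and has sectional curvature $K_2$. To conclude that $|\Rm|^2$ depends only on these sectional curvatures, I would note that the off-diagonal components $R(e_i,e_j,e_k,e_l)$ with $\{i,j\} \neq \{k,l\}$ vanish: in dimension three the Weyl tensor is identically zero, and the Ricci tensor of a warped product of this form is diagonal in the adapted frame, so the full Riemann tensor (viewed as a symmetric endomorphism of $\Lambda^2$) is diagonal as well. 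Collecting terms in the convention used throughout the paper gives $|\Rm|^2 = 4K_1^2 + 2K_2^2$ at each point.

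Substituting and multiplying by the volume element $\psi^2\, dV_{g_\Sigma}\, ds$, the factor $\psi^2$ cancels the $1/\psi^2$ in the $K_1^2$ contribution and one of the two factors of $1/\psi^2$ in the $K_2^2$ contribution, leaving the integrand $4\psi_{ss}^2 + 2(K_\Sigma - \psi_s^2)^2/\psi^2$ after pushing $\Vol(g_\Sigma)$ out of the integral. The calculation is essentially routine; the only mild obstacle is keeping the normalization conventions for $|\Rm|^2$ consistent across the antisymmetries of $\Rm$ so that the numerical coefficients $4$ and $2$ emerge in the final expression.
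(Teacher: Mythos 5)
Your proposal is correct and follows essentially the same direct computation as the paper: factor the volume form as $\psi^2\, dV_{g_\Sigma}\, ds$, express $\brs{\Rm}^2$ pointwise via the two sectional curvatures $K_1, K_2$ from (\ref{warpedcurvature}), and integrate by Fubini. The only addition you make is an explicit justification that the curvature operator is diagonal (Weyl $=0$ in dimension three plus diagonal Ricci), which the paper leaves implicit; and your phrase ``collecting terms in the convention used throughout the paper'' correctly glosses over fixing the overall constant $2$ (so that $\brs{\Rm}^2 = 4K_1^2 + 2K_2^2$ rather than $2K_1^2 + K_2^2$), matching the paper's convention under which the round $S^3$ has $\brs{\Rm}^2 = n(n-1) = 6$.
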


\begin{lemma} \label{noncollapsesphere} There is a constant $\gd > 0$ so that if
$(S^3, g)$ is an $SO(3)$-invariant metric with $\brs{\Rm} \leq 1$, then $L \geq
\gd$.
\begin{proof} Recall that our metric satisfies (\ref{spherebndry}).  Let $\mu =
\sup \{t > 0 | \frac{1}{2} \leq \psi_s(a + t) \leq 2 \}$. Clearly $L \geq \mu$,
thus it
suffices to bound $\mu$ from below.  First note that on $[a, a + \mu]$ certainly
$\psi \leq 2\mu$.  Without loss of generality assume $\mu \leq
\frac{1}{8}$, so that $\psi \leq \frac{1}{4}$ on $[0, \mu]$. Now observe that
since $\brs{\Rm} \leq 1$,
\begin{align*}
\psi_{ss} = - \psi K_1 \geq - \psi \geq -\frac{1}{4}.
\end{align*}
It follows that, on $[0, \mu]$, $\psi_s \geq 1 - \frac{\mu}{4}$. Likewise we can
estimate on $[0, \mu]$
\begin{align*}
\psi_s^2 = K_{\Sigma} - K_2 \psi^2 \leq 1 + \psi^2 \leq 1 + \left( 2 \mu
\right)^2 \leq 1 + \frac{1}{4}.
\end{align*}
This implies a lower bound for $\mu$, and the lemma follows.
\end{proof}
\end{lemma}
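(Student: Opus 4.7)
The plan is to argue directly from the warped product ODE, using the boundary conditions \eqref{spherebndry} and the pointwise sectional curvature bounds inherited from $|\Rm| \leq 1$. Since pointwise $|\Rm|^2 = 4K_1^2 + 2K_2^2$, the formulas \eqref{warpedcurvature} with $K_\Sigma = 1$ give a universal constant $C$ with
\[
|\psi_{ss}| \leq C\psi, \qquad |1 - \psi_s^2| \leq C\psi^2 \qquad \text{on } [a,b],
\]
and these, together with $\psi(a)=\psi(b)=0$, $\psi_s(a)=1$, $\psi_s(b)=-1$, are the only inputs I will need.

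First I would bound $\sup_{[a,b]}\psi$ in terms of $L = b - a$. Let $s_\ast \in [a,b]$ be a maximum point of $\psi$; on $[a,s_\ast]$ one has $\psi_s \geq 0$, so the second inequality above yields $0 \leq \psi_s \leq \sqrt{1 + C\psi^2}$. Standard ODE comparison against the solution of $y' = \sqrt{1 + Cy^2}$, $y(a)=0$, gives
\[
\psi(s) \leq C^{-1/2}\sinh\bigl(\sqrt{C}(s-a)\bigr) \qquad \text{for } s \in [a,s_\ast].
\]
Since $\psi$ is monotone decreasing on $[s_\ast,b]$, we conclude $\sup_{[a,b]}\psi \leq C^{-1/2}\sinh(\sqrt{C}\,L)$.

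Second, I would integrate the $K_1$ bound $\psi_{ss} \geq -C\psi$ over $[a,b]$, using the two endpoint values of $\psi_s$:
\[
-2 \;=\; \psi_s(b) - \psi_s(a) \;=\; \int_a^b \psi_{ss}\,ds \;\geq\; -C\int_a^b \psi\,ds \;\geq\; -\sqrt{C}\,L\sinh\bigl(\sqrt{C}\,L\bigr).
\]
This forces $\sqrt{C}\,L\sinh(\sqrt{C}\,L) \geq 2$, which fails once $L$ drops below an explicit universal constant $\delta$, giving the lemma.

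I do not expect any serious obstacle; the only point requiring care is that the ODE comparison is valid throughout $[a,b]$, which uses $\psi > 0$ on the open interval $(a,b)$ (since $g$ is a smooth metric on $S^3$) together with the monotonicity argument on $[s_\ast,b]$ above, so the uniform pointwise bound on $\psi$ extends all the way to $s = b$ and the integration in the second step is justified.
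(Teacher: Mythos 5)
Your argument is correct, but it takes a genuinely different route from the paper's. The paper argues locally near one pole: it sets $\mu$ equal to the first exit time of $\psi_s$ from the window $[\tfrac{1}{2},2]$, notes $\psi\le 2\mu$ there, and then uses the pointwise bounds $\psi_{ss}\ge -\psi$ and $\psi_s^2\le 1+\psi^2$ to show that for $\mu\le\tfrac{1}{8}$ the derivative $\psi_s$ cannot reach the boundary of that window, a contradiction; hence $L\ge\mu>\tfrac{1}{8}$. You instead argue globally over the whole interval, exploiting the data at \emph{both} poles: the ODE comparison $\psi_s\le\sqrt{1+C\psi^2}$, $\psi(a)=0$, gives $\sup\psi\le C^{-1/2}\sinh(\sqrt{C}L)$, and then integrating $\psi_{ss}\ge -C\psi$ against the jump $\psi_s(b)-\psi_s(a)=-2$ forces $\sqrt{C}\,L\sinh(\sqrt{C}\,L)\ge 2$. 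Both work; yours has the appeal of a single clean integral inequality, while the paper's uses only one endpoint's boundary data.

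One small cleanup: your auxiliary claims that $\psi_s\ge 0$ on $[a,s_\ast]$ and that $\psi$ is monotone decreasing on $[s_\ast,b]$ need not be true (there is no reason $\psi$ is unimodal a priori), but they are also unnecessary. The inequality $\psi_s\le\sqrt{1+C\psi^2}$ holds pointwise on all of $(a,b)$ from $|K_2|\le C$, with no sign assumption on $\psi_s$, so the standard comparison with $y'=\sqrt{1+Cy^2}$, $y(a)=0$, already yields $\psi(s)\le C^{-1/2}\sinh(\sqrt{C}(s-a))$ for all $s\in[a,b]$; in particular $\sup_{[a,b]}\psi\le C^{-1/2}\sinh(\sqrt{C}L)$ without invoking monotonicity on either side of $s_\ast$. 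With that simplification the proof is watertight.
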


\begin{prop} \label{noncollapseprop} Let $(M^3, g)$ be a warped product with
fiber $\Sigma$, $\chi(\Sigma) \neq 0$, further satisfying
$\brs{\Rm} \leq
1$.  Given $\ge > 0$, there exists $\gd > 0$ so that if $L_g \geq \ge$ then
\begin{align} \label{noncollapse}
\inj_g \geq \gd.
\end{align}
\begin{proof} We first consider the case of product topology, i.e. $M \cong
S^1 \times \Sigma$.  Fix $s_0 \in (a, b)$ a minimum point for $\psi$.  At this
point we compute
\begin{align*}
1 \geq \brs{K_2} =&\ \brs{\frac{K_{\Sigma} - \psi_s^2}{\psi^2}} =
\frac{\brs{K_{\Sigma}}}{\psi^2}.
\end{align*}
We conclude that for all $s$,
\begin{align*}
\psi(s) \geq \psi(s_0) \geq \brs{K_{\Sigma}} = 1
\end{align*}
since $\chi(\Sigma) \neq 0$.  We now show a lower volume growth estimate for
sufficiently small balls. Fix a constant $r_0 > 0$ so that $r_0 \leq
\inj(g_{\Sigma})
$, and also $r_0 \leq \frac{\ge}{2} \leq \frac{L}{2}$. Now fix an arbitrary
$(p_0, s_0) \in \Sigma
\times S^1$ and consider $B_{r}(p_0, s_0)$.  We want to show that there is a
uniform constant $\mu > 0$ so that, for all $r \leq r_0$,
\begin{align*}
\frac{\Vol(B_r(p_0, s_0))}{r^3} \geq \mu.
\end{align*}
Without loss of generality we can reparameterize $s$ so that $s_0 = 0$, and the
range of $s$ is $\left( - \frac{L}{2}, \frac{L}{2} \right)$. First we claim the
inclusion
\begin{align} \label{Uinc}
U := B_{\frac{r}{2}, \psi^2(s_0) g_{\Sigma}}(p_0) \times \left[s_0 -
\frac{r}{2},
s_0 + \frac{r}{2} \right] \subset B_{r, g} (p_0, s_0).
\end{align}
To show this let $(q, t) \in U$ and let $\gg$ denote the curve which is the
concatenation of the shortest geodesic in $\Sigma$ connecting $p_0$ and $q$, in
the metric $\psi^2(s_0) g_{\Sigma}$, with the lateral curve connecting $(q,
s_0)$ to $(q, t)$.  One has
\begin{align*}
d_{g} ((p_0, s_0), (q, t)) \leq&\ \Length(\gg) \leq \frac{r}{2} + \frac{r}{2} =
r.
\end{align*}
Therefore the inclusion (\ref{Uinc}) holds.  One can then compute
\begin{align*}
\Vol(B_r(p_0,s_0)) \geq&\ \Vol(U)\\
=&\ \int_{s_0 - \frac{r}{2}}^{s_0 + \frac{r}{2}} \int_{B_{\frac{r}{2},
\psi^2(s_0) g_{\Sigma}}(p_0, s_0)} \psi^2(w) d\Sigma dw\\
\geq&\ \inf \psi^2 \int_{s_0 - \frac{r}{2}}^{s_0 + \frac{r}{2}}
\int_{B_{\frac{r}{2}, \psi^2(s_0) g_{\Sigma}}(p_0, s_0)} d\Sigma dw\\
\geq&\ \frac{\nu}{4} r^3.
\end{align*}
where $\nu$ is a lower bound on the volume growth of $g_{\Sigma}$.  The lower
bound on volume growth follows, and by Cheeger's lemma the proposition
follows.

For the case of $SO(3)$-invariant metrics on $S^3$, we first consider the north
and south poles.  By a direct argument as in Lemma \ref{noncollapsesphere}, we
can obtain positive lower bound for $\psi_s$ in some controlled neighborhood
around $s = a$.  This implies a linear lower bound on the growth of $\psi$ near
$s = a$ and then by a direct integration we can obtain the requisite volume
lower bound near $a$.  In fact this argument produces a volume lower bound for
points near $a$ as well, and a directly analogous bound takes care of points
near $s = b$.  For points in the interior the argument is the same as that above
for product topologies, finishing the proof.
\end{proof}
\end{prop}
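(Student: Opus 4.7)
The plan is to reduce the injectivity radius estimate to a uniform Euclidean-type volume growth bound $\Vol(B_r(p)) \geq \mu r^3$ for small $r$, and then invoke Cheeger's lemma, which takes a bound on sectional curvature together with such volume growth and returns an injectivity radius lower bound. The topological hypothesis $\chi(\Sigma) \neq 0$ will enter through a uniform lower bound on the warping function $\psi$.

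For the product topology $M \cong S^1 \times \Sigma$, I first extract a lower bound on $\psi$ using the curvature formulas (\ref{warpedcurvature}). At an interior minimum $s_0$ of $\psi$ one has $\psi_s(s_0) = 0$, so $K_2(s_0) = K_\Sigma/\psi(s_0)^2$; since $\chi(\Sigma) \neq 0$ forces $|K_\Sigma| = 1$ and $\brs{\Rm} \leq 1$ forces $|K_2| \leq 1$, we conclude $\psi \geq 1$ everywhere. To then produce the volume bound near a point $(p_0, s_0)$, I would fix a scale $r_0 \leq \min(\inj(g_\Sigma), \ge/2)$ and, for each $r \leq r_0$, verify the inclusion
\begin{align*}
B^{\psi(s_0)^2 g_\Sigma}_{r/2}(p_0) \times [s_0 - r/2,\, s_0 + r/2] \subset B_r(p_0, s_0)
\end{align*}
by concatenating a shortest $\Sigma$-geodesic with a lateral arc and applying the triangle inequality. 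Integrating the volume form $\psi(w)^2\, d\Sigma\, dw$ and using $\psi \geq 1$ yields $\Vol(B_r) \geq c\, r^3$, where $c$ depends only on the universal volume growth constant of the constant-curvature surface $(\Sigma, g_\Sigma)$.

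For $SO(3)$-invariant metrics on $S^3$, the same argument handles interior points at which $\psi$ is bounded below, so the only new difficulty is at points near the poles $s = a, b$, where $\psi \to 0$ and the minimum-point argument above breaks down. Here I would mimic the proof of Lemma \ref{noncollapsesphere}: combining the boundary conditions (\ref{spherebndry}) with $|K_1| = |\psi_{ss}/\psi| \leq 1$ produces a uniform (curvature-controlled) neighborhood of the pole on which $\psi_s \geq 1/2$, hence $\psi(s) \geq (s-a)/2$. A direct integration against a cylindrical inclusion of the same flavor still gives $\Vol(B_r) \geq c\, r^3$ for small $r$. Cheeger's lemma then supplies the desired $\gd$.

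The principal technical point is bookkeeping scales: $r_0$ must depend only on $\ge$ (through $L_g \geq \ge$, which leaves lateral room for the cylinder) and on the intrinsic data of $g_\Sigma$, while the size of the boundary neighborhood in the $SO(3)$ case must depend only on the curvature bound, not on $L_g$. Both follow from inspecting the respective estimates, so no genuinely new analytic input is needed beyond the ingredients already assembled.
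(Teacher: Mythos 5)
Your proposal reproduces the paper's proof essentially step for step: the minimum-point argument giving $\psi \geq 1$, the cylinder inclusion verified by concatenating a fiber geodesic with a lateral arc, the resulting volume growth bound, Cheeger's lemma, and the separate pole analysis for the $SO(3)$-invariant case via the argument of Lemma \ref{noncollapsesphere}. One detail that both you and the paper's write-up leave implicit is that the base of the cylinder has $g_\Sigma$-radius $r/(2\psi(s_0))$, so its $g_\Sigma$-area is of order $r^2/\psi(s_0)^2$; using only $\psi \geq 1$ in the integrand then gives $\Vol(U)$ of order $r^3/\psi(s_0)^2$, which is not uniformly comparable to $r^3$ when $\psi(s_0)$ is large. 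To close this one also needs $\psi(w)$ comparable to $\psi(s_0)$ on $[s_0 - r/2, s_0 + r/2]$, which follows since $\brs{K_2} \leq 1$ and $\psi \geq 1$ give $\brs{\psi_s}/\psi \leq \sqrt{2}$, hence $\psi(w) \geq e^{-\sqrt{2} r_0} \psi(s_0)$, so that the $\psi(s_0)$ factors cancel.
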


\begin{proof}[Proof of Theorem \ref{mainthm3flds}] Let $(M^3, g(t))$ be a
solution to the volume normalized $L^2$ flow as in the
statement.  The first step is to show long time existence.  We know from Theorem
\ref{existencecor} (which applies to the volume normalized flow by a simple
rescaling argument) that if the maximal existence time is $T < \infty$ then 
\begin{align*}
\limsup_{t \to T} \brs{\Rm}_{g(t)} = \infty.
\end{align*}
Choose a sequence of points $(x_i, t_i)$ such that
\begin{align*}
\gl_i := \brs{\Rm(x_i)}_{g(t_i)} = \sup_{M \times [0, t_i]} \brs{\Rm}.
\end{align*}
Let
\begin{align*}
g_i(t) :=&\ \gl_i g \left(t_i + \frac{t}{\gl_i^2} \right)
\end{align*}
By construction one notes that the solution $g_i(t)$ exists on $[-\gl_i^2 t_i,
0]$, and moreover
\begin{align} \label{blowupcurvbnd}
\sup_{M \times [-\gl_i^2 t_i, 0]} \brs{\Rm} = \brs{\Rm(x_i)}_{g_i(0)} = 1.
\end{align}
We want to take a convergent subsequence of these solutions $\til{g}_i$, and to
obtain a manifold in the limit we require a lower bound on the injectivity
radius.  
By Proposition \ref{noncollapseprop} it suffices to show a lower bound on the
lateral distance $L$.

Here we break into cases. In the case of $SO(3)$-invariant metrics on $S^3$,
Lemma \ref{noncollapsesphere} provides the required lower bound. For the
remaining cases we argue by contradiction.  Assume $L_i \to 0$. 
Note that the solutions $(M, g_i(t))$ exist, for sufficiently large $i$, on
$[-1, 0]$. It follows from (\ref{blowupcurvbnd}) and Theorem
\ref{globalsmoothing} that there is a uniform
constant $C$ such that
\begin{align*}
\sup_{M} \brs{\N \Rm}_{g_i(0)} \leq C.
\end{align*}
Say the point $x_i$ is given by $(p_i, s_i) \in \Sigma \times (a, b)$. By
integrating along a lateral geodesic one concludes
\begin{align*}
\brs{\Rm}_{g_i(0)} (p_i, s_i \pm \tau) \geq \brs{\Rm}_{g_i(0)} (p_i, s_i) - \tau
C \geq 1 - C L_i.
\end{align*}
Since the
curvatures are functions of the parameter $s$ only, and since $L_i \to 0$ we may
conclude that for sufficiently large $i$ one has
\begin{align} \label{lowercurvbnd}
\inf_{M} \brs{\Rm}_{g_i(0)} \geq \frac{1}{2}.
\end{align}
Since the volume of the unscaled metrics was fixed, for the rescaled solutions
it follows that
\begin{align} \label{lowervolumeblowup}
\Vol(g_i(0)) \geq&\ \Vol(g(0)) \gl^{\frac{3}{2}}.
\end{align}
Combining (\ref{lowercurvbnd}) and (\ref{lowervolumeblowup}) we see that
\begin{align} \label{energyblowup}
\int_M \brs{\Rm}^2_{g_i(0)} dV_{g_i(0)} \geq \frac{\Vol(g(0))}{4}
\gl_i^{\frac{3}{2}} \to \infty
\end{align}
as $i \to \infty$.  But of course $\int_M \brs{\Rm}^2(g(t)) dV \leq C$ thus
\begin{align*}
\int_M \brs{\Rm}^2_{g_i(0)} dV_{g_i(0)} =&\ \gl_i^{-\frac{1}{2}} \int_M
\brs{\Rm}^2_{g(t_i)} dV_{g(t_i)} \to 0,
\end{align*}
contradicting (\ref{energyblowup}).  It follows that there is a uniform
constant$\mu > 0$ so that
\begin{align*}
L(g_i(0)) \geq \mu > 0.
\end{align*}
We now conclude from Proposition \ref{noncollapseprop} that there
is a constant $\gd > 0$ independent of $i$ so that
\begin{align*}
\inj_{g_i(0)}(x_i) > \gd.
\end{align*}
Using this and (\ref{blowupcurvbnd}), we conclude from Theorem
\ref{wkcompactness}
that there is a subsequence of $\{(M, g_i(t), x_i) \}$ converging to a pointed
solution $(M^3_{\infty}, g_{\infty}(t), x_{\infty})$ to the volume normalized
$L^2$ flow.  By construction this
solution satisfies
\begin{align} \label{limitcurvature}
\brs{\Rm}_{g_{\infty}(0)}(x_{\infty}) = 1.
\end{align}
However, one has
\begin{align*}
\lim_{i \to \infty} \int_M \brs{\Rm}^2_{g_i(0)} dV_{g_i(0)} =&\ \lim_{i \to
\infty} \gl_i^{-\frac{1}{2}} \int_M
\brs{\Rm}^2_{g(t_i)} dV_{g(t_i)}\\
\leq&\ \lim_{i \to \infty} \gl_i^{-\frac{1}{2}} \int_M \brs{\Rm}^2_{g(0)}
dV_{g(0)}\\
=&\ 0.
\end{align*}
It follows by Fatou's Lemma that
\begin{align*}
\int_{M_{\infty}} \brs{\Rm}^2_{g_{\infty}(0)} dV_{g_{\infty}(0)} = 0,
\end{align*}
contradicting (\ref{limitcurvature}).  It follows that the curvature is bounded
on finite time intervals, and therefore the solution exists on $[0, \infty)$. To
show the uniform curvature bound, one can repeat the argument by
contradiction above, again blowing up around a sequence of points realizing the
spacetime maximum of curvature.

Turning now to the convergence statement for $SO(3)$-invariant metrics, we note
that we have shown that the
solutions under consideration are nonsingular in the sense of Definition
\ref{nonsingdef}.  We
must consider the different possibilities for the limiting behavior given by
Theorem \ref{nonsingularthm}. First let us globally rescale the solution to
(\ref{vnflow}) so that
\begin{align*}
\sup_{M \times [0, \infty)} \brs{\Rm} \leq 1.
\end{align*}
Using Lemma \ref{noncollapsesphere} and Proposition \ref{noncollapseprop} we
conclude that
\begin{align*}
\inf_{M \times [0, \infty)} \inj_g \geq \gd > 0.
\end{align*}
It follows that the first and third possibilities of Theorem
\ref{nonsingularthm}
are impossible, therefore we must have subsequential convergence to a critical
metric.
\end{proof}

\section{Estimates of First Laplace Eigenvalue}\label{dirichletgrowth}

The main purpose of this section is to prove Theorem \ref{glestimate} below,
which is
an estimate on the decay of the first Laplace eigenvalue of the evolving
metric along a solution to the $L^2$ flow.  The strategy of the proof is to take
a
test function for the functional $\EE$ at
a certain forward time in the flow, then push it back to the initial time using
the backwards biharmonic heat flow. 

\begin{thm} \label{glestimate} There exist universal constants $C > 0$,
$\bar{\gl} > 0$
such that given $A
> 1$, there exists $\ge > 0,$ so that if $(M^n, g_t)$ is a solution to
the $L^2$ flow on a compact manifold $M^n$, $n \leq 4$, on a time interval $[0,
T]$, $T \leq 1$, satisfying
\begin{enumerate}
\item{$\mathcal F(g_0) \leq \ge$}
\item{For all $f \in C^1(M)$, all $t \in [0, T]$, $\nm{f}{L^4}^2 \leq A \left(
\nm{\N f}{L^2}^2 + \nm{f}{L^2}^2 \right)$}
\item{$\gl(g_T) \leq \bar{\gl}$.}
\end{enumerate}
Then
\begin{align*}
 \gl(g_0) \leq&\ 2 \gl(g_T) + C A^2 \ge^{\frac{1}{2}}.
\end{align*}
\end{thm}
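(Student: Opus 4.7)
The plan is to construct a test function at time $0$ by propagating a first Laplace eigenfunction at time $T$ backward in $t$ via the time-dependent bi-Laplacian heat flow, and to bound the growth of the Dirichlet quotient $\EE$ along this propagation using the smallness of $\grad\FF$ provided by the monotonicity $\FF(g_t)\leq\FF(g_0)\leq\ge$. Specifically, let $\phi_T$ be a first eigenfunction of $-\Delta_{g_T}$ with $\int\phi_T\,dV_T=0$ and $\int\phi_T^2\,dV_T=1$; set $\tau=T-t$ and $\til g(\tau)=g(T-\tau)$, so that $\partial_\tau\til g=\grad\FF$; and solve the well-posed forward-parabolic initial value problem
\begin{equation*}
\partial_\tau\phi+\Delta^2_{\til g(\tau)}\phi=0,\qquad \phi(0)=\phi_T,
\end{equation*}
on $[0,T]$, setting $\phi_0:=\phi(T)$. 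Writing $N(\tau)=\int|\N\phi|^2\,dV_{\til g}$ and $D(\tau)=\int\phi^2\,dV_{\til g}$, the standard variation formulas for $dV$ and $g^{ij}$ under $\partial_\tau\til g=\grad\FF$ yield
\begin{equation*}
\EE'(\tau)=-\frac{2}{D}\left[\int|\N\Delta\phi|^2-\EE\int(\Delta\phi)^2\right]+E_\tau,
\end{equation*}
where $E_\tau$ collects all $\grad\FF$-dependent terms. Two applications of Cauchy--Schwarz (to $\int(\Delta\phi)^2=-\int\langle\N\phi,\N\Delta\phi\rangle$ and to $N=-\int\phi\Delta\phi$) show the bracketed dissipation is nonnegative, so on a fixed metric $\EE$ would be nonincreasing.

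The heart of the proof is to show $\int_0^T|E_\tau|\,d\tau\leq CA^2\ge^{1/2}$. Schematically $|E_\tau|\leq CD^{-1}\|\grad\FF\|_{L^2}\bigl(\|\N\phi\|_{L^4}^2+\EE\|\phi\|_{L^4}^2\bigr)$. Applying hypothesis (2) to $\phi$ and, via Kato's inequality, componentwise to $\N\phi$ gives $\|\N\phi\|_{L^4}^2\leq A(\|\N^2\phi\|_{L^2}^2+N)$ and $\|\phi\|_{L^4}^2\leq A(N+D)$. Bochner's formula combined with the smallness of $\|\Rm\|_{L^2}$ (from hypothesis (1) together with monotonicity of $\FF$ and hypothesis (2)) yields $\|\N^2\phi\|_{L^2}^2\leq 2\|\Delta\phi\|_{L^2}^2+CN$, and Young's inequality applied to the Cauchy--Schwarz bound $\|\Delta\phi\|_{L^2}^2\leq\|\N\phi\|_{L^2}\|\N\Delta\phi\|_{L^2}$ allows a small multiple of $\|\N\Delta\phi\|_{L^2}^2/D$ to be absorbed into the dissipation. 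Finally $\int_0^T\|\grad\FF\|_{L^2}^2\,dt=\FF(g_0)-\FF(g_T)\leq\ge$, so Cauchy--Schwarz and $T\leq 1$ give $\int_0^T\|\grad\FF\|_{L^2}\,d\tau\leq\ge^{1/2}$; the estimate then closes provided an a priori bound on $\EE$ holds over $[0,T]$, which is obtained by a continuity argument starting from $\EE(\phi_T,g_T)=\lambda(g_T)\leq\bar\lambda$ with $\bar\lambda$ sufficiently small (hypothesis (3)).

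To produce an admissible zero-mean test function, I next correct the mean: the bi-Laplacian integrates to zero, so
\begin{equation*}
\frac{d}{d\tau}\int\phi\,dV=\tfrac{1}{2}\int\tr(\grad\FF)\phi\,dV=O(\ge^{1/2}),
\end{equation*}
which forces $|\bar\phi_0|^2\Vol(g_0)\leq\frac{1}{2}D(T)$ for $\ge$ small enough, and hence $\hat\phi_0:=\phi_0-\bar\phi_0$ satisfies $\int\hat\phi_0^2\,dV_0\geq\frac{1}{2}D(T)$. Since $\hat\phi_0$ has zero mean, $\lambda(g_0)\leq\EE(\hat\phi_0,g_0)\leq 2\EE(\phi_0,g_0)\leq 2\lambda(g_T)+CA^2\ge^{1/2}$, which is the theorem (the factor $2$ in the conclusion being exactly the loss from the mean correction). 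The main obstacle is the error control: $E_\tau$ involves the second-order quantity $\|\N^2\phi\|_{L^2}^2$ through the $L^4$ Sobolev embedding, which must be absorbed into the third-order dissipation $\|\N\Delta\phi\|_{L^2}^2/D$ without destroying the monotonicity $\EE'\leq E_\tau$; this absorption is precisely where the restriction $n\leq 4$ is essential, since for $n\geq 5$ the embedding $H^1\hookrightarrow L^4$ fails.
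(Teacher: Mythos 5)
Your overall strategy is the same as the paper's: propagate a first eigenfunction of $g_T$ backward in $t$ via the bi-Laplacian heat flow of the time-dependent metric, show that the Dirichlet quotient $\EE$ grows only by $O(A^2\ge^{1/2})$ by absorbing the error terms (estimated via the $H^1\hookrightarrow L^4$ embedding) into the dissipation $\|\N\Delta\phi\|_{L^2}^2/D$, then run a continuity/Gronwall argument starting from $\EE(g_T,\phi_T)=\gl(g_T)\leq\bar\gl$.

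The genuine departure is in how the zero-mean constraint is handled, and here there is a gap. The paper does not use the pure bi-Laplacian flow; it adds a zeroth-order term and solves
\begin{align*}
\frac{\del}{\del\tau}\phi = -\gD^2_{g_\tau}\phi + \tfrac{1}{2}\phi\,\tr_g E, \qquad \phi(T)=\phi_T,
\end{align*}
where $E=\dt g$. This term is chosen precisely so that $\del_\tau\int\phi\,dV=0$, which makes $\phi_\tau$ an \emph{exactly} admissible test function for $\gl(g_\tau)$ at every time, requiring only that $\phi_\tau\not\equiv 0$ (which follows from backward uniqueness). You instead use the uncorrected flow $\del_\tau\phi+\Delta^2\phi=0$, let the mean drift, and try to subtract it off at $\tau=T$. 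Your claimed bound $\bar\phi_0^2\Vol(g_0)\leq\tfrac12 D(T)$ does not follow from the absolute estimate $|\bar\phi_0|\Vol(g_0)=O(\ge^{1/2})$: one would also need a lower bound on $\Vol(g_0)$ (not a hypothesis of the theorem — and recall the theorem should behave reasonably under rescaling) and, more seriously, a lower bound on $D(T)=\|\phi_0\|_{L^2}^2$. The forward-in-$\tau$ bi-Laplacian equation is dissipative, so $D(\tau)$ is (up to the $\tr E$ correction) monotone decreasing and nothing in your argument prevents $D(T)$ from being arbitrarily small relative to $\ge/\Vol(g_0)$; the decay rate is governed by $\|\Delta\phi\|_{L^2}^2/D$, which is not controlled a priori. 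One could try to close this with an additional estimate (e.g., bounding $\int_0^T\|\Delta\phi\|_{L^2}^2/D\,d\tau$ using $\EE\leq 4\bar\gl$ together with the time-integrated dissipation), but you do not do this, and the paper's zeroth-order modification sidesteps the issue entirely and more cleanly. The remainder of your sketch — the error estimates, the Bochner step with $\|\Rm\|_{L^2}$ small, the absorption into the dissipation, the Gronwall closure, and the observation that $n\leq 4$ is what makes $H^1\hookrightarrow L^4$ available — matches the paper.
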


\begin{rmk} The second hypothesis above is of course just the usual $L^2$
Sobolev inequality when $n = 4$, but we have restated it here to unify the
discussions for all dimensions $n \leq 4$, which will simplify the proof below.
\end{rmk}

Without further comment we fix throughout this section a solution $(M^n, g_t)$
satisfying the hypotheses above, the notation of Theorem \ref{glestimate}, and
the notation in the following paragraph.
As a
notational convenience we set
\begin{align*}
E := \dt g.
\end{align*}
Furthermore, let $\tau = T - t$ and define $\phi$ on $M \times [0, T]$ via
\begin{gather} \label{phiflow}
\begin{split}
\frac{\del}{\del \tau} \phi =&\ - \gD^2_{g_{\tau}} \phi + \frac{1}{2} \phi \tr_g
E\\
\phi(T) =&\ \phi_T.
\end{split}
\end{gather}
To clarify, as written the first equation above is parabolic in the backwards
time parameter $\tau$,
therefore we specify a \emph{final} value and solve backwards in time. As this
is a linear parabolic equation defined with respect to a smooth one-parameter
family of metrics, the existence of $\phi$ on the whole interval $[0, T]$
follows from standard estimates for linear parabolic equations. In deriving the
estimates below we adopt the convention that $C$ always denotes a universal
constant, which may change from line to line. First we observe that the
$L^1$ norm of $\phi$ is fixed for all times, which is the purpose of inserting
the zeroth order term into (\ref{phiflow}).  

\begin{lemma} \label{phiflowfixing} One has
\begin{align*}
\frac{\del}{\del \tau} \int_M \phi dV =&\ 0
\end{align*}
\begin{proof} We directly compute
\begin{align*}
\frac{\del}{\del \tau} \int_M \phi dV = \int_M \left( \frac{\del}{\del \tau}
\phi - \phi \frac{1}{2} \tr_g E \right) dV =  \int_M - \gD^2 \phi dV = 0.
\end{align*}
\end{proof}
\end{lemma}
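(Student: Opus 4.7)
The plan is a direct differentiation under the integral sign, using the fact that the zeroth order term $\tfrac{1}{2}\phi\,\tr_g E$ in (\ref{phiflow}) was inserted precisely to cancel the contribution from the varying volume form. There is no real analytic content here beyond bookkeeping — the lemma is essentially a tautology built into the definition of the backward flow — so I do not anticipate any obstacle.

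First I would record the variation of the volume form with respect to $\tau$. Since $\tau = T - t$, one has $\frac{\del g}{\del \tau} = -E$, and the standard identity $\frac{\del}{\del s} dV_{g(s)} = \tfrac{1}{2}\tr_{g(s)}\bigl(\frac{\del g}{\del s}\bigr)\,dV_{g(s)}$ gives
\[
\frac{\del}{\del \tau} dV_{g_\tau} \;=\; -\tfrac{1}{2}\,(\tr_g E)\,dV_{g_\tau}.
\]
Differentiating under the integral and substituting the evolution equation (\ref{phiflow}) for $\phi$, the two copies of $\tfrac{1}{2}\phi\,\tr_g E$ cancel, yielding
\[
\frac{\del}{\del \tau}\int_M \phi\,dV \;=\; \int_M\!\Bigl(\tfrac{\del\phi}{\del\tau} - \tfrac{1}{2}\phi\,\tr_g E\Bigr)dV \;=\; \int_M \bigl(-\gD_{g_\tau}^2\phi\bigr)\,dV.
\]

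To close, I would invoke the divergence theorem: on the closed manifold $M$, writing $\gD^2\phi = \divg\bigl(\grad(\gD\phi)\bigr)$ shows the integral vanishes. This establishes the claim and, as a bonus, confirms the design principle that the $L^1$ norm $\int_M \phi\,dV$ is conserved along (\ref{phiflow}), which will be exploited in the eigenvalue comparison to follow.
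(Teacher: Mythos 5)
Your proposal is correct and coincides with the paper's proof: differentiate under the integral, use $\frac{\del}{\del\tau}dV = -\tfrac{1}{2}(\tr_g E)\,dV$ so that the zeroth order term in (\ref{phiflow}) cancels the volume-form contribution, and kill $\int_M \gD^2\phi\,dV$ by the divergence theorem on the closed manifold. No issues.
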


\noindent In the next two lemmas we
derive differential inequalities for the
evolutions of $\nm{\phi}{L^2}$ and $\nm{\N \phi}{L^2}^2$.

\begin{lemma} \label{Sgrowthlemma5} One has
\begin{align*}
\frac{\del}{\del \tau} \nm{\phi}{L^2}^2 =&\ - 2 \nm{\gD \phi}{L^2}^2 + \int_M
\left[ \frac{1}{2} \tr_g E \phi^2 \right] dV
\end{align*}
\begin{proof} We compute
\begin{align*}
\frac{\del}{\del \tau} \nm{\phi}{L^2}^2 =&\ 2 \int_M \phi \left( - \gD^2 \phi +
\phi \frac{1}{2} \tr_g E\right) + \int_M
\left[ - \frac{1}{2} \tr_g E \phi^2 \right] dV\\
=&\ - 2 \nm{\gD \phi}{L^2}^2 + \int_M \left[ \frac{1}{2} \tr_g E \phi^2 \right]
dV.
\end{align*}
\end{proof}
\end{lemma}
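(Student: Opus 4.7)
The plan is to compute $\frac{\partial}{\partial \tau} \int_M \phi^2 \, dV$ directly via the product rule, substitute the defining PDE (\ref{phiflow}) into the $\partial_\tau \phi$ factor, and then integrate the resulting biharmonic term by parts twice on the closed manifold $M$. Since $\tau = T - t$, the standard formula for the variation of the volume form along the flow gives $\frac{\partial}{\partial \tau} dV = -\frac{1}{2}(\tr_g E)\, dV$, which is where the only sign subtlety enters.

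Carrying this out, the product rule produces
\begin{align*}
\frac{\partial}{\partial \tau} \int_M \phi^2 \, dV = \int_M 2\phi \left( -\gD^2 \phi + \tfrac{1}{2}\phi \tr_g E \right) dV - \int_M \tfrac{1}{2}(\tr_g E)\phi^2 \, dV.
\end{align*}
The two zeroth-order contributions then combine: the $\phi$-equation supplies $+(\tr_g E)\phi^2$, while the volume-form variation contributes $-\tfrac{1}{2}(\tr_g E)\phi^2$, leaving precisely the stated $+\tfrac{1}{2}(\tr_g E)\phi^2$. This cancellation is not accidental -- the $\tfrac{1}{2}\phi \tr_g E$ zeroth-order term in (\ref{phiflow}) was inserted for exactly this bookkeeping purpose, as is already visible in the conservation of $\int_M \phi\, dV$ from Lemma \ref{phiflowfixing}.

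For the principal term, two integrations by parts on the closed manifold $M$ yield $-2\int_M \phi\, \gD^2 \phi \, dV = -2\int_M (\gD \phi)^2 \, dV = -2\nm{\gD \phi}{L^2}^2$, with no boundary contributions to worry about. The entire argument is routine differentiation, substitution, and integration by parts, so there is no genuine obstacle; the only thing that requires care is tracking the sign flip from differentiating in the backward time parameter $\tau$ rather than $t$.
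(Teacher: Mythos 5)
Your proof is correct and matches the paper's own argument exactly: both apply the product rule, substitute the PDE (\ref{phiflow}), use $\frac{\partial}{\partial\tau}\,dV = -\tfrac{1}{2}(\tr_g E)\,dV$, and integrate the biharmonic term by parts twice on the closed manifold. The only thing you add is the explanatory remark tying the sign bookkeeping to Lemma \ref{phiflowfixing}, which the paper leaves implicit.
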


\begin{lemma} \label{Sgrowthlemma10} There exists a universal constant $C$ such
that
\begin{align} \label{H1est510}
\frac{\del}{\del \tau} \nm{\N \phi}{L^2}^2 =&\  - 2 \nm{\N \gD \phi}{L^2}^2 -
\int_M \left[ \left< E, \N \phi \otimes \N
\phi \right> + \tr_g E \left( \phi \gD \phi + \frac{1}{2} \brs{\N \phi}^2
\right) \right] dV.
\end{align}
\begin{proof} A direct calculation shows that
\begin{align*}
\frac{\del}{\del \tau} \nm{\N \phi}{L^2}^2 =&\ \int_M \left[ \left<- E, \N \phi
\otimes\N \phi \right> + 2 \left< \N \left( - \gD^2 \phi + \frac{1}{2} \phi
\tr_g E \right), \N \phi \right> - \frac{1}{2}
\tr_g E \brs{\N \phi}^2 \right] dV\\
=&\ - 2 \nm{\N \gD \phi}{L^2}^2 - \int_M \left[ \left< E, \N \phi \otimes \N
\phi \right> + \tr_g E \left( \phi \gD \phi + \frac{1}{2} \brs{\N \phi}^2
\right) \right] dV.
\end{align*}
\end{proof}
\end{lemma}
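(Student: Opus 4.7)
My plan is to compute $\frac{\del}{\del \tau} \nm{\N \phi}{L^2}^2$ directly, separating the three contributions that arise from the time-dependent ingredients of the integrand, and then cleaning up with integration by parts. This will mirror the argument of Lemma \ref{Sgrowthlemma5}, with the extra wrinkle that the integrand now depends on the metric through the inverse $g^{ij}$.

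Writing $\nm{\N \phi}{L^2}^2 = \int_M g^{ij} \N_i \phi \N_j \phi \, dV$, differentiating in $\tau$ produces three pieces. First, since $\frac{\del}{\del \tau} g_{ij} = - E_{ij}$ (recall $\tau = T - t$ and $E = \dt g$), the variation of the inverse metric pairs against $\N\phi \otimes \N\phi$ to yield the $\langle E, \N \phi \otimes \N \phi \rangle$ contribution. Second, the variation of $\phi$ governed by (\ref{phiflow}) contributes $2 \int_M \langle \N(-\gD^2 \phi + \tfrac{1}{2} \phi \tr_g E), \N \phi \rangle \, dV$. Third, the variation of the volume form $\frac{\del}{\del \tau} dV = - \tfrac{1}{2} \tr_g E \cdot dV$ contributes $- \int_M \tfrac{1}{2} \tr_g E \brs{\N \phi}^2 \, dV$.

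To finish, I would simplify the middle contribution by integration by parts. Two integrations by parts convert $\int_M \langle \N \gD^2 \phi, \N \phi \rangle \, dV$ into $\nm{\N \gD \phi}{L^2}^2$, supplying the leading $-2 \nm{\N \gD \phi}{L^2}^2$ term; a single integration by parts turns $\int_M \langle \N(\phi \tr_g E), \N \phi \rangle \, dV$ into $-\int_M \phi \tr_g E \cdot \gD \phi \, dV$. Collecting everything yields the stated identity.

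There is no serious obstacle here, since the statement is essentially a bookkeeping exercise. The only steps requiring real care are keeping the signs consistent when passing from $t$ to $\tau$ in the metric variation, and correctly handling the two integrations by parts needed for the biharmonic piece (as opposed to the single integration by parts used in Lemma \ref{Sgrowthlemma5}, where $\phi$ appeared directly and not through its gradient).
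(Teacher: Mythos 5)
Your proposal is correct and follows the paper's proof exactly: both decompose $\frac{\del}{\del\tau}\nm{\N\phi}{L^2}^2$ into the three variations (of the inverse metric $g^{ij}$, of $\phi$ via (\ref{phiflow}), and of the volume form $dV$) and then apply the same two integrations by parts to the biharmonic piece and one to the $\phi \tr_g E$ piece. One small remark on the first variation, for you and for the paper alike: since $\frac{\del}{\del\tau} g_{ij} = -E_{ij}$ one has $\frac{\del}{\del\tau} g^{ij} = +E^{ij}$, so this contribution should appear as $+\left<E, \N\phi\otimes\N\phi\right>$ rather than with the minus sign written in the lemma; the discrepancy is harmless downstream because Proposition \ref{glev} estimates this term only in absolute value.
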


\noindent We are ready now to derive a fundamental differential inequality for
the evolution of $\mathcal E$.

\begin{prop} \label{glev} There exists a universal constant $C$ such that if
$\ge \leq \frac{1}{2 A}$, one has
\begin{gather} \label{glineq}
\begin{split}
\frac{\del}{\del \tau} \EE(g, \phi) \leq&\ - \frac{\nm{\N \gD
\phi}{L^2}^2}{\nm{\phi}{L^2}^2}\\
&\ + C \left[ \EE^3 + A \nm{E}{L^2} \EE^2 + \left(
A^2 \nm{E}{L^2}^2 + A
\nm{E}{L^2} \right) \left( 1 + \mathcal E \right)\right].
\end{split}
\end{gather}
\begin{proof} Applying Lemmas \ref{Sgrowthlemma5} and \ref{Sgrowthlemma10}
yields
\begin{align*}
\frac{\del}{\del \tau} \mathcal E(\phi_t, g_t) =&\ - 2 \frac{\nm{\N \gD
\phi}{L^2}^2}{\nm{\phi}{L^2}^2} - \frac{ \int_M \left[ \left< E, \N \phi \otimes
\N
\phi \right> + \tr_g E \left( \phi \gD \phi + \frac{1}{2} \brs{\N \phi}^2
\right) \right] dV }{\nm{\phi}{L^2}^2}\\
&\ + 2 \frac{\nm{\N
\phi}{L^2}^2\nm{\gD \phi}{L^2}^2}{\nm{\phi}{L^2}^4} - \frac{\nm{\N \phi}{L^2}^2
\int_M \left[ \frac{1}{2} \tr_g E \phi^2 \right]}{\nm{\phi}{L^2}^4}.
\end{align*}
We need to estimate each of these terms.  First we have
\begin{align*}
\brs{\int_M \left[ \left< - E, \N \phi \otimes \N \phi \right> + \frac{1}{2}
\tr_g E
\brs{\N \phi}^2 \right] dV} \leq&\ C \int_M \brs{E} \brs{\N \phi}^2\\
\leq&\ C \nm{E}{L^2} \nm{\N \phi}{L^4}^2\\
\leq&\ C A \nm{E}{L^2} \left( \nm{\N^2 \phi}{L^2}^2 + \nm{\N \phi}{L^2}^2
\right).
\end{align*}
Integrating by parts and applying the Sobolev inequality yields
\begin{align*}
 \nm{\N^2 \phi}{L^2}^2 \leq&\ \nm{\N \phi}{L^2} \nm{\N \gD \phi}{L^2} + A
\nm{\Rm}{L^2} \left( \nm{\N^2 \phi}{L^2}^2 + \nm{\N \phi}{L^2}^2 \right).
\end{align*}
Choosing $\nm{\Rm}{L^2} \leq \frac{1}{2A}$ yields
\begin{align*}
 \nm{\N^2 \phi}{L^2}^2 \leq&\ C \left( \nm{\N \phi}{L^2} \nm{\N \gD \phi}{L^2} +
\nm{\N \phi}{L^2}^2 \right),
\end{align*}
and hence
\begin{align*}
\brs{\int_M \left[ \left< - E, \N \phi \otimes \N \phi \right> + \frac{1}{2}
\tr_g E
\brs{\N \phi}^2 \right] dV} \leq&\ C A \nm{E}{L^2} \left( \nm{\N \phi}{L^2}
\nm{\N \gD \phi}{L^2} + \nm{\N \phi}{L^2}^2 \right)\\
\leq&\ \frac{1}{3} \nm{\N \gD \phi}{L^2}^2 + C \left( A^2 \nm{E}{L^2}^2 + A
\nm{E}{L^2} \right) \nm{\N \phi}{L^2}^2.
\end{align*}
Next we have
\begin{align*}
\brs{\int_M \tr_g E \phi \gD \phi} \leq&\ \nm{E}{L^2} \nm{\phi}{L^4} \nm{\gD
\phi}{L^4}\\
\leq&\ \nm{E}{L^2} A \left( \nm{\N \phi}{L^2} + \nm{\phi}{L^2} \right) \left(
\nm{\N \gD \phi}{L^2} + \nm{\gD \phi}{L^2} \right)\\
\leq&\ C \nm{E}{L^2} A \left( \nm{\N \phi}{L^2} + \nm{\phi}{L^2} \right) \left(
\nm{\N \gD \phi}{L^2} + \nm{\N \phi}{L^2} \right)\\
\leq&\ \frac{1}{3} \nm{\N \gD \phi}{L^2}^2 + CA \nm{E}{L^2} \left(1 + A
\nm{E}{L^2} \right) \left( \nm{\N \phi}{L^2}^2 + \nm{\phi}{L^2}^2 \right)
\end{align*}
Also, we can estimate
\begin{align*}
 2 \frac{\nm{\N \phi}{L^2}^2 \nm{\gD \phi}{L^2}^2}{\nm{\phi}{L^2}^4} \leq&\ 2
\frac{\nm{\N \phi}{L^2}^{3} \nm{\N \gD \phi}{L^2}}{\nm{\phi}{L^2}^4}\\
=&\ 2 \mathcal E^{\frac{3}{2}} \frac{\nm{\N \gD \phi}{L^2}}{\nm{\phi}{L^2}}\\
\leq&\ \frac{1}{3} \frac{\nm{\N \gD \phi}{L^2}^2}{\nm{\phi}{L^2}^2} + 3 \mathcal
E^3.
\end{align*}
Finally we have
\begin{align*}
 \frac{\nm{\N \phi}{L^2}^2 \int_M \left[ \frac{1}{2} \tr_g E \phi^2
\right]}{\nm{\phi}{L^2}^4} \leq&\ C \frac{\nm{\N \phi}{L^2}^2 \nm{E}{L^2}
\nm{\phi}{L^4}^2}{\nm{\phi}{L^2}^4}\\
\leq&\ C A \frac{\mathcal E \nm{E}{L^2}}{\nm{\phi}{L^2}^2} \left(\nm{\N
\phi}{L^2}^2 + \nm{\phi}{L^2}^2 \right)\\
\leq&\ C A \nm{E}{L^2} \left( \EE^2 + \EE \right).
\end{align*}
Combining these estimates gives the result.
\end{proof}
\end{prop}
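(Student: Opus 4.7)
The plan is to differentiate the quotient $\EE(g,\phi) = \nm{\N\phi}{L^2}^2/\nm{\phi}{L^2}^2$ directly, inserting the two evolution equations from Lemmas \ref{Sgrowthlemma5} and \ref{Sgrowthlemma10}. The main principal term $-2\nm{\N\gD\phi}{L^2}^2/\nm{\phi}{L^2}^2$ has to do double duty: one copy yields the good term on the right hand side of \eqref{glineq}, while a second copy of order $\nm{\N\gD\phi}{L^2}^2$ must be used as a reservoir to absorb the various error terms involving the metric variation $E = \dt g$. All other terms must be expressed purely in terms of $\EE$, $\nm{E}{L^2}$, and $A$.

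First I would apply the quotient rule to obtain
\begin{align*}
\frac{\del}{\del\tau}\EE \;=\;&\ -2\frac{\nm{\N\gD\phi}{L^2}^2}{\nm{\phi}{L^2}^2} \;-\; \frac{\int_M\bigl[\langle E,\N\phi\otimes\N\phi\rangle + \tr_g E\bigl(\phi\gD\phi + \tfrac12\brs{\N\phi}^2\bigr)\bigr]dV}{\nm{\phi}{L^2}^2}\\
&\ +\;2\frac{\nm{\N\phi}{L^2}^2\nm{\gD\phi}{L^2}^2}{\nm{\phi}{L^2}^4} \;-\; \frac{\nm{\N\phi}{L^2}^2\int_M\tfrac12\tr_g E\,\phi^2\,dV}{\nm{\phi}{L^2}^4}.
\end{align*}
The first two error terms are estimated by pulling out $\nm{E}{L^2}$ via Cauchy--Schwarz and then bounding the $L^4$ norms of $\N\phi$ and $\phi$ through hypothesis (2). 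For the quadratic-in-$\N\phi$ term one obtains $\nm{\N\phi}{L^4}^2 \leq A(\nm{\N^2\phi}{L^2}^2 + \nm{\N\phi}{L^2}^2)$, and I would then interpolate $\nm{\N^2\phi}{L^2}^2 \leq \nm{\N\phi}{L^2}\nm{\N\gD\phi}{L^2} + (\text{Bochner terms})$, where the Bochner terms involve $\brs{\Rm}$ and can be controlled by the same Sobolev inequality together with $\nm{\Rm}{L^2}\leq\ge \leq \frac{1}{2A}$, allowing us to absorb the resulting $A\nm{\Rm}{L^2}\nm{\N^2\phi}{L^2}^2$ back into the left side.

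For the $\phi\gD\phi$ term I would use $\nm{\phi}{L^4}\nm{\gD\phi}{L^4}$, again invoking the Sobolev inequality on both factors and using the elliptic bound $\nm{\gD\phi}{L^2} \leq C\nm{\N\phi}{L^2}$ (via integration by parts) to replace one factor cleanly. Young's inequality then splits off a small multiple of $\nm{\N\gD\phi}{L^2}^2$ with a coefficient of our choosing. Similarly, the last term $\nm{\N\phi}{L^2}^2 \int \tr_g E\,\phi^2$ is dominated by $A\nm{E}{L^2}\EE(\EE+1)$ after an $L^4$ bound on $\phi$. The only purely algebraic contribution is the gradient-of-denominator piece $2\nm{\N\phi}{L^2}^2\nm{\gD\phi}{L^2}^2/\nm{\phi}{L^2}^4$: here I would use $\nm{\gD\phi}{L^2}^2 \leq \nm{\N\phi}{L^2}\nm{\N\gD\phi}{L^2}$ to rewrite this as $2\EE^{3/2}\nm{\N\gD\phi}{L^2}/\nm{\phi}{L^2}$, then apply Young to get $\tfrac13\nm{\N\gD\phi}{L^2}^2/\nm{\phi}{L^2}^2 + 3\EE^3$.

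Choosing the Young splittings so the three absorbed pieces each take $\tfrac13$ of the second copy of $\nm{\N\gD\phi}{L^2}^2/\nm{\phi}{L^2}^2$ and summing the remainders yields exactly \eqref{glineq}. The main obstacle I anticipate is bookkeeping: in particular, making sure the Bochner-type term arising when commuting $\N^2$ through the integration by parts for $\nm{\N^2\phi}{L^2}^2$ can be absorbed cleanly, which is where the smallness assumption $\ge \leq \frac{1}{2A}$ plays an essential role. Everything else is a chain of standard interpolations constrained to produce only the five term types appearing on the right-hand side.
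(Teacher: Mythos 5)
Your proposal follows essentially the same route as the paper: you apply the two evolution lemmas, expand $\EE$ by the quotient rule, estimate the $E$-dependent error terms through Cauchy--Schwarz, the $L^4$ Sobolev hypothesis, and the integration-by-parts interpolation for $\nm{\N^2\phi}{L^2}^2$, absorb the resulting commutator contribution using $\ge\le\tfrac1{2A}$, handle the denominator term through $\nm{\gD\phi}{L^2}^2\le\nm{\N\phi}{L^2}\nm{\N\gD\phi}{L^2}$, and close with three Young splittings each taking $\tfrac13$ of one copy of $\nm{\N\gD\phi}{L^2}^2/\nm{\phi}{L^2}^2$. This is exactly the argument in the paper.

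However, one step as you state it is incorrect: there is no ``elliptic bound'' $\nm{\gD\phi}{L^2}\le C\nm{\N\phi}{L^2}$. That inequality is false in general (take a high-frequency mode $\phi$ on a flat torus: $\nm{\gD\phi}{L^2}/\nm{\N\phi}{L^2}$ grows without bound). What integration by parts actually gives is $\nm{\gD\phi}{L^2}^2\le\nm{\N\phi}{L^2}\nm{\N\gD\phi}{L^2}$, hence $\nm{\gD\phi}{L^2}\le C\bigl(\nm{\N\phi}{L^2}+\nm{\N\gD\phi}{L^2}\bigr)$ by Young. This extra factor of $\nm{\N\gD\phi}{L^2}$ is what the paper carries through the $\tr_g E\,\phi\gD\phi$ estimate, and the subsequent Young absorption handles it. If you substitute your claimed bound literally you still produce the right shape of final terms, but the intermediate inequality you invoked does not hold, so the derivation as written has a gap at that point. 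Replacing it with the correct interpolation restores the argument.
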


\begin{proof}[Proof of Theorem \ref{glestimate}] Let $\phi_T$ denote an
eigenfunction for the first Laplace eigenvalue of $g_T$, and let $\phi_t$, $t
\in [0, T]$ denote the solution to (\ref{phiflow}) with $\phi_T$ as the value at
$t
= T$.  Note that $\int_M \phi_T dV_T = 0$, and by Lemma \ref{phiflowfixing} one
has $\int_M \phi_t dV_t = 0$ for any $0 \leq t \leq T$.    Also, note that
$\phi_t$ does not vanish identically for any $t$.  Indeed, if there was a $t$
such that $\phi_t \equiv 0$, we note that $\phi_s \equiv 0$ is the unique
solution to (\ref{phiflow}), forcing $\phi_T \equiv 0$, a contradiction.  This
implies that for any $t$, $\phi_t$ is a valid test function for estimating
$\gl(g_t)$.

Provided $\bar{\gl}$
is chosen sufficiently small with respect to universal constants, as long as
$\mathcal E(g, \phi) \leq 4 \bar{\gl}$ and $\ge \leq \frac{1}{2 A}$ we conclude
from Proposition \ref{glev} that
\begin{align*}
 \frac{\del}{\del \tau} \EE(g, \phi) \leq&\ C \left(A^2 \nm{E}{L^2}^2 + A
\nm{E}{L^2} + \bar{\gl}^2 \right) \mathcal E + C \left( A^2 \nm{E}{L^2}^2 + A
\nm{E}{L^2} \right)
\end{align*}
Applying Gronwall's inequality yields, for any $t \leq T$, as long as $\sup_{[t,
T]} \EE(g, \phi) \leq 2 \bar{\gl}$,
\begin{align*}
 \EE(g_t, \phi_t) \leq&\ \exp \left[ C \int_t^T \left( A^2 \nm{E}{L^2}^2 + A
\nm{E}{L^2} + \bar{\gl}^2 \right) dt \right] \left( \int_t^T \left( A^2
\nm{E}{L^2}^2 + A \nm{E}{L^2} \right) + \mathcal E(g_T, \phi_T) \right)
\end{align*}
 Next we can estimate
\begin{align*}
 C \int_t^T A^2 \nm{E}{L^2}^2 \leq C A^2 \int_0^T \nm{E}{L^2}^2 \leq C A^2 \ge
\leq \frac{1}{3} \ln 2
\end{align*}
for $\ge$ chosen sufficiently small with respect to $A$ and $\gl(g_T)$. 
Likewise we have
\begin{align*}
 C \int_t^T A \nm{E}{L^2} \leq&\ C A \left( \int_0^T \nm{E}{L^2}^2
\right)^{\frac{1}{2}} \left( \int_0^T \right)^{\frac{1}{2}} \leq C A
\ge^{\frac{1}{2}} T^{\frac{1}{2}} \leq \frac{1}{3} \ln 2
\end{align*}
provided $\ge$ is chosen sufficiently small with respect to $A$ and $T \leq 1$.
Finally we estimate
\begin{align*}
 C \int_t^T \bar{\gl}^2 \leq&\ C T \bar{\gl}^2 \leq \frac{1}{3} \ln 2
\end{align*}
provided $\bar{\gl}$ is chosen sufficiently small with respect to universal
constants and $T \leq 1$.
Combining these estimates yields first of all that for any time $0 \leq t \leq
T$,
\begin{align*}
 \EE(g_t, \phi_t) \leq&\ 2\left(\EE(g_T, \phi_T) + C A^2 \ge^{\frac{1}{2}}
\right) \leq 4 \bar{\gl},
\end{align*}
i.e. the condition $\EE(g_t, \phi_t) \leq 4 \bar{\gl}$ holds on $[0, T]$.  Note
that this last inequality requires that we choose $\ge$ small with respect to
$A$ and $\bar{\gl}$, but of course $\bar{\gl}$ is universal.
Hence
\begin{align*}
 \gl(g_0) =&\ \inf_{\{ \phi | \int_M \phi dV = 0 \}} \frac{\nm{\N
\phi}{L^2}^2}{\nm{\phi}{L^2}^2}
\leq \EE(g_0, \phi_0) \leq 2 \EE(g_T, \phi_T) + C A^2 \ge^{\frac{1}{2}} = 2
\gl(g_T) + C A^2 \ge^{\frac{1}{2}}
\end{align*}
as required.
\end{proof}

\section{Low-Energy Convergence on Three-manifolds} \label{threelowsec}

In this section we prove Theorem \ref{threelow}.  First we recall some
comparison geometry results for manifolds with supercritical $L^p$ bounds on
curvature.
\begin{defn} Let $(M^n, g)$ be a Riemannian manifold, and let $\Rc_-$ denote the
lowest eigenvalue of the Ricci tensor.  Let
\begin{align*}
k(\gl, p) =&\ \int_M \left(\max \{0, (n-1) \gl - \Rc_- \} \right)^p dV.
\end{align*}
\end{defn}

\begin{thm} \label{weakbishgrom} (\cite{PW} Theorem 1.1) \label{petersenvolume}
Let $x \in M$, $\gl \leq 0$, and $p > \frac{n}{2}$ be given, then there is a
constant $C(n, p, \gl, R)$ which is nondecreasing in $R$ such that when $r < R$
we have
\begin{align*}
\left( \frac{\Vol B(x, R)}{v(n, \gl, R)} \right)^{\frac{1}{2p}} - \left(
\frac{\Vol B(x, r)}{v(n, \gl, r)} \right)^{\frac{1}{2p}} \leq&\ C(n, p, \gl, R)
k(\gl, p)^{\frac{1}{2p}},
\end{align*}
where $v(n,\gl,R)$ denotes the volume of a ball of radius $R$ in the simply
connected space form of dimension $n$ with constant sectional curvature
$(n-1)\gl$.
\end{thm}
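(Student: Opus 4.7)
The plan is to prove the statement by extending the classical Bishop--Gromov radial volume comparison, using a Gronwall-type integral estimate in place of the usual pointwise Ricci lower bound. Let $A(r,\theta)$ denote the area element of the geodesic sphere of radius $r$ pulled back to the unit tangent sphere at $x$ along the direction $\theta$, and let $A_\gl(r)$ be the corresponding quantity in the model space of constant sectional curvature $\gl$. Writing $f := A^{1/(n-1)}$ and $f_\gl := A_\gl^{1/(n-1)}$, the trace of the Riccati equation along the radial geodesic $\gg_\theta$ gives the Jacobi inequality $f'' + \frac{\operatorname{Ric}(\gg_\theta',\gg_\theta')}{n-1}\, f \leq 0$, holding in the barrier sense once one extends $f$ by zero past the first cut point. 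In the model $f_\gl'' + \gl\, f_\gl = 0$ with equality.

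First I would form the excess function $\phi(r,\theta) := \max\{0,\ f(r,\theta) - f_\gl(r)\}$, which captures the defect in Bishop--Gromov monotonicity. Subtracting the two Jacobi (in)equalities and using that $\phi$ is supported where $f \geq f_\gl$ leads to a linear second-order differential inequality
\begin{align*}
\phi'' + \gl\,\phi \;\leq\; \tfrac{1}{n-1}\bigl((n-1)\gl - \operatorname{Ric}(\gg_\theta',\gg_\theta')\bigr)_{+}\, f(r,\theta) \;=:\; \psi(r,\theta),
\end{align*}
with $\phi(0,\theta) = \phi'(0,\theta) = 0$. Inverting the model operator $\del_r^2 + \gl$ on $[0,R]$ yields a pointwise bound $\phi(r,\theta) \leq \int_0^R G_\gl(r,s)\,\psi(s,\theta)\,ds$ with a Green-type kernel $G_\gl$ depending only on $n,\gl,R$ and monotonically growing in $R$.

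Next I would convert this pointwise control of $\phi$ into an integral volume bound by integrating over $\theta \in S^{n-1}$ and using $\Vol(B(x,r)) = \int_{S^{n-1}}\!\int_0^r A(t,\theta)\,dt\,d\theta$. H\"older's inequality in $\theta$ with exponents $p$ and $p/(p-1)$ controls the $\theta$-integral of $\psi$ by $k(\gl,p)^{1/p}$ times a Jacobian factor, and the restriction $p > n/2$ is precisely what is needed to keep that companion factor integrable as $r \to 0$ (where $f$ vanishes linearly). A further Cauchy--Schwarz step, introduced to match the concave-in-$r$ quantity $\bigl(\Vol(B(x,r))/v(n,\gl,r)\bigr)^{1/(n-1)}$ with the half-power of the radial integral, produces the exponent $1/(2p)$ in the final bound. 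Rearranging yields the stated monotonicity-defect inequality, and the constant $C(n,p,\gl,R)$ is nondecreasing in $R$ by inspection of the Green's kernel together with the explicit formula for $v(n,\gl,R)$.

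The main obstacle is the correct handling of the cut locus and the associated loss of smoothness of $f$: one must work with barrier (upper support) solutions to the Jacobi inequality, extended by zero past the first cut point, and verify that the resulting distributional differential inequality is sufficient to drive the Green's function bound above. A secondary technical issue is tracking the sharp $R$-dependence of $C$; this requires using the explicit form $f_\gl(r) = \sinh(\sqrt{-\gl}\, r)/\sqrt{-\gl}$ for $\gl<0$ (or $f_\gl(r) = r$ for $\gl=0$) to confirm that the $R$-dependence enters only through monotonically growing norms of $G_\gl(\cdot,\cdot)$ on $[0,R]^2$ and of $1/v(n,\gl,R)$.
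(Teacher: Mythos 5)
This theorem is quoted verbatim from Petersen--Wei and the paper supplies no proof of its own, so your sketch must be measured against the argument of \cite{PW}. It has a genuine gap. The source term in your differential inequality, $\psi = \tfrac{1}{n-1}\bigl((n-1)\gl - \Rc(\gg_\theta',\gg_\theta')\bigr)_+ f$, contains the unknown volume density $f = \mathcal{A}^{1/(n-1)}$ itself, so the Green's-function step does not yield a closed estimate: it gives $\phi(r,\theta) \le \int_0^r G_\gl(r,s)\,\rho(s,\theta)\,f(s,\theta)\,ds$ with $\rho := ((n-1)\gl - \Rc)_+$, and to remove $f$ from the right-hand side you must either absorb it (which requires smallness of $k(\gl,p)$, not assumed in the theorem) or run Gronwall along each individual ray, which bounds $f$ by $f_\gl\exp\bigl(\int_0^r G_\gl\rho\bigr)$ --- an exponential of a line integral of $\rho$ along a single geodesic, which is not controlled by the global $L^p$ quantity $k(\gl,p)$. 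The proof in \cite{PW} avoids exactly this by working with the \emph{first-order} Riccati equation for the mean curvature excess $\psi := (m - m_\gl)_+$, $m = \del_t\log\mathcal{A}$: the inequality $\psi' + \tfrac{1}{n-1}\psi^2 + \tfrac{2m_\gl}{n-1}\psi \le \rho$ carries a quadratic damping term that performs the absorption for free, pointwise in $\theta$, before any integration; multiplying by $\psi^{2p-1}\mathcal{A}$ and integrating gives $\|\psi\|_{L^{2p}(B(x,r))} \le C(n,p)\,k(\gl,p)^{1/(2p)}$ precisely when $p > n/2$.

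Relatedly, your sketch never actually produces the exponent $\tfrac{1}{2p}$. In \cite{PW} it has two identifiable sources: the quadratic Riccati term converts the $L^p$ norm of $\rho$ into the \emph{square} of an $L^{2p}$ norm of $\psi$ (whence $k^{1/(2p)}$ rather than $k^{1/p}$), and the substitution $y(r) = \bigl(\Vol B(x,r)/v(n,\gl,r)\bigr)^{1/(2p)}$ linearizes the resulting inequality $\tfrac{d}{dr}\bigl(V/V_\gl\bigr) \lesssim \|\psi\|_{L^{2p}}\,V^{1-\frac{1}{2p}}/V_\gl$ so that it integrates from $r$ to $R$, with integrability at $r = 0$ again using $p > n/2$. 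A linear Green's-function estimate for the second-order Jacobi comparison cannot generate this gain of integrability, and your ``further Cauchy--Schwarz step'' conflates the exponent $\tfrac{1}{n-1}$ coming from $f = \mathcal{A}^{1/(n-1)}$ with the target exponent $\tfrac{1}{2p}$; these are unrelated. (A smaller, repairable point: $\phi = \max\{0, f - f_\gl\}$ does not satisfy your differential inequality distributionally, since the kink where $f - f_\gl$ crosses zero contributes a positive measure to $\phi''$; one should bound $f - f_\gl$ first and take positive parts of both sides only afterward.)
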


\begin{thm} (\cite{Gallot} Theorem 3) \label{gallotiso} Let $\ga$ and $D$ be any
positive constants and $p > \frac{n}{2}$.  In any Riemannian manifold $(M^n, g)$
with $\diam(g) \leq D$ satisfying
\begin{align*}
\frac{1}{\Vol(M)} \int_M \left( \max \left\{0, \frac{\Rc_-}{\ga^2(n-1)} - 1
\right\} \right)^p dV \leq&\ \frac{1}{2 \left( e^{B(p) \ga D}  - 1 \right)},
\end{align*}
every domain $\Omega$ satisfies
\begin{align*}
\frac{\Area(\del \Omega)}{\Vol(M)} \geq \gg(\ga, D) \min \left\{
\frac{\Vol(\Omega)}{\Vol(M)}, \frac{\Vol(M \setminus \Omega)}{\Vol(M)}
\right\}^{1 - \frac{1}{p}}.
\end{align*}
\end{thm}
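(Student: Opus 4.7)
The plan is to follow Gallot's original approach, which extends classical Heintze--Karcher and Bishop--Gromov technology to the setting of integral Ricci bounds, reducing the isoperimetric comparison on $(M,g)$ to that of a constant curvature model space via a controlled perturbation argument driven by the integral curvature quantity $k(\alpha, p)$. First I would work in geodesic polar coordinates around a point $x \in M$, writing the volume element as $dV = J(t,\theta)\,dt\,d\theta$. Setting $f(t,\theta) = J(t,\theta)^{1/(n-1)}$, the radial function $f$ satisfies the Riccati-type inequality $f''(t) + \frac{1}{n-1}\Rc(\dot\gamma,\dot\gamma)\,f(t) \leq 0$ along each radial geodesic $\gamma$, with equality achieved on the space form of curvature $-\alpha^2$.

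Next, comparing $f$ to its model counterpart $f_\alpha$ and computing the derivative of the ratio $f/f_\alpha$ yields a differential inequality whose nonnegative forcing term is precisely (a constant multiple of) the integrand $\max\{0,\Rc_- - (n-1)(-\alpha^2)\}$ appearing in $k(\alpha, p)$. Integrating this inequality over radial intervals of length at most $D$, applying H\"older's inequality in the radial variable to convert pointwise Ricci control into an $L^p$ estimate, and invoking a Gronwall argument (whose exponent reproduces precisely the $e^{B(p)\alpha D}$ growth of the model Jacobi equation), I would obtain a refined Bishop--Gromov-type comparison in the spirit of Theorem \ref{weakbishgrom}, but tailored to the $-\alpha^2$ model and effective on the scale of $D$.

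With this volume comparison in hand, the isoperimetric inequality follows by a Cheeger-type argument. Given a domain $\Omega \subset M$, I would foliate $M$ by the equidistant sets to $\partial\Omega$; the co-area formula then lets $\Area(\partial\Omega)$ control the volumes of small tubular thickenings of $\partial\Omega$. Transferring these tube volumes to the space form via the refined volume comparison reduces matters to the known isoperimetric profile $\sim \min\{\Vol(\Omega),\Vol(\Omega^c)\}^{(n-1)/n}$ on the $-\alpha^2$ model. The precise smallness hypothesis $k(\alpha,p) \leq [2(e^{B(p)\alpha D}-1)]^{-1}$ is the exact amount of slack needed for the perturbative loss from volume comparison to be dominated by the model profile, and the exponent degrades from $(n-1)/n$ to $1 - 1/p$ precisely because H\"older's inequality in the radial integration contributes a $p$-th root factor.

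The main obstacle is analytic: $f$ is only continuous (not $C^2$) at conjugate points, so one must either interpret the Riccati inequality in the distributional sense, work with the barrier or viscosity Laplacian of the distance function, or excise the cut locus following Calabi to justify the integration by parts. One must also carefully track constants through the H\"older--Gronwall chain so that the resulting $\gamma(\alpha,D)$ depends only on $\alpha$ and $D$ as claimed, with no residual dependence on injectivity radius or further geometric data of $(M,g)$.
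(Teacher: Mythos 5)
This statement is not proved in the paper at all: it is quoted verbatim as Theorem 3 of Gallot's Ast\'erisque article and used as a black box (via Corollary \ref{3dimsob}) to bound the isoperimetric, hence Sobolev, constant. So there is no internal proof to compare against, and the only question is whether your sketch would stand on its own as a proof of Gallot's theorem. As an outline of the general strategy it is pointed in the right direction -- Riccati comparison for the Jacobian of an exponential map, H\"older to pass from pointwise to $L^p$ Ricci control, and a tube/co-area argument for the isoperimetric profile -- but it is not yet a proof, and two of its steps conceal the real content.

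First, the passage from a pointed Bishop--Gromov comparison to the isoperimetric inequality is not a routine ``Cheeger-type argument.'' The standard route (Gromov--L\'evy, B\'erard--Besson--Gallot, and Gallot's integral-curvature version) applies the Jacobian comparison not in polar coordinates about a point but to the normal exponential map from the boundary of an isoperimetric minimizer; this requires the geometric-measure-theoretic existence and regularity of that minimizer and a bound on its (weak) mean curvature from the first variation, none of which appears in your sketch. Moreover, the mechanism that turns the curvature excess $\max\{0,\Rc_-/(\ga^2(n-1))-1\}$ integrated \emph{along individual geodesics} into the \emph{global} quantity $\tfrac{1}{\Vol(M)}\int_M(\cdot)^p\,dV$ is precisely the Heintze--Karcher change of variables, i.e.\ integrating the geodesic errors against the Jacobian weight so that the double integral over (footpoint, radius) becomes an integral over $M$. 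Without identifying that step, the hypothesis of the theorem (an $L^p$ bound over $M$) cannot be brought to bear on the radial differential inequality. Second, your explanation of the exponent $1-\tfrac{1}{p}$ as ``a $p$-th root factor from H\"older in the radial variable'' is too quick: the degradation from $(n-1)/n$ to $1-1/p$ comes from how the $L^p$ norm of the excess interacts with the volumes of the sublevel sets in the Heintze--Karcher integration, and the threshold $\tfrac{1}{2}(e^{B(p)\ga D}-1)^{-1}$ is exactly calibrated so that this error term is absorbed into half of the model profile. Your remarks about the cut locus and distributional Riccati inequalities are apt but secondary; the substantive gaps are the ones above.
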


\begin{rmk} \label{volbndrmk} Observe that an $L^p$ energy bound, $p >
\frac{n}{2}$, implies an upper bound on the volume growth of balls using Theorem
\ref{weakbishgrom}.  Therefore in the presence of such a bound the volume is
bounded above in terms of the diameter.
\end{rmk}

\begin{cor} \label{3dimsob} Given $V > 0, D > 0$ there exists $\ge > 0$ so that
if
$(M^3, g)$ is a compact Riemannian manifold with $\Vol(g) \geq V$, $\diam(g)
\leq D, \mathcal F(g) \leq \ge$ then there is a constant $C = C(V,D)$
such that $C_S(g) \leq C$.
\begin{proof} Choose $\ge$ so that
\begin{align*}
\ge \leq \frac{1}{2(e^{B(2) D} - 1)},
\end{align*}
where $B(2)$ is the constant from Theorem \ref{gallotiso}.  Since $\max
\left\{0, \frac{\Rc_-}{n-1} - 1 \right\} \leq \frac{\brs{\Rc}}{n-1}$ and
$\Vol(M) \geq V$, Theorem \ref{gallotiso} applies with $\ga = 1, p = 2$ to
conclude that there is a
constant $\gg$ depending on $V$ and $D$, so that for any subdomain $\Omega$,
\begin{align*}
\frac{\Area(\del \Omega)}{\min \{\Vol(\Omega), \Vol(M \backslash \Omega)
\}^{\frac{2}{3}}} \geq \gg \frac{1}{\min \{\Vol(\Omega), \Vol(M \backslash
\Omega) \}^{\frac{1}{6}}} \geq \gg
\end{align*}
The last line follows since there is a uniform upper bound on the volume of $M$
as observed in Remark \ref{volbndrmk}.  Thus the isoperimetric constant is
bounded, and the result now follows from the discussion in Remark
\ref{isoequiv}.
\end{proof}
\end{cor}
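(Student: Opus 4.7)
The plan is to apply Gallot's theorem (Theorem \ref{gallotiso}) with parameters $p = 2$, $\alpha = 1$, and $n = 3$. The hypothesis of that theorem asks for control on the averaged $L^p$ norm of the negative part of Ricci normalized by $(n-1)\alpha^2$; in our setting this quantity is pointwise dominated by $\brs{\Rc}/(n-1)$, hence by a universal multiple of $\brs{\Rm}$. Therefore
\begin{align*}
\frac{1}{\Vol(M)} \int_M \left( \max\left\{0, \frac{\Rc_-}{n-1} - 1\right\} \right)^2 dV \leq \frac{C}{\Vol(M)} \int_M \brs{\Rm}^2 dV \leq \frac{C \ge}{V},
\end{align*}
using the hypotheses $\FF(g) \leq \ge$ and $\Vol(g) \geq V$. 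Choosing $\ge$ small enough (depending on $V$ and $D$) makes this smaller than the threshold $1/(2(e^{B(2)D}-1))$, so Gallot's conclusion provides a uniform $L^1$-type isoperimetric inequality with constant $\gg(1, D)$ depending only on $V$ and $D$.

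Next I would convert this into the standard isoperimetric constant $C_I(M,g)$. The Gallot inequality reads
\begin{align*}
\frac{\Area(\del \Omega)}{\Vol(M)} \geq \gg \min\left\{ \frac{\Vol(\Omega)}{\Vol(M)}, \frac{\Vol(M\setminus\Omega)}{\Vol(M)} \right\}^{1/2},
\end{align*}
so after rearranging,
\begin{align*}
\frac{\Area(\del \Omega)}{\min\{\Vol(\Omega), \Vol(M\setminus\Omega)\}^{2/3}} \geq \frac{\gg}{\min\{\Vol(\Omega), \Vol(M\setminus\Omega)\}^{1/6}} \geq \frac{\gg}{\Vol(M)^{1/6}}.
\end{align*}
Here I use the easy observation that $L^2$ curvature control (supercritical in dimension $3$) together with the diameter bound gives an upper bound on $\Vol(M)$ via Theorem \ref{weakbishgrom} (see Remark \ref{volbndrmk}), so the right-hand side is bounded below by a constant depending only on $V$ and $D$.

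The remainder is bookkeeping. By Remark \ref{isoequiv}, the bound on the isoperimetric constant gives a bound on the $L^1$ Sobolev constant, which in turn yields the desired bound on $C_S(g)$. The only real point to be careful about is ensuring every ingredient in the chain (the pointwise Ricci bound, the Gallot hypothesis, the volume upper bound from $L^2$ curvature + diameter, and the passage from $L^1$ to $L^2$ Sobolev) depends only on $V$ and $D$; nothing in the argument appears to require additional assumptions, so the main task is simply to track these dependencies cleanly.
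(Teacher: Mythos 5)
Your proposal is correct and follows essentially the same route as the paper: apply Gallot's theorem with $p=2$, $\alpha=1$ (verifying the hypothesis via the pointwise Ricci bound and $\Vol \geq V$), use the volume upper bound from Remark \ref{volbndrmk} to control the isoperimetric constant, then pass to the $L^2$ Sobolev constant via Remark \ref{isoequiv}. If anything you are slightly more careful than the paper in tracking the dependence of $\ge$ on $V$ and in writing the final isoperimetric lower bound as $\gg/\Vol(M)^{1/6}$ rather than the paper's abbreviated $\gg$.
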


\begin{proof}[Proof of Theorem \ref{threelow}] Note that since we have assumed a
lower bound for the volume, and by Remark \ref{volbndrmk} we have an upper bound
for the volume as long as $\ge \leq 1$, we can rescale to unit volume and it
suffices to show the theorem for such metrics.  First we aim to show a certain
short-time existence statement for solutions to the $L^2$ flow.  We claim that
given $D$ there exists a large constant $K$, and small constants $\ge >
0$ and $T > 0$ such that if $(M^3, g)$ is a compact Riemannian manifold
satisfying
\begin{align} \label{threelow15}
\Vol(g) = 1, \quad \diam(g) \leq D, \quad \mathcal F(g) \leq \ge
\end{align}
 then the solution to the $L^2$ flow exists on $[0, T]$ and
satisfies the estimates
\begin{gather} \label{threelow20}
\begin{split}
\sup_{[0, T]} \diam(g_t) <&\ KD,\\
\sup_{[0, T]} t^{\frac{1}{2}} \brs{\Rm}_{C^0(g_t)}
<&\ 1.
\end{split}
\end{gather}
If the claim were false, then for any choice of $K$, we obtain sequences $\ge_i
\to 0$, $t_i \to 0$, and compact Riemannian manifolds $(M_i^3, g^i)$ such that
$g_i$ satisfies (\ref{threelow15}) with $\mathcal F(g) \leq \ge_i$ and the
solution to the $L^2$ flow with initial condition $g^i$ satisfies the estimates
(\ref{threelow20}) on a maximal time interval $[0, t_i]$.

We aim to derive a contradiction from this statement for sufficiently large $K$.
 First we claim that as long as (\ref{threelow20}) holds there is a uniform
constant $A$ depending on $D$ and $K$ such that
\begin{align*}
\sup_{[0, t_i]} C_S(g^i_t) \leq A
\end{align*}
As long as $\ge_i$ is sufficiently small with respect to $K$ and $D$, this
follows directly from Corollary \ref{3dimsob}.  Suppose now that the second
condition of (\ref{threelow20}) failed at time $t_i$, i.e.
\begin{align*}
\sup_M \brs{\Rm}_{C^0(g^i_{t_i})} = t_i^{-\frac{1}{2}}.
\end{align*}
Define the sequence of time dependent metrics
\begin{align} \label{3flds10}
\til{g}^i(t) = t_i^{-\frac{1}{2}} g^i \left(
t_i \cdot t \right).
\end{align}
The family of metrics $\til{g}^i(t)$ exists on $[0, 1]$ and
\begin{align*}
\sup_{\left[\frac{1}{2}, 1 \right]} C_S(\til{g}^i) \leq&\ A,\\
\sup_{\left[\frac{1}{2}, 1 \right]}
\brs{\til{\Rm}^i}_{C^0(\til{g}^i)} <&\ 2.
\end{align*}
Moreover, by construction $\brs{\til{\Rm}^i}_{C^0(\til{g}^i_{1})} = 1$, and
we let $x^i \in M^i$ be a point realizing this supremum. Using the bound on the
Sobolev constant, one has a lower bound for the volume growth of small balls
(see Lemma \ref{diambnd}), so Cheeger's lemma implies that $\inj_{\til{g}^i}
\geq \nu
> 0$ for some small constant $\nu$. By (\cite{SL21} Theorem 7.1, see also
\cite{SL2LTB} Corollary 1.6) the sequence $\{(M^i, \til{g}^i_t, x^i)\}$ contains
a
subsequence converging to $(M^{\infty}, g^{\infty}_t, x^{\infty})$. Moreover,
one has $\brs{\Rm^{\infty}}_{g^{\infty}_{1}}(x^{\infty}) = 1$. However, since
$\ge_i \to 0$, one
has $\mathcal F(\til{g}^i_{1}) \to 0$. By Fatou's Lemma we can conclude
$\mathcal
F(g^{\infty}_{1}) = 0$, contradicting nonflatness of $(M^{\infty},
g^{\infty},
x^{\infty})$. Thus this possibility is ruled out.

Therefore it must be the case that the first condition of (\ref{threelow20})
fails.  We will work with one element of the sequence and drop the index $i$
from the notation.  We want to derive a contradiction by showing that the first
Laplace eigenvalue of $(M, g(t_i))$ is quite small, then using Theorem
\ref{glestimate} to show that the initial Laplace eigenvalue had to be quite
small, a contradiction.  We will estimate $\gl(g_{t_i})$ using the trick that
\begin{align*}
\gl(M) \leq \max \{ \mu(M_1), \mu(M_2) \}
\end{align*}
where $M_i$ are disjoint open subsets of $M$ and $\mu(M_i)$ denotes the first
Dirichlet eigenvalue of the manifold with boundary.  To that effect, since
$\diam(g_{t_i}) = KD$ we choose
points $x, y$ such that $d_{g_{t_i}}(x, y) = KD$ and estimate
$\mu\left(B_{\frac{KD}{2}}(x) \right)$ above.  To simplify notation let $R =
\frac{KD}{2}$, and let
$\phi \in C^1(M)$ satisfy
\begin{align*}
\phi_{| B_{\frac{R}{2}}(x)} \equiv&\ 1, \qquad \supp \phi \subset B_R(x), \qquad
\brs{\N \phi} \leq \frac{C}{R}.
\end{align*}
Observe that $\phi$ is nonconstant and moreover
\begin{align*}
\int_M \brs{\N \phi}^2 \leq&\ \frac{C}{R^2} \Vol(B_R(x_i)), \qquad \int_M
\phi^2 \geq \Vol(B_{\frac{R}{2}}(x_i)).
\end{align*}
However, since the Sobolev constant is bounded, there is a certain constant
$\eta
= \eta(K, D)$ such that
\begin{align*}
\Vol(B_R(x)) \geq \eta(K, D).
\end{align*}
Applying Theorem \ref{weakbishgrom} with $r = \frac{R}{2}$ and $\gl = 0$, we
observe that
\begin{align*}
\Vol(B_{\frac{R}{2}}(x)) \geq C \left( \Vol(B_R(x))^{\frac{1}{4}} - C(n, R)
k(\gl,
p)^{\frac{1}{4}} \right)^4.
\end{align*}
If we choose $\ge$ small enough so that
\begin{align*}
C(n, R) k(\gl, p) \leq \frac{1}{2} \eta(K,D)
\end{align*}
we may conclude that
\begin{align*}
\Vol(B_{\frac{R}{2}}(x)) \geq C \Vol(B_{R}(x))
\end{align*}
for a universal constant $C$.  We conclude that for a new constant $C$ one
has
\begin{align*}
\mu \left( B_{R}(x) \right) \leq \frac{\nm{\N
\phi}{L^2}^2}{\nm{\phi}{L^2}^2} \leq \frac{C}{2 K^2 D^2}.
\end{align*}
One estimates the first Dirichlet eigenvalue of the ball of radius $R$
around $y$ identically, and thus we yield
\begin{align*}
\gl(g_{t_i}) \leq \frac{C}{2 K^2D^2}.
\end{align*}
Now suppose that $K$ is sufficiently large that $\frac{C}{K^2 D^2} \leq
\bar{\gl}$, where $\bar{\gl}$ is the constant from Theorem \ref{glestimate}. 
Also note that since at each time the metric has bounded volume and bounded
Sobolev
constant $H_1^2 \to L^6$, a simple application of H\"older's inequality shows
that the constant $A$ of Theorem \ref{glestimate} is bounded in terms of the
given Sobolev constant.  We now choose $\ge$ sufficiently small with respect to
the this bound (which depends on $K$ and $D$), so that Theorem \ref{glestimate}
applies to conclude
\begin{align*}
 \gl(g_0) \leq 2 \gl(g_{t_i}) + C A ^2 \ge^{\frac{1}{2}} \leq \frac{C}{K^2D^2}.
\end{align*}
However, from Theorem \ref{gallotiso} we know that if $\ge$ is chosen
sufficiently small there is a lower
bound on the isoperimetric ratio
\begin{align*}
 \frac{\Area(\del \Omega)}{\min \{\Vol(\Omega), \Vol(M \backslash \Omega) \}}
\geq \gg(D) \Vol(M)^{\frac{1}{2}} \min \{\Vol(\Omega), \Vol(M \backslash \Omega)
\}^{-\frac{1}{2}}.
\end{align*}
Since $\Vol(g_0) = 1$, we conclude
\begin{align*}
 h(M, g_0) := \inf_{\Omega \subset M} \frac{\Area(\del \Omega)}{\min
\{\Vol(\Omega), \Vol(M \backslash \Omega)\}} \geq&\ \gg(D).
\end{align*}
By Cheeger's inequality \cite{Cheeger} we conclude
\begin{align*}
 \gl(g_0) \geq \frac{h(M, g_0)^2}{4} \geq \frac{\gg(D)^2}{4}.
\end{align*}
Choosing $K$ sufficiently large with respect to $\gg(D)$, we may conclude
\begin{align*}
 \frac{\gg(D)^2}{4} \leq \gl(g_0) \leq \frac{C}{K^2 D^2} < \frac{\gg(D)^2}{4},
\end{align*}
a contradiction.  Thus the claim of uniform short time existence follows.

To finish the proof we use a version of the implicit function theorem for
solutions to the $L^2$ flow near flat manifolds.  In particular, we continue
arguing by contradiction, and given $D > 0$ we assume that for every $\ge > 0$
there is a three-manifold satisfying the hypotheses of the theorem but for which
the flow does not exist for all time and converge to a flat metric.  Choose a
sequence $\ge_i \to 0$ and $(M_i^3, g^i)$ realizing this possibility.  By the
discussion of the diameter bound above and Theorem \ref{globalsmoothing}, for
sufficiently small $\ge_i$ we have that the solution to
the $L^2$ flow exists on $[0, T]$, and moreover
\begin{align*}
\diam(g^i_{T}) <&\ KD,\\
\brs{\N^k \Rm}_{C^0(g^i_{T})} \leq&\ C_k.
\end{align*}
By the discussion above, we also conclude a uniform lower bound on the
injectivity radius of $g^i_{T}$.  It follows from \cite{Hamilton} Theorem 2.3
that we may take a subsequence of $\{(M^3_i, g^i_{T})\}$ which converges in
the $C^k$ topology for any $k$, necessarily to a flat metric.  At this point one
can repeat the argument of (\cite{SL22} Theorem 1.6) to conclude that for
$g^i_{T}$ sufficiently close to a flat metric in $C^k$, the $L^2$ flow exists
for all time and converges exponentially to a flat metric.  Given this
exponential convergence, it is a straightforward matter to show that the volume
normalized $L^2$ flow also exists for all time and converges to a flat metric.
\end{proof}

\begin{cor} Given $V > 0, D > 0$ there exists $\ge > 0$ sufficiently small so
that the
space of metrics on $T^3$ satisfying $\Vol(g) \geq V$, $\diam(g) \leq D$,
$\mathcal
F(g) \leq \ge$ is connected in the $C^{\infty}$ topology.
\begin{proof} Theorem \ref{threelow} guarantees that for $\ge > 0$ sufficiently
small, metrics on $T^3$ satisfying $\Vol(g) \geq V$, $\diam(g) \leq D$ and
$\mathcal F(g) \leq \ge$ are smoothly deformable to flat metrics.  Since the
space of flat metrics on $T^3$ is path-connected, the corollary follows.
\end{proof}
\end{cor}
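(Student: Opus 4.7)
The plan is to use Theorem \ref{threelow} to deform every metric in the given space to a flat metric via the volume-normalized $L^2$ flow, and then exploit the known connectedness of the moduli space of flat metrics on $T^3$ to concatenate any two such deformations.

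First I would fix $V, D > 0$ and choose $\ge > 0$ so small that Theorem \ref{threelow} applies to every metric $g$ on $T^3$ with $\Vol(g) \geq V$, $\diam(g) \leq D$, $\FF(g) \leq \ge$. For each such $g$, the solution $g(t)$ to the volume-normalized $L^2$ flow with initial condition $g(0) = g$ exists on $[0, \infty)$ and converges in $C^\infty$ to a flat metric $g_\infty$ (the proof of Theorem \ref{threelow} gives smooth convergence via the Hamilton-type compactness together with the exponential stability argument imported from \cite{SL22}). Declaring $g(\infty) := g_\infty$, the map $t \mapsto g(t)$ provides a continuous path in the $C^\infty$ topology from $g$ to $g_\infty$ that lies entirely inside the given space of metrics (since the volume is preserved, the diameter stays bounded by the short-time estimate \eqref{threelow20} followed by the uniform curvature decay, and $\FF$ decreases along the flow).

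Next I would invoke the path-connectedness of the space of flat metrics on $T^3$ in $C^\infty$. Flat metrics on $T^3$ are precisely the quotients of $(\mathbb R^3, g_{\mathrm{eucl}})$ by lattices, so the moduli (and indeed the full space, as $T^3$ has a fixed underlying smooth structure) is parameterized by the contractible space of inner products on $\mathbb{R}^3$ modulo the lattice action; in any case it is path-connected in $C^\infty$. Thus given any two flat metrics $h_0, h_1$ satisfying the volume and diameter bounds, one can produce a smooth path $\gamma$ joining them within flat metrics of volume $\geq V$. A small subtlety is that $\gamma$ must remain inside the region $\{\Vol \geq V, \diam \leq D, \FF \leq \ge\}$; this is automatic for the $\FF$ constraint (flat metrics satisfy $\FF = 0$), and it holds for the $\Vol$ and $\diam$ constraints after a trivial rescaling of the connecting path, since flat tori of bounded volume and diameter form a connected subset of the moduli space.

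Finally, to connect two arbitrary metrics $g_0, g_1$ in the specified space, I would concatenate the three paths: $g_0 \rightsquigarrow g_0^\infty$ via the flow, $g_0^\infty \rightsquigarrow g_1^\infty$ via the path of flat metrics, and $g_1^\infty \rightsquigarrow g_1$ via the time-reversed flow of $g_1$. The main (mild) obstacle is the bookkeeping to verify that all three paths actually remain inside the region defined by the three inequalities; this reduces to the $C^\infty$ convergence of the flow (which keeps $\diam$ bounded and $\FF$ small for large $t$) and to choosing the interpolating path of flat metrics within the prescribed volume-diameter window, both of which are straightforward.
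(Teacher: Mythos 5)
Your proposal takes essentially the same approach as the paper's very short proof: flow each metric to a flat metric using Theorem \ref{threelow}, then invoke the path-connectedness of flat metrics on $T^3$. The extra bookkeeping you supply goes beyond what the paper writes and is not quite right at one point: you assert that the flow trajectory ``lies entirely inside the given space of metrics,'' justifying the diameter constraint by (\ref{threelow20}), but that estimate only gives $\diam(g_t) < KD$ for a possibly large constant $K$, not $\diam(g_t) \leq D$. The paper's own proof does not address this and argues heuristically (as you do) via deformability to the connected family of flat metrics, so your reconstruction matches the paper's level of rigor and its strategy.
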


\section{Low Energy Convergence on Four-manifolds} \label{fourmnflds}

In this section we investigate the $L^2$ flow with low energy on four-manifolds.
 The optimal convergence result in this direction would be an analogue of
Theorem \ref{threelow}, i.e. given energy sufficiently small with respect to the
Sobolev constant, the solution to the $L^2$ flow exists for all time and
converges to a flat metric.  The first test of this claim is to determine if
there are any other critical points of $\mathcal F$ in this regime.

\begin{prop} Given $A > 0$, there exists $\ge > 0$ so that if $(M^4, g)$ is a
compact Riemannian manifold satisfying
\begin{align*}
 \grad \FF \equiv&\ 0\\
 C_S(g) \leq&\ A\\
 \mathcal F(g) \leq&\ \ge,
\end{align*}
 then $g$ is flat.
\begin{proof} If the statement was false, then given $A > 0$, there exists a
sequence $\ge_i \to 0$ and a sequence of compact Riemannian manifolds $\{(M^4_i,
g^i) \}$ of compact critical, nonflat four-manifolds satisfying the hypotheses
of the theorem.  By rescaling, we may assume without loss of generality that the
metrics satisfy $\Vol(g^i) = 1$.  We first claim is that there is a uniform
curvature bound along the sequence.  If not, there is some subsequence such that
\begin{align*}
K_i := \brs{\Rm}_{g^i}(x_i) = \brs{\Rm}_{C^0(g^i)} \to \infty
\end{align*}
Observe that since the metrics are fixed points of the $L^2$ flow, the sequence
of manifolds $\{(M^4_i, K_i g^i, x_i) \}$ has uniform bounds on all
covariant derivatives of curvature by Theorem \ref{globalsmoothing}, and so the
sequence converges to a noncompact, nonflat
critical four-manifold.  But since $\ge_i \to 0$ it follows that this limiting
manifold must be flat, a contradiction.

Since there is a uniform bound on the curvature along the sequence, there are
also uniform bounds on all higher derivatives of curvature.  Also, since the
Sobolev constants are bounded, we obtain a uniform lower bound on the
injectivity radius of $g^i$, and a uniform upper bound on the diameter.  It
follows that we may take a limit of $\{(M^4_i, g^i) \}$, which is necessarily
flat.  In particular, for large enough $i$ $g^i$ is $C^k$-close to a flat metric
for arbitrary $k$.  It follows from (\cite{SL22} Theorem 1.6) that the solution
to the $L^2$ flow with initial condition $g^i$ exists for all time and converges
to a flat metric.  But since $g^i$ is critical, the flow is stationary,
therefore $g^i$ is already flat, a contradiction.  The proposition follows.
\end{proof}
\end{prop}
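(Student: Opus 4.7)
The strategy is a two-step contradiction argument: first establish a uniform $C^0$ curvature bound on any would-be counterexample sequence via a blowup analysis, then use compactness plus the stability theorem from \cite{SL22} to conclude.

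Suppose the statement fails. Then there is a sequence $\epsilon_i \to 0$ and compact nonflat Riemannian four-manifolds $(M_i^4, g^i)$ with $\grad \FF(g^i) \equiv 0$, $C_S(g^i) \leq A$ and $\FF(g^i) \leq \epsilon_i$. Because $\FF$ is scale-invariant in dimension four, and because the $L^2$ Sobolev inequality (Definition \ref{Sobconstdef}) is also scale-invariant in dimension four, I may rescale so that $\Vol(g^i) = 1$ for all $i$ without altering the hypotheses.

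\textbf{Uniform curvature bound.} The first step is to show $\sup_i \brs{\Rm}_{C^0(g^i)} < \infty$. If not, pass to a subsequence with $K_i := \brs{\Rm}_{g^i}(x_i) = \brs{\Rm}_{C^0(g^i)} \to \infty$ and consider the rescaled metrics $\til{g}^i := K_i g^i$. These remain critical for $\FF$, satisfy $\brs{\Rm}_{\til{g}^i} \leq 1$ with equality at $x_i$, have Sobolev constant bounded by $A$, and satisfy $\FF(\til{g}^i) = \FF(g^i) \to 0$ by scale invariance. Viewing each $\til{g}^i$ as a static solution of the $L^2$ flow, Theorem \ref{globalsmoothing} furnishes uniform bounds on $\brs{\N^m \Rm}$ for all $m$. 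The Sobolev constant bound, via the volume estimate (\ref{volbnd}) of Lemma \ref{diambnd} and Cheeger's lemma, yields $\inj_{\til{g}^i}(x_i) \geq \nu > 0$. Corollary \ref{wkcompactness} then extracts a subsequential Cheeger-Gromov limit $(M_\infty, g_\infty, x_\infty)$ which is a complete critical metric for $\FF$ with $\brs{\Rm_{g_\infty}}(x_\infty) = 1$. On the other hand Fatou's lemma gives $\int_{M_\infty} \brs{\Rm_{g_\infty}}^2 \, dV_{g_\infty} = 0$, so $g_\infty$ is flat, contradicting $\brs{\Rm_{g_\infty}}(x_\infty)=1$.

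\textbf{Closing the argument.} Now with $\brs{\Rm}_{C^0(g^i)} \leq \Lambda$ uniform in $i$, Theorem \ref{globalsmoothing} applied to the static solutions gives uniform bounds on $\brs{\N^m \Rm}_{C^0(g^i)}$ for all $m$. The Sobolev bound plus curvature bound give a uniform injectivity radius lower bound (Cheeger's lemma) and, through Lemma \ref{diambnd}, a uniform diameter upper bound. Standard Cheeger-Gromov compactness produces a $C^\infty$ subsequential limit $(M_\infty, g_\infty)$, which by Fatou must satisfy $\FF(g_\infty) = 0$ and hence is flat. Thus for $i$ sufficiently large, $g^i$ is $C^k$-close to a flat metric for any prescribed $k$. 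Invoking the stability theorem (\cite{SL22} Theorem 1.6), the solution to the $L^2$ flow starting at $g^i$ exists for all time and converges exponentially to a flat metric. But $g^i$ is a critical point of $\FF$, so the flow with this initial condition is stationary; the only way it can converge to a flat metric is if $g^i$ itself is already flat, contradicting our choice of a nonflat sequence.

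\textbf{Main obstacle.} The delicate point is the blowup step: one must verify that rescaling preserves both the criticality equation and the Sobolev constant hypothesis so that Corollary \ref{wkcompactness} can be applied at the blown-up scale, and one must handle the fact that the limit is a priori only complete (possibly noncompact) rather than a closed manifold. The fact that $\FF$ and $C_S$ are both scale-invariant precisely in dimension four is what makes the argument work, and the application of \cite{SL22} Theorem 1.6 to convert $C^k$-proximity to flatness into actual flatness for critical metrics is the clean mechanism for finishing the argument without a separate rigidity analysis.
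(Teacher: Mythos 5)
Your proposal is correct and follows the paper's argument essentially step for step: the blowup contradiction to get a uniform curvature bound, then Cheeger-Gromov compactness to a flat limit, then the stability theorem from \cite{SL22} to conclude. The only difference is that you spell out a few steps (scale invariance of $\FF$ and $C_S$ in dimension four, the explicit appeal to Corollary \ref{wkcompactness} and Lemma \ref{diambnd}) that the paper leaves implicit.
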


Next we give the proof of Theorem \ref{fourlow1}, which says that, in
determining the behavior of the $L^2$ flow on four-manifolds with energy small
with respect to the Sobolev constant, the problems lie in understanding the
short-time behavior of the Sobolev constant.

\begin{proof}[Proof of Theorem \ref{fourlow1}] We begin by showing a certain
uniform short time existence
statement. In particular, we claim that we may choose $\ge(A, B)$ so that if
$(M^4, g_0)$ is as in the statement of the theorem, then there exists a uniform
$T(A,B) > 0$ so that the
solution to the $L^2$ flow exists on $[0, T]$ and moreover satisfies the
estimates
\begin{gather} \label{continuityestimates}
\begin{split}
\sup_{t \in [0, T]} C_S(g_t) <&\ 2 B,\\
t^{\frac{1}{2}} \brs{\Rm}_{C^0(g_t)} <&\ 1.
\end{split}
\end{gather}

If this is false, then we have a sequence $\ge_i \to 0$ and a
sequence of compact Riemannian four-manifolds $\{(M^i, g^i)\}$ satisfying
$C_S(g^i) \leq
B$ and $\FF(g^i) \leq \ge_i$, such that if $g^i_t$ denotes the solution to
the $L^2$ flow with initial condition $g^i$, one the estimates of
(\ref{continuityestimates}) fails at some time $t_i < 1$. Suppose there existed
a subsequence where the second
condition of (\ref{continuityestimates}) failed at $t_i$, i.e.
$t_i^{\frac{1}{2}} \brs{\Rm}_{C^0(g^i_{t_i})} = 1$. Define
\begin{align*}
\til{g}^i(t) = t_i^{-\frac{1}{2}} g^i \left( t_i \cdot t \right).
\end{align*}
Each one-parameter family $\til{g}^i(t)$ exists on $[0, 1]$ and moreover
\begin{align*}
\sup_{[0, 1]} C_S(\til{g}^i) < 2 B, \quad \sup_{\left[\frac{1}{2}, 1 \right]}
\brs{\til{\Rm}^i}_{C^0(\til{g}^i)} < 2.
\end{align*}
Moreover, by construction $\brs{\til{\Rm}^i}_{C^0(\til{g}^i_1)} = 1$, and we let
$x^i$ be a point realizing this supremum. Using the bound on $C_S(\til{g}^i)$,
one automatically obtains a scale-invariant lower bound on the volume growth of
balls, and then it follows from Cheeger's lemma that $\inj_{\til{g}^i} \geq
\nu > 0$ for some small constant $\nu$. By (\cite{SL21} Theorem 7.1, see also
\cite{SL2LTB} Corollary 1.6) the sequence $(M^i, \til{g}^i_t, x^i)$ contains a
subsequence converging to $(M^{\infty}, g^{\infty}_t, x^{\infty})$. Moreover,
one has $\brs{\Rm^{\infty}}(x^{\infty}) = 1$. However, since $\ge_i \to 0$, one
has $\mathcal F(\til{g}^i_1) \to 0$. By Fatou's Lemma we can conclude $\mathcal
F(g^{\infty}_1) = 0$, contradicting nonflatness of $(M^{\infty}, g^{\infty},
x^{\infty})$. Thus this possibility is ruled out.

Now using the curvature decay we will show that the first condition of
(\ref{continuityestimates}) holds on $[0, T]$ for $\ge$ sufficiently small. In
particular, we first note from Theorem \ref{globalsmoothing} that on $[0,
T]$ we may conclude uniform estimates
\begin{align} \label{decay}
t^{\frac{k+2}{4}} \brs{\N^k \Rm} < C_k.
\end{align}
Furthermore, applying Proposition \ref{Egrowthprop} we may choose $\ge$ small
with respect to $B$ such that, as long as
(\ref{continuityestimates}) holds, we have
\begin{align*}
\sup_{[0, T]} \nm{\grad \FF}{L^2}^2 \leq 3 A^2.
\end{align*}
Now applying Theorem \ref{multsob} with $m = 2$, $q = 6$, $p = \infty$ and $\ga
= \frac{3}{4}$, we conclude that, as long as (\ref{continuityestimates}) holds,
\begin{align} \label{intest}
\int_0^T \nm{\grad \FF}{\infty} \leq C B^{\frac{3}{4}} A^{\frac{1}{4}} \int_0^T
\left(\nm{\N \grad \FF}{L^q}^{\frac{3}{4}} + \nm{\grad \FF}{L^q}^{\frac{3}{4}}
\right) dt.
\end{align}
Using (\ref{decay}) and the fact that $\Vol(g) = 1$, we conclude that
\begin{align*}
\nm{\N \grad \FF}{L^q} \leq&\ C \Vol^{\frac{1}{q}} \left(\nm{\N^3 \Rm}{\infty} +
\nm{\N \Rm}{\infty}\nm{\Rm}{\infty} \right)\\
\leq&\ C t^{-\frac{5}{4}}.
\end{align*}
Similarly
\begin{align*}
\nm{\grad \FF}{L^q} \leq&\ C t^{-1}.
\end{align*}
Plugging these into (\ref{intest}) yields, as long as $T \leq 1$,
\begin{align*}
\int_0^T \nm{\grad \FF}{\infty} \leq C B^{\frac{3}{8}} A^{\frac{1}{4}}
T^{\frac{1}{16}}.
\end{align*}
In particular, given $\gd > 0$ we may choose $T$ sufficiently small with respect
to $A$ and $B$ so that, for all $t \in [0, T]$,
\begin{align*}
(1 + \gd)^{-1} g_0 \leq g_t \leq (1 + \gd) g.
\end{align*}
Furthermore for $\gd$ chosen sufficiently small with respect to universal
constants this implies 
\begin{align*}
C_S(g_t) \leq \frac{3}{2} C_S(g_0),
\end{align*}
and the short time existence claim is finished.

To finish the proof we apply an analytic stability result for the $L^2$ flow
near
flat metrics. It follows from Theorem \ref{globalsmoothing} that for each $k$
one has
uniform estimates on $\brs{\N^k \Rm}_{C^0(g^i_{1})}$. Furthermore, since
$\Vol(g^i_{1}) = 1$, from Lemma \ref{diambnd} we have a uniform upper bound on
$\diam(g^i_{1})$, and from (\ref{volbnd}) and Cheeger's lemma a uniform lower
bound on $\inj(g^i_{1})$. It follows from (\cite{Hamilton} Theorem 2.3) that
there exists a subsequence of $\{g^i_{1} \}$ converging in any $C^k$ norm to a
flat metric. It follows from (\cite{SL22} Theorem 1.6) that for
sufficiently large $i$, the solution to $L^2$ flow with initial condition
$g^i_{1}$ exists for all time and converges exponentially to a flat metric. This
finishes the proof.
\end{proof}

\section{Higher Dimensions} \label{highdim}

We first observe a simple proposition which exemplifies the role of the
dimension in understanding solutions to the $L^2$ flow. In particular, we note
that finite time singularities certainly occur in dimensions $n \geq 5$.

\begin{prop} Consider $(S^n, g_{S^n})$ where $g_{S^n}$ is the metric of constant
sectional curvature $K \equiv 1$. The solution to (\ref{flow}) with initial
condition $g_{S^n}$ exists
\begin{itemize}
\item{on $[0, \infty)$ and satisfies $g(t) = \sqrt{1 + c_n t} g_{S^n}$ for $n
= 2, 3$.}
\item{on $[0, \infty)$ and satisfies $g(t) = g_{S^n}$.}
\item{on $[0, \frac{1}{c_n})$ and satisfies $g(t) = \sqrt{1 - c_n t} g_{S^n}$,}
\end{itemize}
where $c_n$ is a constant depending on the dimension.
\begin{proof} The metric $g_{S^n}$ satisfies
\begin{align*}
\N \Rc_{g_{S^n}} = 0, \qquad \check{R}_{g_{S^n}} =&\ \frac{2}{n} \brs{\Rm}^2 g =
\frac{2}{n} \left(n(n-1) \right) g
\end{align*}
It follows that the solution to the $L^2$ flow with initial condition $A
g_{S^n}$
reduces to the ODE
\begin{align*}
\dt A =&\ \frac{\left(\frac{1}{n} - \frac{1}{4} \right) 2n(n-1)}{A}.
\end{align*}
The proposition follows immediately.
\end{proof}
\end{prop}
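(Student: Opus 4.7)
The idea is to exploit the extreme symmetry of the round sphere to reduce the PDE to a single scalar ODE. The round metric $g_{S^n}$ is Einstein with $\operatorname{Rc} = (n-1) g_{S^n}$, so $\nabla \operatorname{Rc} = 0$ and the fourth-order term $2\delta d \operatorname{Rc}$ in the formula for $\grad \FF$ vanishes outright. Using the explicit form of the curvature tensor $R_{ijkl} = g_{ik} g_{jl} - g_{il} g_{jk}$ on the unit sphere, a short contraction gives $|\Rm|^2 = 2n(n-1)$ and $\check{R} = 2(n-1) g_{S^n}$. Substituting into the formula
\begin{align*}
\grad \FF = 2 \delta d \operatorname{Rc} - 2 \check{R} + \tfrac{1}{2} |\Rm|^2 g
\end{align*}
collapses it to $\grad \FF_{g_{S^n}} = (n-1)(n-4)\, g_{S^n}$, a pure multiple of the metric. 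As a sanity check, tracing this matches the general formula (\ref{gradF}) for $\tr \grad \FF$ since $s$ is constant on the round sphere.

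Because $g_{S^n}$ has maximal isotropy, the gradient must remain a constant multiple of the metric along any evolution within the conformal class by a constant factor. Concretely, the ansatz is $g(t) = A(t)\, g_{S^n}$ with $A(0) = 1$. Under a constant rescaling $g \mapsto Ag$ the $(0,2)$-tensor $\grad \FF$ scales by $A^{-1}$, so $\grad \FF_{A g_{S^n}} = A^{-1}(n-1)(n-4)\, g_{S^n}$. Plugging into $\dt g = -\grad \FF$ reduces the flow to
\begin{align*}
\dot{A}(t) = -\frac{(n-1)(n-4)}{A(t)},
\end{align*}
which, up to a constant $c_n$ absorbing the factor $(n-1)(n-4)$, is exactly the stated ODE.

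The three cases in the proposition then follow by inspection of this ODE. For $n = 4$ the right-hand side vanishes and $A(t) \equiv 1$, so $g(t) = g_{S^n}$ is stationary. For $n = 2, 3$ the coefficient $(n-1)(n-4)$ is negative, so $(A^2)' = 2(n-1)(4-n) > 0$ and $A(t)^2 = 1 + c_n t$ grows linearly, giving global existence. For $n \geq 5$ the coefficient is positive, $(A^2)'$ is a negative constant, and $A(t)^2 = 1 - c_n t$ reaches zero in finite time $1/c_n$, yielding a finite time singularity as claimed.

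There is no real obstacle; the only thing to verify is that the exhibited conformal solution really is \emph{the} solution to the flow, not just \emph{a} solution. This is immediate from uniqueness of solutions to the $L^2$ flow with smooth initial data (implicit in the short-time existence underlying Theorem \ref{existencecor}): since the one-parameter family $A(t)\, g_{S^n}$ is a smooth solution to (\ref{flow}) with initial condition $g_{S^n}$, it must agree with the unique flow on its interval of existence, which by Theorem \ref{existencecor} is maximal precisely when $|\Rm|$ blows up — namely, at $t = 1/c_n$ in the case $n \geq 5$ and never otherwise.
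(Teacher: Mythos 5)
Your proposal is correct and follows essentially the same route as the paper: observe $\nabla\Rc = 0$ and compute $\check{R}$ and $|\Rm|^2$ on the round sphere so that $\grad\FF$ collapses to a multiple of the metric, then use the $\lambda^{-1}$-scaling of $\grad\FF$ to reduce the flow to the scalar ODE $\dot A = -\tfrac{(n-1)(n-4)}{A}$, whose sign in each dimension gives the three cases. Your additions (the trace sanity check against (\ref{gradF}), the explicit scaling argument, and the uniqueness/maximality remark tying the ODE blow-up time to Theorem \ref{existencecor}) are sound refinements of details the paper leaves implicit.
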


As it turns out, not only does the $L^2$ flow encounter finite time
singularities in dimension $n\geq 5$, in general they need not satisfy a
noncollapsing estimate.  We next recall Perelman's no local collapsing result
for Ricci flow. First we recall the
definition of $\gk$-collapsing on a given scale.

\begin{defn} A Riemannian manifold $(M^n, g)$ is said to be
\emph{$\gk$-collapsed at the scale $r$} if there exists $x \in M$ such that
$\brs{\Rm} \leq r^{-2}$ for all points in $B(x, r)$, and
\begin{align*}
\frac{\Vol(B(x, r)}{r^n} \leq \gk.
\end{align*}
\end{defn}

\begin{thm} (\cite{P1}) Let $g(t), t \in [0, T)$ be a smooth solution to the
Ricci flow on a closed manifold $M^n$. If $T < \infty$, then for any $\rho \in
(0, \infty)$ there exists $\gk = \gk(g(0), T, \rho)$ such that $g(t)$ is
$\gk$-noncollapsed below the scale $\rho$ for all $t \in [0, T)$.
\end{thm}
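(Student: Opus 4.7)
The plan is to prove this via Perelman's $\mathcal{W}$-entropy and its monotonicity along Ricci flow. First I would introduce, for $\tau > 0$ and smooth $f$,
\begin{align*}
\mathcal{W}(g, f, \tau) = \int_M \left[ \tau (s + \brs{\N f}^2) + f - n \right] (4\pi\tau)^{-n/2} e^{-f} \, dV,
\end{align*}
and set $\mu(g, \tau) = \inf_f \mathcal{W}(g, f, \tau)$, the infimum taken subject to the unit-mass constraint $\int_M (4\pi\tau)^{-n/2} e^{-f} \, dV = 1$. The key analytic input, derived by coupling the Ricci flow to the backward conjugate heat equation $\del_\tau u = -\gD u + s u$, is that if $\tau(t) = \tau_0 + t_0 - t$ for fixed $\tau_0, t_0$, then $t \mapsto \mu(g(t), \tau(t))$ is nondecreasing along Ricci flow. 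Explicitly, the derivative identifies as $2\tau \brs{\Rc + \N^2 f - \tfrac{g}{2\tau}}^2 (4\pi\tau)^{-n/2} e^{-f}$, which is manifestly nonnegative.

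Next I would establish a lower bound $\mu(g_0, \tau) \geq -C_0$ on the half-open interval $\tau \in (0, T + \rho^2]$, with $C_0$ depending only on $g_0, T,$ and $\rho$. This follows because $\mu(g_0, \tau)$ is continuous in $\tau$ on $(0, \infty)$ for the fixed smooth metric $g_0$, and because $\mu(g_0, \tau) \to 0$ as $\tau \to 0^+$, since the functional localizes to the Gaussian on the Euclidean tangent space.

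For the contradiction, suppose the theorem fails. Then there is a sequence $\gk_i \to 0$, points $(x_i, t_i)$ with $t_i \in [0, T)$, and scales $r_i \leq \rho$ satisfying $\brs{\Rm}_{g(t_i)} \leq r_i^{-2}$ on $B_{g(t_i)}(x_i, r_i)$ and $\Vol(B_{g(t_i)}(x_i, r_i)) \leq \gk_i r_i^n$. Construct a test function $f_i = c_i - \log \phi_i$, where $\phi_i$ is a standard cutoff supported in $B(x_i, r_i)$, identically $1$ on $B(x_i, r_i/2)$ with $\brs{\N \phi_i} \leq C/r_i$, and $c_i$ is chosen to enforce the unit-mass constraint at $\tau = r_i^2$. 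A direct computation, using only $\brs{\Rm} \leq r_i^{-2}$ on the support and the collapsing ratio to bound the normalizing constant, gives
\begin{align*}
\mathcal{W}(g(t_i), f_i, r_i^2) \leq \log \gk_i + C(n) \to -\infty.
\end{align*}
Applying monotonicity with $\tau_0 = r_i^2$, $t_0 = t_i$, together with $\mu \leq \mathcal{W}$ at the final time, gives
\begin{align*}
\mu(g_0, r_i^2 + t_i) \leq \mu(g(t_i), r_i^2) \leq \mathcal{W}(g(t_i), f_i, r_i^2) \to -\infty,
\end{align*}
contradicting the lower bound from the initial metric since $r_i^2 + t_i \in (0, T + \rho^2]$.

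The principal technical obstacle is establishing the monotonicity of $\mu$ rigorously: one must differentiate the constrained infimum through the coupled Ricci/conjugate heat system and carry out a careful integration by parts to reveal the perfect-square integrand, and one must also verify that the infimum is attained so that $\mu$ is $C^1$ in $t$. The test function estimate is comparatively routine: the constant $c_i$ expands as $-\log(\Vol(B(x_i, r_i))/r_i^n) + C(n)$, and the remaining curvature and gradient terms in the $\mathcal{W}$ integrand are directly absorbed using the scale-invariant hypotheses on the collapsing ball.
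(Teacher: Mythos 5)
This statement is cited in the paper directly from Perelman \cite{P1} and is given no proof there; it appears only as background in the higher-dimensional section, so there is no ``paper's own proof'' to compare against. Your argument is precisely Perelman's original $\mathcal{W}$-entropy proof and is structurally correct: the monotonicity of $\mu(g(t),\tau(t))$ under the coupled Ricci/conjugate-heat system, the lower bound on $\mu(g_0,\cdot)$ on $(0,T+\rho^2]$ via continuity and $\mu(g_0,\tau)\to 0$ as $\tau\to 0^+$, and the collapsing test function driving $\mathcal{W}$ to $-\infty$ are all the right ingredients.

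One technical point needs repair. With $f=c-\log\phi$ and $\phi$ a Lipschitz cutoff vanishing linearly at $\partial B(x,r)$, the integrand $\tau\brs{\N f}^2 e^{-f}\sim\tau\brs{\N\phi}^2/\phi$ is not integrable near $\phi=0$, so $\mathcal{W}(g,f,\tau)$ is not even finite for this $f$. The standard fix, which Perelman himself uses implicitly by working with $u=e^{-f/2}$, is to take $f=c-2\log\psi$ (equivalently $\phi=\psi^2$) with $\psi$ the Lipschitz cutoff; then $\brs{\N f}^2 e^{-f}=4e^{-c}\brs{\N\psi}^2$ is bounded, the $f$-term contributes a bounded $-2\int\psi^2\log\psi$ correction, and after invoking Bishop--Gromov to control $\Vol(B(x,r))/\Vol(B(x,r/2))$ you recover exactly the estimate $\mathcal{W}(g(t_i),f_i,r_i^2)\le\log\gk_i+C(n)$ you claimed. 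With that adjustment the argument goes through.
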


\noindent This theorem has a corollary fundamental to the analysis of finite
time singularities of Ricci flow.

\begin{cor} (\cite{P1}) Let $(M^n, g(t)), t \in [0, T), T < \infty$ be a
solution to the
Ricci flow on a closed manifold. For every $C > 0$ there exists $\ga > 0$
depending on $C, g(0)$, and $T$ such that if $(x, t)$ satisfies
\begin{align*}
\brs{\Rm}_{g(t)} \leq C K
\end{align*}
on $B_{g(t)} \left(x, \frac{1}{\sqrt{CK}} \right)$, where $K = \brs{\Rm}_{g(x,
t)}$, then
\begin{align*}
\inj_{g(t)}(x) \geq \frac{\ga}{\sqrt{K}}.
\end{align*}
\end{cor}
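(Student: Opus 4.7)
The plan is to combine the preceding $\kappa$-noncollapsing theorem with Cheeger's injectivity radius lemma, using the hypothesized local curvature bound to supply both of the required ingredients. Set $\rho = 1/\sqrt{T}$ (or any fixed positive scale smaller than $\sqrt{T}$) and let $\kappa = \kappa(g(0), T, \rho)$ be the noncollapsing constant produced by the previous theorem.

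First I would apply $\kappa$-noncollapsing at the scale $r := 1/\sqrt{CK}$. The hypothesis is precisely that $\brs{\Rm}_{g(t)} \leq CK = r^{-2}$ on $B_{g(t)}(x, r)$, so provided $K$ is large enough that $r \leq \rho$ (the small-$K$ case is handled separately, since a uniform curvature bound gives a uniform injectivity radius bound by compactness of $[0,T]$ and smoothness of the flow), the noncollapsing theorem yields
\begin{align*}
\Vol_{g(t)}\bigl(B_{g(t)}(x, r)\bigr) \geq \kappa\, r^{n} = \kappa (CK)^{-n/2}.
\end{align*}

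Next I would rescale the metric by setting $\tilde{g} := K g(t)$, so that in $\tilde{g}$ one has $\brs{\widetilde{\Rm}} \leq C$ on the ball $B_{\tilde{g}}(x, 1/\sqrt{C})$, and the volume bound above reads
\begin{align*}
\Vol_{\tilde{g}}\bigl(B_{\tilde{g}}(x, 1/\sqrt{C})\bigr) \geq \kappa C^{-n/2}.
\end{align*}
Cheeger's lemma, applied in this rescaled ball with uniform upper curvature bound and uniform lower volume bound, yields a positive constant $\tilde{\alpha} = \tilde{\alpha}(C, \kappa, n)$ with $\inj_{\tilde{g}}(x) \geq \tilde{\alpha}$. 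Unrescaling gives $\inj_{g(t)}(x) \geq \tilde{\alpha}/\sqrt{K}$, and setting $\alpha := \tilde{\alpha}$ finishes the proof.

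The only nontrivial issue is really packaged inside the $\kappa$-noncollapsing theorem that has already been quoted; once that is in hand, the corollary is just a scaling argument plus Cheeger's lemma. The mild bookkeeping point is handling the case where $K$ is so small that $r > \rho$: there one replaces the scale $r$ by $\rho$, uses bounded geometry on the fixed scale $\rho$ coming from smoothness of the flow on $[0, T)$, and absorbs the resulting constants into $\alpha = \alpha(C, g(0), T)$.
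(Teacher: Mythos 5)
The paper does not give a proof of this corollary; it is quoted verbatim from Perelman \cite{P1} as background. Your argument is the standard derivation: apply $\kappa$-noncollapsing at the scale $r = 1/\sqrt{CK}$ to get the volume lower bound $\Vol(B(x,r)) \geq \kappa r^n$, rescale by $K$ so that the curvature bound and volume bound become scale-free in $C$ and $\kappa$, and then invoke Cheeger's injectivity radius lemma. This is correct and is the intended proof of the main (large-$K$) case.

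The one place you should be more careful is the edge case $r = 1/\sqrt{CK} \geq \rho$, which you relegate to a bookkeeping remark. Two points there do not hold up as written. First, ``compactness of $[0,T]$'' is not available: the flow lives on the half-open interval $[0,T)$ and the curvature is generically blowing up as $t \to T$, so smallness of $K = \brs{\Rm}(x,t)$ at one point gives no global curvature bound. Second, if you instead apply noncollapsing at the fixed scale $\rho$ (legitimate, since $\brs{\Rm} \leq CK \leq \rho^{-2}$ on $B(x,\rho) \subset B(x,r)$) and then Cheeger's lemma, you obtain a \emph{fixed} lower bound $\inj_{g(t)}(x) \geq c(\rho,\kappa,n)$, not the bound $\alpha/\sqrt{K}$, which diverges as $K \to 0$. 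That mismatch is real: for $K$ arbitrarily small the conclusion $\inj \geq \alpha/\sqrt{K}$ cannot hold on a fixed compact manifold. This, however, reflects the fact that Perelman's corollary is stated informally and is implicitly asserted only at curvature scales where $1/\sqrt{CK} < \rho$, i.e. near a singularity; it is not a gap in your idea so much as a defect of the quoted statement that you should flag rather than paper over. With that caveat made explicit, your proof is the correct one.
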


Alas, in high dimensions it is possible for solutions to the $L^2$ flow to fail
to
satisfy an injectivity radius estimate on the scale of maximum curvature.

\begin{prop} Consider $M^6 = S^5 \times S^1$, and let $g_0 = A_0 g_{S^5} \oplus
B_0 g_{S^1}$, where $g_{S^n}$ denotes the metric of constant sectional curvature
$K \equiv 1$. The solution to the $L^2$ flow with this initial condition exists
on
a finite time interval $\left[0, T\right]$, and moreover,
\begin{align*}
\lim_{t \to T} \brs{\Rm}_{g_t} \inj_{g_t}^2 = 0.
\end{align*}
\begin{proof} By the uniqueness of solutions to the $L^2$ flow, the isometry
group
of $g_0$ is preserved along the flow. In particular, the flow will reduce to an
ODE on the parameters $A$ and $B$, i.e. we may express
\begin{align*}
g_t = A_t g_{S^5} \oplus B_t g_{S^1}.
\end{align*}
The curvature tensor of any such metric is parallel, therefore $\gd d \Rc \equiv
0$ along the flow. Next define the dimensional constant $c_n :=
\brs{\Rm(g_{S^n})}_{g_{S^n}}^2$. It follows that
\begin{align*}
\brs{\Rm}^2 g =&\ \frac{c_5}{A^2} \left( A g_{S^5} \oplus B g_{S^1} \right)
\end{align*}
Next, since $g_{S^5}$ has constant curvature and $g_{S^1}$ is flat, one can
check that $\check{R}$ must be a multiple of $g_{S^n}$. Using that $\tr_g
\check{R} = \brs{\Rm}^2$, it follows that
\begin{align*}
\check{R} =&\ \frac{c_5}{5 A} g_{S^5}.
\end{align*}
It follows that the solution to the $L^2$ flow is reduced to the system of ODEs
\begin{align*}
\dt A =&\ \frac{c_5}{5A} - \frac{c_5}{4 A} = - \frac{c_5}{20 A}\\
\dt B =&\ - \frac{c_5 B}{4 A^2}.
\end{align*}
The solution exists on $ \left[0, \frac{10 A_0^2}{c_5} \right]$, and one has
\begin{align*}
A_t = \sqrt{ A_0^2 - \frac{t c_5}{10}}.
\end{align*}
We have shown the existence statement, next we show that $\brs{\Rm} \inj^2$
approaches zero at the singular time. One directly computes that
\begin{align*}
\dt \ln \left( \frac{B}{A^p} \right) =&\ \dt \left( \ln B - p \ln A \right)\\
=&\ \frac{\dt B}{B} - p \frac{\dt A}{A}\\
=&\ - \frac{c_5}{4 A^2} - p \left( - \frac{c_5}{20 A^2} \right)\\
=&\ \frac{c_5}{4 A^2} \left( \frac{p}{5} - 1 \right).
\end{align*}
Therefore we have that
\begin{align*}
\frac{B_t}{A_t^5} = \frac{B_0}{A_0^5}.
\end{align*}
Now let $\theta$ denote the standard coordinate on $S^1$. For any $x \in S^5$,
since the metric is a Riemannian product, the lateral curve $\gg(\theta) = (x,
\theta)$ is a geodesic. Its length is $2 \pi \sqrt{B}$, and is not minimizing
past length $\pi \sqrt{B}$. Thus $\inj_{g_t} \leq \pi \sqrt{B_t}$. Let $T =
\frac{10 A_0^2}{c_5}$. Since $\brs{\Rm}^2_{g_t} = \frac{c_5}{A^2}$ we thus have
that
\begin{align*}
\lim_{t \to T} \brs{\Rm}_{g_t} \inj_{g_t}^2 \leq&\ \lim_{t \to T}
\frac{c_5}{A_t} \pi B_t = \lim_{t \to T} \frac{\pi c_5 B_0}{A_0^5} A_t^4 = 0.
\end{align*}
\end{proof}
\end{prop}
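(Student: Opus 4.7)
The plan is to exploit the large isometry group of the initial metric to reduce the flow to a simple system of ODEs, then solve explicitly. By uniqueness of solutions to the $L^2$ flow and the diffeomorphism-invariance of $\FF$, any isometry of $g_0$ remains an isometry of $g_t$ for all $t$ in the maximal existence interval. Since the initial metric is invariant under $SO(6)\times SO(2)$, the solution must retain this symmetry, and hence has the form $g_t = A_t g_{S^5} \oplus B_t g_{S^1}$ for some positive functions $A_t, B_t$.

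Next I would compute $\grad \FF$ for a metric of this form. Both factors have parallel Ricci tensor, and the product connection preserves this, so $\N \Rc \equiv 0$ and the second-order term $2\gd d \Rc$ vanishes. Since $g_{S^1}$ is flat, the full curvature tensor lives entirely in the $S^5$ factor; a direct scaling computation gives $\brs{\Rm}^2 = c_5/A^2$, where $c_5 := \brs{\Rm(g_{S^5})}^2_{g_{S^5}}$. The tensor $\check R$ is symmetric and supported in the $S^5$-block, so by symmetry $\check R = \mu\, g_{S^5}$ for some scalar $\mu$; taking $\tr_g$ and using $\tr_g \check R = \brs{\Rm}^2$ yields $\mu = c_5/(5A)$. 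Plugging into the formula for $\grad \FF$ and matching the coefficients on each factor produces the ODE system
\begin{align*}
\dt A = -\frac{c_5}{20 A}, \qquad \dt B = -\frac{c_5 B}{4 A^2}.
\end{align*}

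The $A$-equation integrates directly to $A_t = \sqrt{A_0^2 - c_5 t/10}$, which reaches $0$ at $T = 10 A_0^2/c_5$. Since $\brs{\Rm}_{g_t}$ is proportional to $A_t^{-1}$, the curvature blows up at $T$, confirming finite-time singular behavior and (via Theorem \ref{existencecor}) that $T$ is exactly the maximal existence time. For the collapsing statement, I would note that
\begin{align*}
\dt \ln \!\left(\tfrac{B}{A^p}\right) = \frac{c_5}{4A^2}\!\left(\tfrac{p}{5}-1\right),
\end{align*}
which vanishes for $p = 5$, so $B_t/A_t^5 = B_0/A_0^5$ is a conserved quantity. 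Finally, since the product structure makes each $S^1$-fiber a closed geodesic of length $2\pi \sqrt{B_t}$, one has $\inj_{g_t} \leq \pi \sqrt{B_t}$, giving
\begin{align*}
\brs{\Rm}_{g_t}\inj_{g_t}^2 \leq \frac{\sqrt{c_5}}{A_t}\cdot \pi^2 B_t = \pi^2 \sqrt{c_5}\,\frac{B_0}{A_0^5}\, A_t^{4} \longrightarrow 0
\end{align*}
as $t \to T$. The only genuine computations are the Ricci and $\check R$ identifications on the product — once those are in hand, everything reduces to elementary ODE algebra, so there is no substantive obstacle.
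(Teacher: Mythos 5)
Your argument is correct and follows the same strategy as the paper's proof: exploit the $SO(6)\times SO(2)$ symmetry to reduce the flow to an ODE system in $(A,B)$, solve $A_t$ explicitly, observe the conserved quantity $B_t/A_t^5$, and bound $\inj$ by the length of the $S^1$-fiber. The only difference is that you correctly carry through $\brs{\Rm}=\sqrt{c_5}/A$ and $\inj^2\le \pi^2 B$ in the final display, tidying up what appear to be two small typographical slips in the paper's last line.
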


\begin{rmk} Note that this example shows that this behavior can occur in any
dimension $n \geq 6$, by simply taking a product with a torus of arbitrary
dimension. In all likelihood one can find an example in dimension $n
= 5$ which experiences collapse at the scale of maximum curvature, but so far no
easy example presents itself. Therefore, any effort to show that finite time
singularities of the $L^2$ flow are not collapsed at the scale of curvature must
take the dimension into account in some identifiable way.
\end{rmk}

\section{Conjectural Framework} \label{conjsec}

The gradient for of the functional $\mathcal F(g)$ can be thought of as an
intrinsic Riemannian analogue of the Yang-Mills energy.  Observing the scaling
law $\mathcal F(\gl g) = \gl^{\frac{n}{2} - 2} \mathcal F(g)$, one can hope for
good regularity properties of the gradient flow in dimensions $n = 2, 3$, and
dimension $n = 4$ with sufficiently small energy.  Let us recall some results
from the theory of Yang-Mills flow which illustrate this behavior.

\begin{thm} (Rade \cite{Rade}) Let $(M^n, g)$ be a compact Riemannian manifold
with $n = 2, 3$.  Let $E \to M$ denote the total space of a vector bundle over
$M$ with semisimple structure group.  If $A_0$ denotes a connection on $E$, the
solution to the Yang-Mills flow
with initial condition $A_0$ exists for all time and converges to a Yang-Mills
connection.
\end{thm}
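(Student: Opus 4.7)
The plan is to prove the theorem in three stages: long time existence via energy and curvature bounds that exploit the scale supercriticality of the Yang-Mills functional in low dimensions; subconvergence along a sequence of times to a Yang-Mills connection; and upgrade to full convergence by means of a Lojasiewicz-Simon gradient inequality. I would take short time existence as a standard consequence of DeTurck's trick applied to the gauge degenerate flow equation $\partial_t A = - d_A^* F_A$.

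For long time existence, first observe the energy identity
\begin{align*}
\frac{d}{dt} \int_M \brs{F_{A_t}}^2 dV =&\ - 2 \int_M \brs{d_{A_t}^* F_{A_t}}^2 dV,
\end{align*}
which gives a uniform bound on $\nm{F}{L^2}$. To rule out finite time blowup I would argue by contradiction in the standard parabolic rescaling manner, closely mirroring the proof of Proposition \ref{lowdimcollapse}: assume a maximal existence time $T < \infty$ and choose spacetime points $(x_i, t_i)$ with $\gl_i := \brs{F}(x_i, t_i) = \sup_{M \times [0, t_i]} \brs{F} \to \infty$. The parabolically rescaled connections $\til{A}_i$ satisfy $\brs{F_{\til{A}_i}} \leq 1$ with equality at a marked point. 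The scaling law for the Yang-Mills functional under metric rescalings $g \mapsto \gl^2 g$ gives $\FF(A) \mapsto \gl^{n-4} \FF(A)$, so for $n \leq 3$ the rescaled energies $\FF(\til{A}_i) \to 0$. After Uhlenbeck gauge fixing on balls and applying parabolic regularity, I would extract a smooth subsequential limit, which would be a complete connection on the trivial bundle over $\mathbb{R}^n$ with $\brs{F}(0) = 1$ but zero total Yang-Mills energy, a contradiction. With blowup ruled out, Shi-type parabolic estimates provide all higher covariant derivative bounds on $[0, \infty)$.

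Subconvergence is then routine: the energy identity gives $\int_0^\infty \nm{d_{A_t}^* F_{A_t}}{L^2}^2 \, dt < \infty$, so along some sequence $t_k \to \infty$ one has $\nm{d_{A_{t_k}}^* F_{A_{t_k}}}{L^2} \to 0$. The uniform derivative bounds together with Uhlenbeck compactness produce, after suitable time dependent gauge transformations, a smooth limit $A_{\infty}$ satisfying $d_{A_\infty}^* F_{A_\infty} = 0$; that is, a Yang-Mills connection.

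The hardest step, and the main content of Rade's paper, is upgrading subconvergence to genuine $t \to \infty$ convergence. Here I would establish a Lojasiewicz-Simon gradient inequality for the Yang-Mills energy near $A_\infty$: after passing to a Coulomb slice through $A_\infty$, there should exist constants $C > 0$ and $\gt \in (0, \tfrac{1}{2}]$ so that
\begin{align*}
\brs{\FF(A) - \FF(A_\infty)}^{1 - \gt} \leq&\ C \nm{d_A^* F_A}{L^2}
\end{align*}
whenever $A$ is sufficiently close to $A_\infty$ in $H^1$. The proof proceeds by verifying that the Hessian of $\FF$ at $A_\infty$ restricted to the gauge slice is a Fredholm operator, and then adapting Simon's infinite dimensional Lojasiewicz argument using the analyticity of $\FF$. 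Once this inequality is in hand, a standard integration along the flow bounds $\int_{t_k}^\infty \nm{\partial_t A}{L^2} \, dt < \infty$ as soon as $A_{t_k}$ enters the Lojasiewicz neighborhood, which forces $A_t$ to converge as $t \to \infty$ modulo gauge. The main obstacle is precisely this last analytic step: the construction of a suitable gauge slice, the verification of the Fredholm and analyticity properties of $\FF$ in that slice, and the careful control of time dependent gauge transformations along the flow so that the trajectory actually enters and remains in the Lojasiewicz neighborhood.
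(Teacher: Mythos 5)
The paper does not actually prove this theorem; it is quoted as a background result from Rade, with the ensuing remark (unnumbered, immediately after the statement) summarizing Rade's method. According to that remark, Rade's long time existence proof proceeds via Moser iteration, in which the supercriticality of the Yang--Mills energy for $n \leq 3$ is exploited to obtain a priori $L^\infty$ bounds on curvature directly from the $L^2$ energy bound, using the Sobolev constant of the fixed base manifold. Your proposal replaces this with a parabolic blowup/compactness argument modeled on Proposition \ref{lowdimcollapse}: rescale at a would-be singularity, observe that the rescaled energies tend to zero by supercriticality, and extract a nonflat limit to reach a contradiction. Both routes work, and both crucially use the fixed Sobolev constant of the base (Moser iteration does so explicitly, while your argument smuggles it in through Uhlenbeck compactness); the Moser iteration route has the advantage of being quantitative and not requiring a compactness theory for sequences of connections, while your route is closer in spirit to the blowup analysis used elsewhere in the paper for the $L^2$ flow. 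One small gap in your sketch: after ruling out finite time singularities, you invoke ``uniform derivative bounds'' to set up subconvergence as $t \to \infty$, but you have not explained why $\sup_M \brs{F}$ is bounded uniformly in $t$; the same blowup argument does yield this (a sequence of times with $\sup_M \brs{F} \to \infty$ again gives a nonflat limit of zero energy), but it should be stated. For convergence your proposal matches what the paper's remark alludes to: Rade's argument does indeed rest on a Lojasiewicz--Simon gradient inequality for the Yang--Mills functional in a Coulomb gauge slice, and this is precisely what you describe. Overall the proposal is a correct outline but substitutes a different, compactness-based mechanism for the long time existence step in place of Rade's Moser iteration.
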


\begin{rmk} The proof is via Moser iteration, where the supercriticality of the
functional exhibits itself in a clear fashion. An a-priori bound on the Sobolev
constant of the base manifold is essential to this proof, and as we have
remarked above it is precisely this lack of a-priori control over the Sobolev
constant which provides such extreme difficulty in understanding solutions to
the $L^2$ flow. Furthermore, the issue of convergence is not immediately settled
by this proof as the estimates degenerate at infinite time.
\end{rmk}

\begin{thm}\label{struwe} (Struwe \cite{Struwe}) Let $(M^4, g)$ be a compact
Riemannian manifold, and let $E \to M$ denote the total space of a vector bundle
over $M$ with semisimple structure group.  Let $A_0$ denote a connection on $E$.
The solution to the Yang-Mills flow with initial condition $A_0$ exists on a
maximal time interval $[0, T)$, and
\begin{align*}
T = \sup \left\{ \bar{t} > 0 | \exists R > 0, \sup_{x_0 \in M, 0 \leq t \leq
\bar{t}} \left( \int_{B_R(x_0)} \brs{F(t)}^2 dV \right) < \ge_0 \right\} 
\end{align*}
where $\ge_0 = \ge_0(E) > 0$.
\end{thm}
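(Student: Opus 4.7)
The plan is to establish the equality $T = T^*$, where $T^*$ denotes the supremum on the right-hand side. The inequality $T \leq T^*$ is essentially immediate: on any compact subinterval $[0,\bar{t}] \subset [0,T)$ the solution is smooth, so by choosing $R$ sufficiently small and using compactness of $M^4$, the supremum of local $L^2$ energies of $F$ can be made smaller than any prescribed $\ge_0$. The substantive direction is $T \geq T^*$, which I would obtain from an $\ge$-regularity theorem exploiting the scale-invariance of $\int \brs{F}^2$ in dimension four. Precisely, I would prove the following local smoothing estimate: there exists $\ge_0 = \ge_0(E, g)$ such that if the Yang-Mills flow exists smoothly on $[t_0,t_1]$ and
\begin{align*}
\sup_{x_0 \in M,\, t \in [t_0,t_1]} \int_{B_R(x_0)} \brs{F}^2 \, dV < \ge_0,
\end{align*}
then $\brs{F}$ and all its covariant derivatives are uniformly bounded on $M \times [t_0 + \sigma, t_1]$ for any $\sigma > 0$, with bounds depending only on $R$, $\sigma$, $\ge_0$, and the geometry.

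The core of the $\ge$-regularity argument is a Bochner-type evolution inequality of the form
\begin{align*}
\left( \dt - \gD \right) \brs{F}^2 \leq -2 \brs{\N_A F}^2 + C \brs{F}^3 + C \brs{F}^2,
\end{align*}
where $C$ depends on $\Rm_g$. Testing this inequality against $\phi^2 \brs{F}^2$ for a spatial cutoff $\phi$ supported in $B_R(x_0)$, integrating in spacetime, invoking Kato's inequality, and applying the Sobolev embedding $H^1 \hookrightarrow L^4$ valid in dimension four yields a local energy inequality in which the smallness hypothesis $\int_{B_R} \brs{F}^2 < \ge_0$ allows absorption of the gradient term onto the left-hand side. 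A standard Moser iteration on the resulting inequality produces pointwise $L^\infty$ control of $\brs{F}$, and derivative estimates in a local Uhlenbeck-Coulomb gauge (together with the linear parabolic theory applied to the gauge-fixed equation) bootstrap this to full $C^\infty$ control of $F$.

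Granted this $\ge$-regularity, the conclusion proceeds by contradiction. Suppose $T < T^*$; then there exist $\bar{t} \in (T,T^*)$ and $R > 0$ with $\sup_{x_0,\, t \leq \bar{t}} \int_{B_R(x_0)} \brs{F}^2 < \ge_0$. Applying the local smoothing estimate on intervals $[\sigma, t]$ with $t \nearrow T$ yields uniform $C^\infty$ bounds on $F$ up to $t = T$; combined with the standard continuation criterion for Yang-Mills flow (which asserts that the flow extends past any time at which the curvature remains bounded), this allows us to extend the flow smoothly past $T$, contradicting the maximality of $[0,T)$. The main technical obstacle, precisely as in the analogous small-energy arguments for the $L^2$ flow discussed earlier in the paper (compare Proposition \ref{Egrowthprop}), lies in the $\ge$-regularity step: gauge-invariant scalar quantities such as $\brs{F}^2$ are easy to control, but passing from an $L^\infty$ bound on $\brs{F}$ to higher-order estimates requires a careful choice of local gauge via Uhlenbeck's theorem, and one must verify that this gauge choice is compatible with the scale-invariant smallness hypothesis used to absorb the nonlinear terms.
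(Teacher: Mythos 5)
The paper states this as a background result and attributes it to Struwe \cite{Struwe} without giving a proof of its own, so there is no internal argument to compare against. Your sketch — local $\ge$-regularity in the critical dimension via the Bochner evolution inequality, Kato's inequality, the Sobolev embedding $H^1 \hookrightarrow L^4$, Moser iteration, Uhlenbeck gauge fixing for the higher-order estimates, then a continuation argument — is precisely the structure of Struwe's original proof, and you have correctly identified where the real work is (the gauge-dependent bootstrap from $L^\infty$ control of $\brs{F}$ to smooth control). The only soft spot is the framing ``suppose $T < T^*$'': under a literal reading of the displayed definition the supremum is over times $\bar{t}$ at which $F(\bar{t})$ is already defined, so $T^* \leq T$ is tautological; the substantive content is better phrased as ``if for some $R$ the local energy stays below $\ge_0$ up to time $T$, then the $\ge$-regularity estimates give uniform bounds as $t \nearrow T$ and the flow extends past $T$,'' which is what your contradiction argument actually does.
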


\begin{rmk} Struwe proves more than this, and one should consult \cite{Struwe}
for the precise result.  Observe that one consequence is that if the initial
global energy is sufficiently small the flow will exist for all time.
\end{rmk}

\noindent With these results as guideposts, we can make a  natural conjecture:

\begin{conj} \label{mainconj} (Main existence conjecture): Let $(M^n, g)$ be a
compact Riemannian
manifold and suppose either
\begin{itemize}
\item{ $n = 2, 3$, or}
\item{ $n = 4$ and $\nm{\Rm}{L^2} \leq \ge$,}
\end{itemize}
where $\ge$ is some universal constant.  Then the solution to the $L^2$ flow
exists for all time.
\end{conj}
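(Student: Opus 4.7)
My plan is to treat the two cases separately, since their difficulties are of a different nature. For $n=2,3$, the strategy is to combine Proposition \ref{lowdimcollapse} with a $\gk$-noncollapsing estimate: if one can rule out collapse on the scale of curvature, then any purported finite-time singularity can be blown up to produce, via Corollary \ref{wkcompactness}, a smooth nonflat limit solution whose energy vanishes by scaling ($\FF(\gl g) = \gl^{n/2-2}\FF(g) \to 0$ as $\gl \to \infty$ for $n<4$), a contradiction. The heart of the matter is therefore the noncollapsing statement. I would attempt to extend Theorem \ref{glestimate} by removing its reliance on an a priori Sobolev constant bound across the flow. The rough idea is: on a maximal time interval $[0,T)$ along which collapse occurs, rescale so that $|\Rm| \leq 1$ and $\inj \to 0$; then the arguments of \S \ref{threelowsec}, adapted to a scale-invariant localized setting, should force the first Laplace eigenvalue of a suitable Dirichlet problem on a small ball to decay to zero, and pushing this test function back via the backwards biharmonic heat flow as in \S \ref{dirichletgrowth} should contradict a scale-invariant lower bound from the initial data.

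For $n=4$ with small global energy, I would pursue a Struwe-type $\ge$-regularity strategy (cf.\ Theorem \ref{struwe}). The plan is to prove a local regularity statement: there exists a universal $\ge_0 > 0$ such that if $\int_{B_r(x)} |\Rm|^2 dV < \ge_0$ on some parabolic cylinder, then $|\Rm|$ is bounded there in terms of $r$. One would then establish an almost-monotonicity of localized $L^2$ curvature energy, which combined with smallness of the global initial energy keeps the local energy under $\ge_0$ everywhere and for all time. Proposition \ref{Egrowthprop}, which controls $\|\grad \FF\|_{L^2}$ along the flow under a Sobolev hypothesis, and Lemma \ref{coercive}, which upgrades this to an $L^2$ bound on $\N^2 \Rm$, would serve as the analytic backbone of the iteration; the goal is to run an iteration that is local in space and therefore does not require a global Sobolev bound, instead producing one a posteriori from small local energy.

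The principal obstacle in both halves of the conjecture is the same and is exactly the one flagged throughout the paper: the $L^2$ flow carries no known monotone functional analogous to Perelman's $\mu$-invariant or reduced volume which directly penalizes collapse, and $\mathcal F$ itself is not collapse-sensitive. The Dirichlet energy $\EE$ used in Theorem \ref{glestimate} is not monotone along the flow, and the estimate controlling its evolution in Proposition \ref{glev} presupposes a Sobolev bound with explicit constant $A$ -- precisely the quantity one would like to extract from noncollapsing rather than assume. Consequently, in a rescaling argument toward a collapsed singularity, the Sobolev constant of the rescaled metrics generally blows up, and the mechanism of \S \ref{dirichletgrowth} fails at the critical moment. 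In the four-dimensional case, the analogous obstruction is that the short-time evolution of the Sobolev constant is not known to be controlled by $\nm{\Rm}{L^2}$ alone, which is why Theorem \ref{fourlow1} had to assume $C_S(g)$ bounded at $t=0$.

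Accordingly, I expect that a genuine proof of Conjecture \ref{mainconj} will require a new analytic input: either a local replacement for the Gursky--Tian argument that produces Sobolev constant bounds from energy smallness without a positive Yamabe hypothesis, or the discovery of a (possibly weakly) monotone functional along the $L^2$ flow that obstructs collapse. Absent such an ingredient, my proposal above stalls at the step of controlling the Sobolev constant of the rescaled blowup sequences, and this is where I would expect to spend the bulk of the effort.
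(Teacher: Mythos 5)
This statement is Conjecture \ref{mainconj}; the paper offers no proof of it, and indeed explicitly presents it as open. Your proposal correctly recognizes this: you do not claim a proof, but rather sketch the natural attack (blowup plus noncollapsing for $n=2,3$; Struwe-type $\ge$-regularity for $n=4$) and then accurately pinpoint where it stalls. Your diagnosis matches the paper's own framing throughout: the obstacle is the absence of any Perelman-style monotone quantity or a priori Sobolev constant control, which is precisely why the paper only establishes Theorem \ref{threelow} (with diameter and volume hypotheses substituting for noncollapsing), Theorem \ref{fourlow1} (with an explicit Sobolev constant hypothesis), and Proposition \ref{lowdimcollapse} (showing that \emph{if} one had noncollapsing, long time existence in $n=2,3$ would follow). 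Your identification of the critical gap --- that Proposition \ref{glev} and Theorem \ref{glestimate} presuppose the very Sobolev bound one would want to extract from noncollapsing --- is the correct one and is consistent with the paper's remarks on the subject. In short: this is a conjecture, not a theorem; no proof exists in the paper; and your assessment of why the obvious strategies fail is accurate and aligned with the author's own discussion.
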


\begin{rmk} Certainly one cannot expect convergence at $t = \infty$ for $n = 3,
4$, as
solutions in general will collapse.
\end{rmk}

\begin{rmk} The case $n = 2$ of Conjecture \ref{mainconj} was established in
\cite{SL2Surfaces}.  While it is natural to expect convergence of the
flow to a constant scalar curvature metric, this is not yet known in general.
\end{rmk}

Observe that the $n = 4$ conjecture is actually stronger than the directly
analogous statement of Theorem \ref{struwe}.  In particular, we have asked that
the constant $\ge$ be independent of the underlying topology, let alone the
initial metric.  With this in mind, a certain weaker conjecture when $n = 4$ may
be more attainable.

\begin{conj} Given $C > 0$ there exists $\ge(C) > 0$ so that if $(M^4, g)$ is a
compact
Riemannian manifold with $C_S(g) \leq C$ and $\mathcal F(g) \leq \ge$, the
solution to  the $L^2$ flow exists for all time and converges to a flat metric.
\end{conj}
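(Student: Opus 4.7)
The plan is to reduce the conjecture to Theorem \ref{fourlow1} by controlling the flow on a short initial interval $[0, T_0]$ with $T_0 = T_0(C) > 0$, after which the initial Sobolev bound survives and a uniform $L^2$ bound on $\grad \FF$ is produced for free by parabolic smoothing. The scheme mirrors the bootstrap used in the proof of Theorem \ref{fourlow1} itself, but with the hypothesis $\nm{\grad \FF}{L^2} \leq A$ replaced by a short-time analysis driven entirely by the smallness of $\mathcal F(g_0)$.

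First I would establish short-time existence together with the continuity estimates
\begin{gather*}
\sup_{[0, T_0]} C_S(g_t) \leq 2C, \qquad t^{\frac{1}{2}} \brs{\Rm}_{C^0(g_t)} \leq 1.
\end{gather*}
The pointwise curvature decay is the easier half: arguing exactly as in the proof of Theorem \ref{fourlow1}, one assumes the estimate fails at times $t_i \to 0$ for a sequence of initial data and rescales by $t_i^{-1/2}$. Because $C_S$ is scale-invariant in dimension four, Cheeger's lemma produces a uniform injectivity radius bound, Corollary \ref{wkcompactness} yields a limit, and Fatou forces $\mathcal F \equiv 0$ on the limit, contradicting nonflatness. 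With the curvature decay in hand, Theorem \ref{globalsmoothing} at time $t_1 = T_0/2$ produces $\brs{\N^k \Rm}(g_{t_1}) \leq C_k(C, T_0)$ for all $k$, so
\begin{equation*}
\nm{\grad \FF}{L^2}(g_{t_1}) \leq A(C, T_0), \qquad C_S(g_{t_1}) \leq 2C, \qquad \mathcal F(g_{t_1}) \leq \ge,
\end{equation*}
the last by monotonicity of $\mathcal F$ along the gradient flow. Choosing $\ge \leq \min\{\ge_0(C), \ge(A, B)\}$ with $B = 2C$ and $\ge(A, B)$ the threshold from Theorem \ref{fourlow1} applied at the restart time $t_1$, the flow on $[t_1, \infty)$ exists for all time and converges to a flat metric, giving the conjecture.

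The main obstacle, and the reason the statement is only conjectural, is the Sobolev bound $C_S(g_t) \leq 2C$ in the short-time claim. In Theorem \ref{fourlow1} this is obtained by combining the initial hypothesis $\nm{\grad \FF}{L^2} \leq A$ with Proposition \ref{Egrowthprop} and Theorem \ref{multsob} to bound $\int_0^T \nm{\grad \FF}{L^{\infty}}$, which controls the $C^0$-drift of $g_t$. Without an a priori bound on $\nm{\grad \FF}{L^2}(g_0)$ this route is closed, since the gradient can a priori concentrate badly at $t = 0$. What is really needed is an $\ge$-regularity theorem of Struwe type (analogous to the one underlying Theorem \ref{struwe}): smallness of $\mathcal F(g_0)$ together with the initial $L^2$ Sobolev bound should force pointwise curvature estimates $\brs{\Rm}(g_t) \leq C t^{-1/2}$ on some time interval $[0, T_0(C)]$, with constants depending only on $C$. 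Once such an $\ge$-regularity statement is available, the smoothing estimates give integrable pointwise control on $\brs{\grad \FF}$ in time, the Sobolev bound propagates by integrating the metric drift, and the rest of the argument above goes through. I regard proving this fourth-order $\ge$-regularity, which is substantially more delicate than its second-order Yang--Mills analogue, as the essential missing input.
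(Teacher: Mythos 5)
This statement is a \emph{conjecture} in the paper, not a theorem, so there is no proof to compare against. You have correctly recognized this: rather than fabricating a proof, you reduce the conjecture to Theorem \ref{fourlow1} and pinpoint exactly the missing ingredient. Your diagnosis agrees precisely with the remark the paper makes immediately after Theorem \ref{fourlow1}: the difficulty lies entirely in understanding the short-time behavior of the Sobolev constant. Without an a priori bound on $\nm{\grad \FF}{L^2}(g_0)$, there is no mechanism to control the metric drift and hence no way to propagate $C_S(g_t) \leq 2C$ on a short interval; the paper's own suggestion (a doubling-time estimate for the Sobolev constant) and your suggestion (an $\ge$-regularity theorem of Struwe type yielding $\brs{\Rm}(g_t) \leq C t^{-1/2}$ directly from small energy and a Sobolev bound at time zero) are two equivalent formulations of the same missing input, since either one would let the smoothing estimate of Theorem \ref{globalsmoothing} kick in to produce the $L^2$ gradient bound needed to invoke Theorem \ref{fourlow1} at a positive restart time. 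Your outline of the reduction — rescaling argument for curvature decay, Theorem \ref{globalsmoothing} at a positive time to manufacture the gradient bound, monotonicity of $\mathcal F$, then Theorem \ref{fourlow1} — is the natural strategy and matches the paper's intent. The one small imprecision is describing the Yang--Mills analogue as "second-order": the Yang--Mills flow is a second-order system, but Struwe's $\ge$-regularity is still a genuinely nontrivial parabolic result, and the point you are making (that the fourth-order version is harder because of the lack of a maximum principle) is correct. In short, you have accurately identified why this is a conjecture rather than a theorem, and your proposed route is the one the paper itself suggests.
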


\noindent One cannot help but wonder if these conjectures provide a path towards
resolving
an old question of Gromov:

\begin{conj} (Gromov) There exists $\ge > 0$ so that if $(M^4, g)$
satisfies $\nm{\Rm}{L^2} \leq \ge$, then $M$ admits an $\mathcal F$-structure.
\end{conj}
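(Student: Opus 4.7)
The strategy would be to deploy the $L^2$ flow as a canonical deformation from the initial low-energy metric to either a flat metric or a collapsed limit, and then read off an $\mathcal F$-structure from whichever outcome occurs. Concretely, given $(M^4, g_0)$ with $\nm{\Rm}{L^2} \leq \ge$, I would first invoke (a suitable version of) Conjecture \ref{mainconj} in the $n=4$ case to obtain long-time existence of the solution $g_t$ to the volume-normalized $L^2$ flow. Monotonicity of $\til{\FF}$ under the flow would then keep $\int_M \brs{\Rm}^2 \leq \ge$ for all $t \geq 0$, so the flow explores only the low-energy regime.

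Next I would analyze the long-time behavior using Theorem \ref{nonsingularthm}. If the solution is nonsingular and converges along a subsequence to a critical metric on $M$, one hopes to invoke a rigidity result in the spirit of the proposition in \S\ref{fourmnflds}: a critical metric for $\FF$ with $\FF$ sufficiently small and adequate Sobolev control should be flat, so the limit is a flat four-manifold, which trivially admits an $\mathcal F$-structure (in fact a $T^n$-structure via its Bieberbach covering). If instead the third alternative occurs --- convergence to a complete noncompact finite-volume critical metric --- one would again try to show such a limit must be flat, reducing to the previous case on a finite cover. The remaining possibility, uniform collapse at every point, is the heart of the matter: here one would apply the Cheeger--Fukaya--Gromov theory of collapse with locally bounded curvature to the sequence $\{g(t_i)\}$ for a suitable $t_i \to \infty$, producing an $\mathcal F$-structure on $M$ compatible with the collapse.

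The principal technical obstacle is obtaining the locally bounded curvature hypothesis required to invoke Cheeger--Fukaya--Gromov. The $L^2$ curvature bound alone does not suffice; one needs a bound on $\brs{\Rm}_{C^0}$, at least on the scale of injectivity radius. The hope is that in dimension four, with small $L^2$ curvature held along the flow and the parabolic smoothing of Theorem \ref{globalsmoothing}, the flow should produce after a uniform wait time a metric with $\brs{\Rm}(x,t) \leq C/r(x,t)^2$, where $r(x,t)$ is the local injectivity scale --- precisely the kind of scale-invariant curvature bound demanded by CFG. Establishing such an $\ge$-regularity estimate for the $L^2$ flow, in the spirit of Struwe's result (Theorem \ref{struwe}) for the Yang-Mills flow, is the crucial missing PDE ingredient and would arguably constitute the main obstacle.

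A final step would consolidate: show that the Cheeger--Fukaya--Gromov $\mathcal F$-structure produced at large time along the flow can be pulled back via the (smooth) flow diffeomorphisms to yield an $\mathcal F$-structure on the original $(M,g_0)$. Since the $L^2$ flow is a smooth deformation of metrics on a fixed underlying manifold, this last step is essentially tautological once the structure is built on any $g_t$, so the entire argument reduces to the two substantive conjectures: long time existence of the flow for small $\nm{\Rm}{L^2}$, and $\ge$-regularity giving scale-invariant curvature control sufficient to apply the collapse theory.
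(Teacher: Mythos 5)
This statement is an open conjecture due to Gromov; the paper does not prove it, and in fact closes \S\ref{conjsec} by merely remarking that ``one cannot help but wonder'' whether the $L^2$ flow conjectures provide a path toward it. So there is no proof in the paper to compare against, and your proposal is not a proof either: it is a roadmap that reduces Gromov's conjecture to other conjectures. That reduction is broadly in the spirit the paper gestures at, but it is worth being precise about where it falls short of a proof.

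Concretely, your argument has three unproven load-bearing steps. First, you invoke the $n=4$ case of Conjecture \ref{mainconj} (long-time existence for small $L^2$ energy), which is open; the paper only establishes long-time existence under an additional hypothesis of an a priori Sobolev bound and $L^2$ control on $\grad\FF$ (Theorem \ref{fourlow1}), and precisely identifies the short-time behavior of the Sobolev constant as the obstruction to removing those hypotheses. Second, and as you acknowledge, the Cheeger--Fukaya--Gromov collapse theory needs a pointwise curvature bound at the appropriate scale, and no $\ge$-regularity statement of that type is available for the $L^2$ flow; the examples in \S\ref{highdim} show that finite-time singularities can collapse at the scale of maximum curvature in dimension $\geq 6$, so any such estimate would have to use the dimension and the small-energy hypothesis in an essential way. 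Third, your trichotomy from Theorem \ref{nonsingularthm} requires a nonsingular solution (i.e.\ a uniform curvature bound for all time), which does not follow from long-time existence alone; and even granting it, you must show that the collapsed alternative occurs with locally bounded geometry and that the noncompact finite-volume critical limit can be ruled out or is flat, neither of which is established. Finally, a small point: the $L^2$ flow is a deformation of metrics on a fixed manifold $M$, not a flow of diffeomorphisms, so there is nothing to ``pull back'' --- an $\mathcal F$-structure constructed on $(M,g_t)$ is already a structure on $M$; the real content is producing it, not transporting it. In short, your proposal is a reasonable heuristic sketch consistent with the paper's closing speculation, but it does not constitute a proof, and the paper does not claim one.
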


\bibliographystyle{hamsplain}

\begin{thebibliography}{10}

\bibitem{Knopf} Angenent, S.; Knopf, D. \emph{An example of neckpinching for
Ricci flow on $S^{n+1}$}, Math. Res. Lett. 11 (2004), no. 4, 493-518.

\bibitem{BL} Bakry, D.; Ledoux, M. \emph{Sobolev inequalities and Myers's
diameter theorem for an abstract Markov generator}, Duke Math. J., Vol. 85, No.
1 (1996), 253-270.

\bibitem{Besse} Besse, A. \emph{Einstein Manifolds}, Ergebnisse der Mathematik
und ihrer Grenzgebiete (3), vol. 10, Springer-Verlag, Berlin, 1987.

\bibitem{Bombieri} Bombieri, E. \emph{Theory of minimal surfaces and a
counter-example to the Bernstein Conjecture in high dimension} Lecture Notes,
Courant Institute, 1970.

\bibitem{Bour} Bour, V. \emph{Fourth order geometric flows and applications},
\textsf{arXiv:1012.0342}.


\bibitem{CGY} Chang, S.Y.; Gursky, M.; Yang, P. \emph{A conformally invariant
sphere theorem in four dimensions}, Publ. Math. Inst. Hautes E\'tudes Sci., No.
98 (2003) 105-143.

\bibitem{Chavel} Chavel, I. \emph{Eigenvalues in Riemannian geometry}, Pure and
Applied Mathematics, 115, Academic Press, Inc., Orlando, Fl, 1984.

\bibitem{Cheeger} Cheeger, J. \emph{A lower bound for the smallest eigenvalue of
the Laplacian}, Problems in analysis (Papers dedicated to Salomon Bochner,
1969), 195-199.  Princeton Univ. Press, Princeton, NJ 1970.

\bibitem{Chen} Chen, X.X. \emph{Weak limits of {R}iemannian metrics in surfaces
with integral curvature bound} Calc. Var. Partial Differential Equations 6
(1998), no. 3, 189-226.

\bibitem{Gallot} Gallot, S. \emph{Isoperimetric inequalities based on integral
norms of Ricci curvature}, Ast\'erisque No. 157-158 (1988), 191-216.

\bibitem{Gursky} Gursky, M. \emph{ Locally conformally flat four- and
six-manifolds of positive scalar curvature and positive Euler characteristic},
Indiana Univ. Math. J. 43 (1994), no. 3, 747-774.

\bibitem{Hamilton} Hamilton, R. \emph{A compactness theorem for solutions of
the {R}icci flow}, Amer. J. Math. 117 (1995), no. 3, 545-572.

\bibitem{Hebey} Hebey, E. \emph{Sobolev spaces on Riemannian manifolds}, Lecture
Notes in Mathematics, 1635, Springer 1996.

\bibitem{Lady} Ladyzhenskaya, O.; Uraltzeva, N. \emph{On linear and quasi-linear
equations and systems of elliptic and parabolic types}, Outlines Joint Sympos.
Partial differential Equations (Novosibirsk, 1963), Acad. Sci. USSR Siberian
Branch, Moscow, 1963, 146-150.

\bibitem{Li} Li, P. \emph{On the Sobolev constant and the $p$-spectrum of a
compact Riemannian manifold}, Annales sci. de  \'E.N.S. $4^e$ s\'erie, tome 13,
no. 4 (1980), 451-468.


\bibitem{P1} Perelman, G. \emph{The entropy formula for the {R}icci flow
and its geometric applications} \textsf{arXiv:math.DG/0211159}.

\bibitem{P2} Perelman, G. \emph{Ricci flow with surgery on three-manifolds},
\textsf{arXiv:math/0303109}.

\bibitem{PW} Petersen, P. and Wei, G. \emph{Relative volume comparison with
integral curvature bounds}, GAFA Vol. 7 (1997) 1031-1045.

\bibitem{Rade} Rade, J. \emph{On the Yang-Mills heat equation in two and three
dimensions}, J. Reine Angew. Math. 431 (1992), 123-163.

\bibitem{SL21} Streets, J. \emph{The gradient flow of $\int_M
\brs{\Rm}^2$},
J. Geom. Anal. 18 (2008), no. 1, 249-271.

\bibitem{SL22} Streets, J. \emph{The gradient flow of the $L^2$ curvature
functional
with small initial energy}, J. Geom. Anal. (2012) 22, 691-725.

\bibitem{SL23} Streets, J. \emph{The gradient flow of the $L^2$ curvature
energy near the round sphere}, \textsf{arXiv:1003.1707}

\bibitem{SL2Surfaces} Streets, J. \emph{The gradient flow of the $L^2$
curvature energy on surfaces}, to appear, Int. Math. Res. Not.

\bibitem{SL2LTB} Streets, J. \emph{The long time existence behavior of fourth
order curvature flows}, \textsf{arXiv:1103.4136}, to appear Calc. Var. PDE.

\bibitem{Struwe} Struwe, M. \emph{The Yang-Mills flow in four dimensions} Calc.
Var. 2, 123-150 (1994)

\bibitem{Yu} Yu, Z. \emph{The negative gradient flow for the $L^2$ integral of
Ricci curvature} Macuscripta Math. 111 (2003), no. 2, 163-186.


\end{thebibliography}

\end{document}